\numberwithin{equation}{section}
\DeclareFontFamily{OMS}{rsfs}{\skewchar\font'60}
\DeclareFontShape{OMS}{rsfs}{m}{n}{<-5>rsfs5 <5-7>rsfs7 <7->rsfs10 }{}
\DeclareSymbolFont{rsfs}{OMS}{rsfs}{m}{n}
\DeclareSymbolFontAlphabet{\scr}{rsfs}
\newtheorem{Theorem}[equation]{Theorem}
\newtheorem{Lemma}[equation]{Lemma}
\newtheorem{Proposition}[equation]{Proposition}
\newtheorem{Corollary}[equation]{Corollary}
\theoremstyle{definition}
\newtheorem{Definition}[equation]{Definition}
\newtheorem{Remark}[equation]{Remark}
\newcommand{\alg}[2]{#2[#1]} \newcommand{\ainfty}{A_{\infty}}
\newcommand{\ainftyspaces}{\CatOf{$\ainfty$ spaces}}
\newcommand{\A}{\mathbb{A}}
\newcommand{\aA}{\mathcal{A}}
\DeclareMathOperator{\Aut}{Aut}
\DeclareMathOperator{\End}{End}
\newcommand{\CatOf}[1]{(\mbox{#1})}
\DeclareMathOperator*{\colim}{colim}
\newcommand{\eqdef}{\overset{\text{def}}{=}}
\newcommand{\einfty}{E_{\infty}}
\newcommand{\einftyspectra}{\spectra[\einfty]}
\newcommand{\EKMM}{\scr{M}}
\newcommand{\GLsym}{GL_{1}}
\newcommand{\GL}[1]{\GLsym #1}
\newcommand{\glsym}{gl_{1}}
\newcommand{\gl}[1]{\glsym #1}
\newcommand{\gplike}[2]{\alg{#1}{#2}^{\times}}
\DeclareMathOperator{\ho}{ho}
\newcommand{\heq}{\simeq}
\DeclareMathOperator{\Ho}{ho}
\renewcommand{\i}{\infty}
\renewcommand{\smash}{\wedge}
\newcommand{\Lsmash}{\smash^{\mathrm{L}}}
\newcommand{\iso}{\cong}
\newcommand{\linf}{\Omega^{\infty}}
\newcommand{\Lin}{\mathscr{L}}
\renewcommand{\L}{\mathbb{L}}
\newcommand{\monspectra}{\aS}
\newcommand{\plus}{+}
\newcommand{\pt}[1]{#1_{\plus}}
\newcommand{\ptC}{C_{*}}
\newcommand{\ptspace}{\ast}
\newcommand{\ptdspaces}{\spaces_{*}}
\newcommand{\Qeq}{\approx}
\newcommand{\hAut}{\textrm{hAut}}
\newcommand{\Rcf}{R^{\circ}}
\newcommand{\Rmod}{\Mod{R}}
\newcommand{\lRmodules}{\CatOf{right $\linf R$-modules}}
\newcommand{\Rmodules}{\CatOf{right $R$-modules}}
\newcommand{\spaces}{\TT}
\newcommand{\spectra}{\mathscr{S}} 
\newcommand{\aS}{\mathcal{S}}
\newcommand{\sinf}{\Sigma^{\infty}}
\newcommand{\splus}{\sinf_{+}} 
\newcommand{\Smash}{\wedge} 
\newcommand{\Svee}{\mathbb{S}}
\newcommand{\timesL}{\times_{\mathcal{L}}}
\newcommand{\T}{\mathbb{T}}
\newcommand{\universe}[1]{\mathscr{#1}}
\newcommand{\xra}[1]{\xrightarrow{#1}}
\newcommand{\Z}{\mathbb{Z}}
\let\catsymbfont\mathcal
\newcommand{\bL}{{\mathbb{L}}}
\newcommand{\bP}{{\mathbb{P}}}
\newcommand{\bT}{{\mathbb{T}}}
\newcommand{\M}{\mathscr{M}}
\newcommand{\C}{\mathscr{C}}
\newcommand{\TT}{\mathscr{T}}
\newcommand{\aC}{{\catsymbfont{C}}}
\newcommand{\aM}{\EKMM}
\newcommand{\sL}{{\mathcal{L}}}
\newcommand{\htp}{\simeq}
\DeclareMathOperator{\Map}{Map}
\newcommand{\Mod}[1]{{#1}\text{-}\mathrm{mod}}
\DeclareMathOperator{\SingTxt}{Sing}
\begin{document}

\title[Units, orientations, and Thom spectra]{Units of ring spectra, orientations, and Thom spectra via rigid infinite loop space theory}

\author[Ando]{Matthew Ando}
\address{Department of Mathematics \\
The University of Illinois at Urbana-Champaign \\
Urbana IL 61801 \\
USA} \email{mando@math.uiuc.edu}

\author[Blumberg]{Andrew J. Blumberg}
\address{Department of Mathematics \\University of Texas \\
Austin TX 78712}
\email{blumberg@math.utexas.edu}

\author[Gepner]{David Gepner}
\address{Department of Mathematics \\Purdue University \\ West Lafayette IN 47907 }
\email{dgepner@math.purdue.edu}

\author[Hopkins]{Michael J.~Hopkins}
\address{Department of Mathematics \\
Harvard University \\
Cambridge MA 02138}
\email{mjh@math.harvard.edu}

\author[Rezk]{Charles Rezk}

\address{Department of Mathematics \\
The University of Illinois at Urbana-Champaign \\
Urbana IL 61801 \\
USA}
\email{rezk@math.uiuc.edu}

\thanks{Ando was supported by NSF grant DMS-0705233 and DMS-1104746.
  Blumberg was partially supported by an NSF Postdoctoral Research
  Fellowship and by NSF grant DMS-0906105.  Gepner was supported by
  EPSRC grant EP/C52084X/1.  Hopkins was supported by the NSF.  Rezk
  was supported by NSF grant DMS-0505056 and DMS-1006054.}

\begin{abstract}
We extend the theory of Thom spectra and the associated obstruction
theory for orientations in order to support the construction of the
$\einfty$ string orientation of $tmf$, the spectrum of topological
modular forms. Specifically, we show that for an $\einfty$ ring
spectrum $A$, the classical construction of $\gl{A}$, the spectrum of
units, is the right adjoint of the functor
\[
\splus\linf\colon \Ho \CatOf{connective spectra} \rightarrow \Ho
\CatOf{$\einfty$ ring spectra}.
\]
To a map of spectra
\[
   f\colon b \to b\gl{A},
\]
we associate an $\einfty$ $A$-algebra Thom spectrum $Mf$, which admits
an $\einfty$
$A$-algebra map to $R$ if and only if the composition
\[
    b \to b\gl{A} \to b\gl{R}
\]
is null; the classical case developed by \cite{MQRT:ersers} arises
when $A$ is the sphere spectrum.  We develop the analogous theory
for $\ainfty$ ring spectra: if $A$ is an $\ainfty$ ring spectrum, then
to a map of spaces 
\[
     f\colon B \to B\GL{A}
\]
we associate an $A$-module Thom spectrum $Mf,$ which admits an
$R$-orientation if and only if
\[
B \to B\GL{A} \to B\GL{R}
\]
is null.  Our work is based on a new model of the Thom spectrum as a
derived smash product.
\end{abstract}

\maketitle

\tableofcontents

\section{Introduction}
\label{sec:prol-thom-isom}

In a forthcoming paper \cite{AHR:orientation}, three of us (Ando,
Hopkins, Rezk)  construct an $\einfty$ string orientation of $tmf$, the
spectrum of topological modular forms: more precisely, we construct a map
of $\einfty$ ring spectra from the Thom spectrum $MO\langle 8
\rangle$, also known as $MString$, to the spectrum $tmf$, whose value
on homotopy rings refines the Witten genus from $\pi_{*}MString$ to
the ring of integral modular forms for $SL_{2}\Z$.  
As explained by Hopkins in his ICM address \cite{Hopkins:icm2002}, the
argument requires a new formulation of the obstruction theory for
orientations of \cite{MQRT:ersers} in terms of the adjoint
relationship between the units of a commutative ring spectrum and
$\splus \linf$.   A central goal of this paper is to establish this
formulation.

This new picture of the obstruction theory is motivated by a description of the Thom spectrum originally due to the fourth author.  Another purpose of the paper is to study this construction of the Thom
spectrum.  For example, we use it to extend the classical theory by
developing an obstruction theory for orientations of $\ainfty$ ring
spectra.  We also use it to build Thom spectra in situations more
general than stable spherical fibrations; these more general
situations give rise to twisted generalized 
cohomology.  To carry out these extensions we use
certain relatively recently developed ``rigid'' point-set models for
$A_\infty$ (and $E_\infty$) spaces. 

\subsection{Recollection of the discrete case}
\label{sec:recoll-discr-case}

We begin by describing the algebraic model that motivates our approach.
Let $R$ be a discrete ring, and let $G=\GL{R}$.  A bundle of free rank-one
$R$-modules over $X$ is classified by a map $f\colon X \to BG$; let $P\to X$
be the associated principal $G$-bundle.   We'd like to attach an
$R$-module ``Thom spectrum'' $Mf$ to this situation, in such a way
that trivializations of $P$ over $X$ can be understood in terms of $R$-module
maps $Mf\to R$.  

For simplicity, we'll further assume that $X$ is
discrete.  Then $P$ is the $G$-set $P = \coprod_{x\in X} P_{x}$,
and we can form the $R$-module ``algebraic Thom spectrum''
\begin{equation} \label{eq:32}
  Mf =    \Z[P]\otimes_{\Z[G]} R.
\end{equation}
Formation of the tensor product uses the fact that the adjunction 
\[
\xymatrix{ {\Z\colon \CatOf{sets}} \ar@<.3ex>[r] & {\CatOf{abelian groups}}  \ar@<.3ex>[l] }
\]
induces an adjunction 
\[
\xymatrix{ {\Z\colon \CatOf{$G$-sets}} \ar@<.3ex>[r] & {\CatOf{$\Z[G]$-modules},}  \ar@<.3ex>[l] }
\]
so $\Z[P]$ is a $\Z[G]$-module.  Also, $\Z$ restricts to give an adjunction
\begin{equation} \label{eq:29}
\xymatrix{ {\Z\colon \CatOf{groups}} \ar@<.3ex>[r] & {\CatOf{rings}\colon
\GLsym,}  \ar@<.3ex>[l] }
\end{equation}
whose counit is the natural ring homomorphism 
\begin{equation}\label{eq:1}
\Z[G] \to R.
\end{equation}

Using these adjunctions, one checks easily that 
\[
  \CatOf{$R$--modules} (Mf,R) \iso \CatOf{$G$--sets} (P, R),
\]
and with respect to this isomorphism, the set of \emph{orientations} of
$Mf$ is the subset
\[
\xymatrix{ {\CatOf{$R$--modules} (Mf,R)} \ar@{-}[r]^-{\iso} &
{\CatOf{$G$--sets} (P, R)}
\\
{\CatOf{orientations} (Mf,R)} \ar@{-}[r]^-{\iso} \ar@{>->}[u] &
{\CatOf{$G$--sets} (P,G),} \ar@{>->}[u]}
\]
which in turn is isomorphic to the set of trivializations of the principal
$G$-bundle $P\to X$.

\subsection{The space of units and orientations}
\label{sec:space-units-orient}

Our approach to the Thom spectrum functor develops the approach sketched above for a general space $X$ and $\ainfty$ ring spectrum $R$.  Following \cite{MQRT:ersers}, 
when $R$ is an $A_\infty$ ring spectrum in the sense
of~\cite{LMS}, we can define the space of units of $R$ to be the
pullback in the diagram of (unpointed) spaces 
\[
\xymatrix{
\GL{R} \ar[r] \ar[d] & \linf{R} \ar[d] \\
(\pi_{0}R)^{\times} \ar[r] &\pi_{0}R.
}
\]
If $X$ is any space, then
\[
    [X,\GL{R}] = \{f\in R^{0} (\pt{X}) \,|\, \pi_{0}f (X) \subset
(\pi_{0}R)^{\times}\} = R^{0} (\pt{X})^{\times},
\]
which provides a justification for the definition.  More conceptually,
we show in Section~\ref{sec:space-units-derived} that this definition
of units can be interpreted as the space of automorphisms of $R$ (as an
$R$-module).

Working with the models of~\cite{LMS}, we have continuous (i.e.,
topologically enriched) adjunctions (analogous to \eqref{eq:29})
\begin{equation}\label{eq:15}
\xymatrix{{\CatOf{group-like $A_{\infty}$ spaces}} \ar@<.5ex>[r] &
{\ainftyspaces} \ar@<.5ex>[r]^-{\splus} \ar@<.5ex>[l]^-{\GLsym} &
{\CatOf{$\ainfty$ ring spectra}\colon \GLsym,} \ar@<.5ex>[l]^-{\linf} }
\end{equation}
where the right-hand adjunction is a special case of
\cite[p. 366]{LMS}.  Thus one can make sense of a map of $\ainfty$
ring spectra  $\splus \GL{R} \to R$ analogous to \eqref{eq:1}.

However, classical technology does not make it straightforward to describe the adjunction
\[
\xymatrix{ {\splus\colon \lRmodules} \ar@<.5ex>[r] & {\Rmodules\colon \linf}
\ar@<.5ex>[l] }
\]
and moreover, since $\GL{R}$ is not a topological group or monoid but
rather only a group-like $\ainfty$ space, it is not immediately
apparent how to form the (quasi)fibration
\[
    \GL{R} \to E\GL{R} \to B\GL{R},
\]
and then make sense of the construction \eqref{eq:32}.

Our strategy, which we carry out in \S\ref{sec:ainfty-thom-spectrum}, is
to use a ``rigid'' model of $A_\infty$ spaces.  Specifically, we use a
model of spaces equipped with a symmetric monoidal product such that
strict monoids for this product are precisely
$\ainfty$-spaces~\cite{Blumberg-Cohen-Schlichtkrull}.

In this setting, we can form a version of $\GL{R}$ which is a
group-like monoid, and then model $E\GL{R} \to B\GL{R}$ as a
quasi-fibration with an action of $\GL{R}$.  Given a map
\[
f\colon B\to B\GL{R},
\]
$\GL{R}$ acts on the pullback $P$ in the diagram
\[
\xymatrix{
P \ar[r] \ar[d] & E\GL{R} \ar[d] \\
B \ar[r]^-{f} & B\GL{R},
}
\]
and the spectrum $\splus P$ becomes a right $\splus
\GL{R}$-module.  We can then imitate \eqref{eq:32} to form an
$R$-module Thom spectrum as the derived smash product
\[
   Mf \eqdef \splus P \Lsmash_{\splus \GL{R}} R.
\]

With this definition, we find that 
\begin{equation} \label{eq:35}
   \CatOf{right $R$--modules} (Mf, R) \heq \CatOf{right
$\GL{R}$--spaces} (P,\linf R),
\end{equation}
where here (and in the remainder of this subsection) we are referring
to derived mapping spaces.

The space of \emph{orientations} of $Mf$ is the subspace of $R$-module 
maps $Mf \to R$ which correspond to
\[
   \CatOf{right $\GL{R}$--modules} (P,\GL{R}) \subset \CatOf{right
$\GL{R}$--modules} (P,\linf R).
\]
under the weak equivalence \eqref{eq:35}.  That is, we have a homotopy
pullback diagram
\[
\xymatrix{ {\CatOf{orientations} (Mf,R)} \ar[r]^-{\heq} \ar[d] &
{\CatOf{right $\GL{R}$--spaces} (P,\GL{R})} \ar[d]
\\
{\CatOf{right $R$--modules} (Mf, R)} \ar[r]^-{\heq} & {\CatOf{right
$\GL{R}$--modules} (P,\linf R).}  }
\]

We obtain an obstruction-theoretic characterization of the space of
orientations $Mf\to R$ as follows: it is weakly equivalent to the
derived space of lifts in the diagram 
\[
\xymatrix{ {P} \ar[r] \ar[d] & {E\GL{R}} \ar[d]
\\
{B} \ar[r]_-{f} \ar@{-->}[ur] & {B\GL{R}.}  }
\]
We are able to use this to recover the classical picture of an
orientation and also the Thom isomorphism.

Recall that a stable spherical fibration is classified by a map $B \to
BF$, where $F = \colim_V \hAut(S^V)$ (and the colimit is over
finite-dimensional subspaces of $\mathbb{R}^{\infty}$ and inclusions).
The space $BF$ gives a particularly convenient model for $B\GL{S}$.
The generalized construction we study in this paper associates an
$R$-module Thom spectrum $Mf$ to a map $f\colon B \to B\GL{R}$ for any
ring spectrum $R$; $f$ need not classify a stable spherical fibration.

To compare to the classical situation, we suppose that $f$ does arise
from a stable spherical fibration as the composite 
\[
     f\colon B \xra{g} B\GL{S} \xra{B\GL{\iota}} B\GL{R}.
\]
It follows directly from the definition that $Mf \heq Mg \Lsmash R$.

We define an $R$-orientation of $Mg$ to be a map of spectra $Mg \to R$ 
such that the induced map of $R$-modules $Mf \to R$ 
is an orientation as above.  We then can show that the space of
$R$-orientations of $Mg$ is the space of indicated lifts in the diagram
\[
\xymatrix{ {P} \ar[r] \ar[d] & {B (S,R)} \ar[r] \ar[d] & {E\GL{R}}
\ar[d]
\\
{B} \ar[r] \ar@{-->}[ur] & {B\GL{S}} \ar[r] & {B\GL{R}}, }
\]
where $B (S,R)$ is the pullback in the solid diagram.  This
generalizes to the $A_{\infty}$ case the work of May, Quinn, Ray, and 
Tornehave \cite{MR2162361,MQRT:ersers}. 

\begin{Remark}
In the companion paper \cite{ABGHR} we prove that when $g$
classifies a stable spherical fibration, then the spectrum $Mg$
constructed in this paper coincides with the Thom spectrum associated
to $g$ via the theory of~\cite{LMS}.
\end{Remark}

\subsection{The spectrum of units and $\einfty$ orientations}
\label{sec:obstr-theory-einfty}

To see how our constructions work when $R$ is an $\einfty$ ring
spectrum, once again it is illuminating first to consider the discrete
case.  Suppose that $R$ is a commutative ring.  Then $G=\GL{R}$ is an
abelian group, and we can choose a model of $BG$ that is an abelian
group as well.

Now suppose that $X$ is a discrete abelian group, and $f\colon X\to BG$ is
a homomorphism.  Then in the pullback diagram 
\[
\xymatrix{
\ar[d] P \ar[r] & \ar[d] EG \\
X \ar[r]^{f} & BG, \\
}
\]
$P \cong G \times X$ is an abelian group, and so the discrete ``Thom
spectrum''
\[
    Mf = \Z[P]\otimes_{\Z[G]} R \cong R[X]
\]
is a commutative ring: indeed it is the pushout in the diagram of
commutative rings
\[
\xymatrix{
   \Z[G] \ar[d] \ar[r] & R \ar[d] \\
   \Z[P] \ar[r] & Mf,
}
\]
where the homomorphism $\Z[G]\to R$ is the counit of the adjunction 
\[
\xymatrix{ {\Z \colon \CatOf{abelian groups}} \ar@<.3ex>[r] &
{\CatOf{commutative rings} \colon \GLsym,} \ar@<.3ex>[l] }
\]
which is the restriction to abelian groups of the adjunction
\eqref{eq:29}.

Turning to spaces and spectra, the adjunction \eqref{eq:15} restricts
to an adjunction 
\[
\xymatrix{{\CatOf{group-like $E_{\infty}$ spaces}} \ar@<.5ex>[r] &
{\CatOf{$\einfty$ spaces}} \ar@<.5ex>[r]^-{\splus} \ar@<.5ex>[l]^-{\GLsym} &
{\CatOf{$\einfty$ ring spectra}\colon \GLsym.} \ar@<.5ex>[l]^-{\linf} }
\]
In the $\einfty$ case there is the additional classical fact (e.g.,
see~\cite{MR0339152}) that the category of group-like $\einfty$ spaces
is a model for connective spectra: therefore if $R$ is an $\einfty$
ring spectrum then there is a spectrum $\gl{R}$ such that $\GL{R}\heq
\linf \gl{R}.$  Putting all this together, we see that the functor
$\glsym$ participates as the right adjoint in an adjunction 
\begin{equation}  \label{eq:33}
\xymatrix{ {\splus \linf\colon \Ho \CatOf{$(-1)$-connected spectra}}
\ar@<.3ex>[r] & {\Ho \CatOf{$\einfty$ ring spectra}\colon \glsym} \ar@<.3ex>[l] }
\end{equation}
which preserves the homotopy types of derived mapping spaces.

In contrast to the $A_\infty$ setting, this adjunction can be
constructed by assembling results in the literature, particularly work
of May.  However, as we worked through this we found it very useful to
reformulate the statements and proofs in a way which reflects advances
in the state of the art since the original work was done.  In 
Section~\ref{sec:constr-may-quinn}, we give a modern proof of this
adjunction, carefully rederiving and explaining the many classical
results involved. 

Assuming this development, in Section~\ref{sec:thom-spectra} we work
out the theory of $\einfty$ Thom spectra generalizing our new model of
$A_\infty$ Thom spectra and establish results about orientations as used in the construction of the String orientation of $tmf$.  

Let $R$ be an $\einfty$ ring spectrum, and suppose that $b$ is a
spectrum over $b\gl{R} = \Sigma\gl{R}$.  Let $p$ be the homotopy
pullback  
\begin{equation} \label{eq:13}
\xymatrix{ {\gl{R}} \ar@{=}[r] \ar[d] & {\gl{R}} \ar[d]
 \\
{p} \ar[r] \ar[d] & {e\gl{R}\heq \ptspace } \ar[d]
 \\
{b} \ar[r]^-{f} & {b\gl{R}.}  }
\end{equation}

The $E_\infty$ $R$-algebra Thom spectrum $Mf$ of $f\colon b\to
b\gl{R}$ is then defined to be the homotopy pushout in the diagram of
$\einfty$ $R$-algebras
\begin{equation}   \label{eq:6}
\xymatrix{
R \smash \splus \linf  \gl{R} \ar[r] \ar[d] & R \ar[d] \\
R \smash \splus \linf p \ar[r] & Mf,
}
\end{equation}
where the top map is induced from the counit of the adjunction
\eqref{eq:33}.  Since the homotopy pushout of $E_\infty$ ring spectra
coincides with the derived smash product, this generalizes the definition
in the $A_\infty$ setting.

For the obstruction theory, suppose $\varphi\colon R\to A$ is a map of
$\einfty$ ring spectra.  Then we have the solid commutative diagram 
\begin{equation} \label{eq:13A}
\xymatrix{ {\gl{R}} \ar[r] \ar[d] & {\gl{A}} \ar[d]
 \\
{p} \ar[r] \ar[d] \ar@{-->}[ur] & {e\gl{A}\heq \ptspace } \ar[d]
 \\
{b} \ar[r]^-{\tilde{\varphi}\circ f} \ar@{-->}[ur] & {b\gl{A},} }
\end{equation}
where we write $\tilde{\varphi}\colon bgl_1 R\to bgl_1 A$ for the induced
map.

Using the adjunction~\eqref{eq:33}, we prove that there is a homotopy pullback
diagram of derived mapping spaces (where $\spectra$ denotes the
category of spectra)  
\[
\xymatrix{
\CatOf{$\einfty$ $R$-$\mathrm{algebras}$} (Mf,A) \ar[r] \ar[d] &
\ar[d] \Map_{\spectra} (p,\gl{A})\\ 
\{\varphi \} \ar[r] & \Map_{\spectra} (\gl{R},\gl{A}).
}
\]
That is, the space of $R$-algebra maps $M\to A$ is weakly equivalent
to the space of lifts in the diagram \eqref{eq:13A}.

\subsection{Twisted generalized cohomology}

Our $R$-module Thom spectra locate ``twisted generalized cohomology''
in stable homotopy theory; from this point of view $B\GL{R}$
classifies the twists.  Let
\[
    f\colon X \to B\GL{R}
\]
be a map, and let $Mf$ be the asociated $R$-module Thom spectrum.  The
\emph{$f$-twisted $R$-homology} of 
$X$ is 
\[
     R^{f}_{k}X \eqdef \pi_{0}\Map_{\Rmod} (\Sigma^{k}R,Mf) \iso \pi_{k}Mf,
\]
while the \emph{$f$-twisted $R$-cohomology} of $X$ is
\[
    R_{f}^{k}X \eqdef \pi_{0}\Map_{\Rmod} (Mf, \Sigma^{k} R).
\]
If $f$ factors as
\begin{equation}\label{eq:17}
   f\colon X \xra{g} B\GL{S} \xra{i} B\GL{R},
\end{equation}
then we have $Mf \heq (Mg)\Lsmash R$ and so
\begin{align*}
  R_{k}^{f} (X) & = \pi_{k}Mf  \iso \pi_{k} ( Mg\Lsmash R)  = R_{k} Mg \\
  R^{k}_{f} (X) & = \pi_{0}\Map_{\Rmod} (Mf,\Sigma^{k}R) \iso \pi_{0}
  \Map_{\Mod{S}} (Mg,\Sigma^{k}R) \iso R^{k} Mg.
\end{align*}
That is, the $f$-twisted homology and cohomology coincide with the untwisted
$R$-homology and cohomology of the usual Thom spectrum of the
spherical fibration classified by $g$.  Thus the constructions in this
paper exhibit twisted generalized cohomology as the cohomology of a
generalized Thom spectrum.  In general the twists correspond to maps
$X \to B\GL{R};$ the ones which arise from Thom spectra of spherical
fibrations are the ones which factor as in \eqref{eq:17}.  We discuss
the relationship to other approaches to twisted generalized cohomology
in \cite{ABG}.

\subsection{Historical remarks and related work}

In his 1970 MIT notes \cite{MR2162361},\footnote{In the version available at
\url{http://www.maths.ed.ac.uk/~aar/books/gtop.pdf}, see the note on
page 236.} Sullivan introduced the classical obstruction theory for
orientations and suggested that Dold's theory of homotopy functors
\cite{MR0198464} could be used to construct the space $B (S,R)$ of
$R$-oriented spherical fibrations.  He also mentioned that the
technology to construct the delooping $B\GL{R}$ was on its way.  Soon
thereafter, May, Quinn, Ray, and Tornehave in \cite{MQRT:ersers}
constructed the space $B\GL{R}$ in the case that $R$ is an $\einfty$
ring spectrum, and described the associated obstruction theory for
orientations of spherical fibrations.  

Various aspects of the theory of units and Thom spectra have been
revisited by a number of authors as the foundations of stable homotopy
theory have advanced. For example, Schlichtkrull \cite{MR2057776}
studied the units of a symmetric ring spectrum, and May and Sigurdsson
\cite{MR2271789} have studied units and orientations from the perspective of their
categories of parametrized spectra.  Recently May has prepared an
authoritative paper revisiting operad (ring) spaces and operad (ring)
spectra from a modern perspective~\cite{May:rant}, which has
substantial overlap with some of the review of the classical
foundations in Section~\ref{sec:constr-may-quinn}.

\subsection{Acknowledgments}

We thank Peter May  for his many contributions to this subject and for
useful conversations and correspondence.  We are also very grateful to
Mike Mandell for invaluable help with many parts of this project.
We thank Jacob Lurie for helpful conversations and encouragement.  We
thank John Lind for pointing out an error in a previous draft.  Some
of the results in Section~\ref{sec:ainfty-thom-spectrum} are based on
work in the 2005 University of Chicago Ph.D. thesis of the second
author: he would like to particularly thank his advisors, May and
Mandell, for all of their assistance.

\section{The space of units}

\label{sec:space-units-derived}

In this section, we recall the classical definition of $\GL{R}$ and
explain how to use modern categories of spectra to interpret the units
as a model for the derived space of homotopy automorphisms of the ring 
spectrum $R$.  This preliminary work provides necessary foundations
for our analysis of our new construction of the Thom spectrum functor
in Section~\ref{sec:ainfty-thom-spectrum}.  We do not make any
particular claim to novelty in this section; in particular, May and
Sigurdsson provide an excellent discussion of the situation in
\cite[\S 22.2]{MR2271789} (although note that our use of $\End$ and
$\Aut$ is slightly different than theirs), and the conceptual
description we describe is of course implicit in the original
definition in \cite{MQRT:ersers}.

Given an $\ainfty$ or $\einfty$ ring spectrum $R$ in the classical
sense (e.g., see~\cite{LMS}), the classical construction of the
the group-like $\ainfty$ or $\einfty$ space $\GL{R}$ is as follows:

\begin{Definition}\label{def:gl1}
The space $\GL{R}$ is the pullback in the diagram 
\[
\xymatrix{
\GL{R} \ar[r] \ar[d] & \linf R \ar[d] \\
(\pi_{0}R)^{\times} \ar[r] & \pi_{0}R.
}
\]
(Since the right-hand vertical map is a fibration, the pullback
computes the homotopy pullback.)
\end{Definition}

If $X$ is any space, then
\[
    [X,\GL{R}] = \{f\in R^{0} (\pt{X}) \, |\, \pi_{0}f (X) \subset
(\pi_{0}R)^{\times}\} = R^{0} (\pt{X})^{\times},
\]
which provides a justification for this definition.

We now explain how to interpret $GL{R}$ as the space of homotopy automorphisms of $R$ as an $R$-module.  To
begin, we need to work in a modern
category of spectra, in order to have a sensible category of
$R$-modules.  Assume that $\monspectra$ is a suitable symmetric monoidal
topological model category of spectra, and let $R$ be an $S$-algebra,
i.e., a monoid in $\monspectra$.  The category of $R$-modules inherits
a model structure, and by the space of homotopy automorphisms of $R$,
we mean the subspace of the derived mapping space $\Map_{\Rmod} (R,R)$
consisting of weak equivalences.

In order to make this notion homotopically meaningful, we need to
ensure that the mapping space has the right homotopy type.

\begin{Definition}\label{def-9}
If $R'$ is a cofibrant-fibrant $S$-algebra, and $M$ is a
cofibrant-fibrant $R'$-module, then the \emph{space of endomorphisms}
of $M$ is
\[
   \End (M) \eqdef \Map_{\Mod{R'}}(M,M).
\]
This has a product induced by composition, and by definition the
\emph{space of homotopy automorphisms} of $M$ is the subspace of
group-like components: that is, $\Aut (M) = \GL{\End (M)}$ is the
pullback in the diagram   
\[
\xymatrix{
    \Aut (M) \ar[r] \ar[d] & \ar[d]  \End (M) \\ 
(\pi_{0} (\End (M))^{\times}) \ar[r] & \pi_{0} \End (M).
}
\]
Since $M$ is cofibrant and fibrant, we can equivalently define $\Aut
(M)$ to be the subspace of $\End (M)$ consisting of the homotopy
equivalences.
\end{Definition}
 
If $R$ is an arbitrary algebra, then the \emph{derived space of
endomorphisms} of $R$ is the homotopy type  
\[
\End (R) = \End (\Rcf) \eqdef \Map_{\Mod{R'}} (\Rcf,\Rcf),
\]
where $R'$ is a cofibrant-fibrant replacement of $R$ as an algebra,
and $\Rcf$ is a cofibrant-fibrant replacement of $R'$ as a module over
itself.  The \emph{derived space of homotopy automorphisms} of $R$ is the
homotopy type of the subspace 
\[
\Aut (R) = \Aut (\Rcf) \subset \End (\Rcf)
\]
of homotopy equivalences of $\Rcf$.

In analogy with the notation $\GL{R}$, we have elected to use the
notation $\Aut (R)$ for the space of homotopy automorphisms of $\Rcf$,
even though it is not a strict group.  As defined, we have presented
$\Aut (R)$ as a group-like topological or simplicial monoid.  In
practice, it is easier to access this homotopy type if we let $R^{c}$
be a cofibrant replacement of $R'$, and $R^{f}$ a fibrant replacement.
Then we have a weak homotopy equivalence of spaces 
\[
      \End (R) \heq \Map_{\Mod{R'}}(R^{c},R^{f}),
\]
with $\Aut (R)$ equivalent to the subspace of weak equivalences.  

We now compare $\Aut (R)$ to $\GL{R}$, in the
setting of the $S$-modules of \cite{EKMM}.  Let $\spectra$ be the
Lewis-May-Steinberger category of spectra, let $\spectra[\L]$ denote
the category of $\L$-spectra, let $\EKMM_S$ denote the associated
topological model category of $S$-modules, and write $U \colon
\EKMM_S \to \spectra$ for the forgetful functor.

\begin{Proposition} \label{inf-t-pr-Rwe-and-GL}
Let $R$ be a cofibrant $S$-algebra or commutative $S$-algebra in
$\EKMM_{S}$.  Then there are natural zig-zag of equivalences 
\[
\End(R) \htp \Omega^\infty UR \qquad\textrm{and}\qquad \Aut(R) \htp
\GL{R}
\]
and a zig-zag of natural equivalences between the inclusion of derived
mapping spaces  
\[
  \Aut (R) \rightarrow \End (R)
\]
and the inclusion map
\[
  \GL{R} \rightarrow \linf R.
\]
\end{Proposition}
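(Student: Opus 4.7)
The plan is to exploit the fact that $R$ is the free $R$-module on the unit $\iota \colon S \to R$, so that the derived $R$-module mapping space out of $R$ reduces to the derived $S$-module mapping space out of $S$. Concretely, the extension-of-scalars/forgetful adjunction $R \wedge_{S} (-) \dashv U \colon \Mod{R} \to \Mod{S}$ gives a natural isomorphism
\[
\Map_{\Mod{R}}(R, R) \cong \Map_{\Mod{S}}(S, UR).
\]
After choosing a cofibrant replacement $S^{c} \to S$ in $\EKMM_{S}$ and using that $R$ is a cofibrant $S$-algebra (so that its underlying $S$-module has the correct homotopy type for these computations), the right-hand side fits into a zig-zag with $\Omega^{\infty}UR$ via the standard identification of maps out of a cofibrant model of $S$ into a fibrant $S$-module with the underlying infinite loop space, giving the first equivalence $\End(R) \htp \Omega^{\infty}UR$.

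The next step is to verify that this zig-zag respects the multiplicative structure, identifying composition on $\End(R)$ with the product on $\Omega^{\infty}UR$ induced from the ring structure on $R$ (possibly up to opposite ring, which is immaterial for the units). Unwinding the adjunction, an $R$-module endomorphism of $R$ is determined by its value on $\iota$, and composition corresponds on $\pi_{0}$ to multiplication in $\pi_{0}R$. Granting this, a class in $\pi_{0}\End(R)$ is invertible under composition precisely when the corresponding class in $\pi_{0}R$ is a unit. Since $\Aut(R)$ and $\GL{R}$ are defined as the pullbacks of the respective mapping spaces over $(\pi_{0}R)^{\times} \subset \pi_{0}R$, pulling back the equivalence $\End(R) \htp \Omega^{\infty}UR$ along these inclusions produces the compatible zig-zag $\Aut(R) \htp \GL{R}$ fitting into the diagram with the inclusions into $\End(R)$ and $\Omega^{\infty}UR$. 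The commutative case proceeds identically using the symmetric-monoidal version of the free-forgetful adjunction.

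The main obstacle will be organizing the cofibrant-fibrant replacement machinery of EKMM so that the resulting zig-zag is genuinely natural in $R$ as an (commutative) $S$-algebra and preserves the multiplicative structure sharply enough to identify units with units. In particular one must know that the underlying $S$-module of a cofibrant (commutative) $S$-algebra is sufficiently well-behaved that the adjunction-induced map $\Map_{\Mod{S}}(S^{c},UR) \to \Omega^{\infty}UR$ is a weak equivalence of topological monoids; this relies on specific features of the EKMM model category, notably the good behavior of the canonical cofibrant replacement of $S$ and the interaction of the operadic smash product with the forgetful functors involved.
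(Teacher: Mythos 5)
Your proposal is correct and follows essentially the same route as the paper: reduce $\Map_{\Mod{R}}(R,R)$ to maps out of a cofibrant model of $S$ via the free--forgetful adjunction, identify this with $\linf R$ using the EKMM $(\sinf,\linf)$ adjunction and the weak equivalence $R \to F_{\Lin}(S,R)$, and then match units by comparing pullback diagrams over $(\pi_0 R)^\times$. The paper's proof just makes the chain of identifications explicit (taking $R\Smash_S \L\sinf S^0$ as the cofibrant $R$-module replacement and tracing through the $\L$-spectra adjunctions), whereas you correctly flag these as the ``specific features of the EKMM model category'' one must invoke.
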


\begin{proof}
In the model structure on $R$-modules, all objects are fibrant.  Thus,
we can use $R$ for $R^{f}$.  In the notation of \cite{EKMM}, $S \Smash_{\Lin} \L
\sinf S$ is a cofibrant replacement for $S$ as an $S$-module, and
$R\Smash_{S} \L\sinf S$ is a cofibrant replacement for $R$ as an
$R$-module.  So the derived mapping space $\Map_{\EKMM_{R}} (R^{c},R^{f})$ is
given by
\begin{align*}
  \Map_{\EKMM_{R}}(R\Smash_{S}\L \sinf S^{0},R) & \iso \Map_{\EKMM_S} (S\Smash_{S}\L
  \sinf S^{0},R) \\ & \iso \Map_{\alg{\L}{\spectra}} (\L\sinf S^{0},F_{\Lin}
  (S,R)) \\ & \iso \Map_{\spectra} (\sinf S^{0},F_{\Lin} (S,R)) \\ & \iso
  \linf F_{\Lin} (S,R),
\end{align*}
where $F_{\Lin}(-,-)$ denotes the mapping space in $\L$-spectra.

By \cite[\S{I}, Cor 8.7]{EKMM}, the natural map of $\L$-spectra
\[
    R \to F_{\Lin} (S,R)
\]
is a weak equivalence of $\L$-spectra, and so of spectra.
The weak equivalence
\[
   \Map_{\EKMM_{R}} (R\Smash_{S}\L \sinf S^{0},R) \heq \linf R
\]
follows since $\linf$ preserves weak equivalences.  By comparing
pullback diagrams, it is then straightforward to see that the subspace
of $R$-module weak equivalences corresponds to $\GL{R}$.
\end{proof}

The proof of the preceding proposition illustrates how useful it is
that in the Lewis-May-Steinberger and Elmendorf-Kriz-Mandell-May
categories of spectra, an algebra or commutative algebra $R$ is
automatically fibrant as a module over itself, so that $\linf R$
is homotopically meaningful.  In particular, since $\GL{R}$ is
identified as a subspace of $\linf R$, it is evident how to identify
the multiplicative structure on $\GL{R}$.  As we shall 
see in Section~\ref{sec:ainfty-thom-spectrum}, this simplifies our
analysis substantially.

\begin{Remark}
In the setting of a category of diagram spectra $\C$ (e.g., orthogonal
spectra), the situation is somewhat more complicated.  For an associative
$S$-algebra $R$, one can carry out a similar analysis after passing to
a cofibrant-fibrant replacement of $R$ as an $S$-algebra, and the
pullback description of $\GL{R}$ in fact yields a genuine
topological monoid \cite[22.2.3]{MR2271789}.  But the situation for
commutative $S$-algebras in the diagrammatic setting is different.
The model structure on commutative $S$-algebras is lifted from the
positive model structure on (orthogonal) spectra, and in this
model structure the underlying $S$-module of a cofibrant-fibrant
commutative $S$-algebra will not be fibrant; indeed its zero space
will be $S^{0},$ and so    
\[
      \Map_{\C} (S,R) = S^{0} \neq \Map_{\C} (S^{0},R^{f})
      \heq h\End (R).
\]
Of course, one can instead replace the given commutative $S$-algebra
by an associative $S$-algebra instead, but in this case it is
impossible to recover the $\einfty$ structure on $\GL{R}$.  To
describe $\GL{R}$ in this setting requires a different construction;
see \cite{MR2057776} or \cite{Lind} for a description.

The problem that arises above is a manifestation of Lewis's theorem
\cite{MR1124786} about the nature of symmetric monoidal categories of
spectra.  If $S = \sinf S^{0}$ is cofibrant (as it is in diagram
categories of spectra), then the zero space of a cofibrant-fibrant
commutative $S$-algebra must not be homotopically meaningful, as
otherwise we could make a cofibrant-fibrant replacement $S'$ of $S,$
and
\[
    \Map_{\C} (S,S') \heq QS^{0}
\]
would realize $QS^{0}$ as a commutative topological monoid.  On the
other hand, if the zero space of a cofibrant-fibrant commutative
$S$-algebra is homotopically meaningful, then $S$ cannot be cofibrant,
and the $(\sinf,\linf)$ adjunction must take a modified form (as it
does in the setting of Elmendorf-Kriz-Mandell-May spectra).
\end{Remark}

\section{$\ainfty$ Thom spectra and orientations}

\label{sec:ainfty-thom-spectrum}

In this section, we describe a new model of the Thom spectrum functor
and apply it to the study of orientations of $\ainfty$ ring spectra.
The technical foundation of our model is recent work on ``rigid''
models of infinite loop spaces that constructs symmetric monoidal
categories of ``spaces'' such that monoids and commutative monoids
model $A_\infty$ and $E_\infty$ spaces.  There are now several
well-developed categories of rigid spaces, notably $*$-modules, the
space-level analogue of Elmendorf-Mandell-Kriz-May $S$-modules, and
$\catsymbfont{I}$-spaces, the space-level analogue of symmetric
spectra~\cite{Blumberg-Cohen-Schlichtkrull}.

We work with $*$-modules, because the version of the $(\Sigma^\infty,
\Omega^\infty)$ adjunction in this setting is technically felicitous
for dealing with units, as explained in
Section~\ref{sec:space-units-derived}.  The essential strategy is to
adapt the operadic smash product of \cite{MR1361938,EKMM} to the
category of spaces. Specifically, we produce a symmetric monoidal
product on a model of the category $\spaces$ of topological
spaces such that monoids for this product are precisely
$\ainfty$-spaces; in particular, this allows us to work with models of
$\GL{R}$ which are strict monoids for the new product.  The
observation that one could carry out the program of \cite{EKMM} in the
setting of spaces is due to Mike Mandell, and was worked out in the
thesis of the second author \cite{Blumberg-thesis}.  A detailed
presentation of the theory (along with complete proofs) has appeared
in~\cite{Blumberg-Cohen-Schlichtkrull} (and see also~\cite{Lind}).

In order to alleviate the burden on the reader, below we give a very 
streamlined exposition focused on the precise properties we need, with
careful citations.  The results we need that are not in the literature
are proved below.

\subsection{The categories of $\L$-spaces and $*$-modules}

We begin by reviewing the linear isometries operad~\cite[\S
  I.3]{EKMM}.  Fix a countably infinite-dimensional real vector space
$\universe{U}$ topologized as the colimit of its finite-dimensional
subspaces, and let $\Lin(k)$ denote the space of linear isometries
$\universe{U}^{\oplus k} \to \universe{U}$, given the usual function
space topology.  Observe that $\Lin(0)$ is a point and $\Lin(1)$ is a
monoid with unit given by the identity map $\universe{U} \to
\universe{U}$.  Each space $\Lin(k)$ has a free (right) action of
$\Sigma_k$ by permutations and is contractible, and the structure maps
induced from the direct sum of linear isometries make the collection
$\{\Lin(k)\}$ into an $\einfty$ operad.  If we ignore the
permutations, the linear isometries operad is an $A_\infty$ operad.

Let $\spaces$ denote the category of compactly generated weak
Hausdorff spaces.  We define an $\L$-space to be a space with an
action of $\Lin(1)$, and write $\spaces[\L]$ for the category of
$\L$-spaces.  Mimicking the definition of the smash product of
$\bL$-spectra (in the development of Elmendorf-Kriz-Mandell-May), we
have an associative and 
commutative product $X \boxtimes Y$ on the category $\spaces[\L]$~\cite[4.1,4.2]{Blumberg-Cohen-Schlichtkrull} given by the
coequalizer in the diagram  
\[
\xymatrix{ {\Lin (2)\times (\Lin (1)\times \Lin (1)) \times (X \times
Y)} \ar@<.3ex>[rr]^-{\gamma\times 1} \ar@<-.3ex>[rr]_-{1\times \xi} &
& {\Lin (2)\times X \times Y} \ar[r] & {X\timesL Y}  }.
\]
Here $\xi$ denotes the map using the $\L$-algebra structure of $X$ and
$Y,$ and $\gamma$ denotes the operad structure map $\Lin (2)\times
\Lin (1)\times \Lin (1)\to \Lin (2)$.  The left action of $\Lin (1)$
on $\Lin (2)$ induces an action of $\Lin (1)$ on $X\boxtimes Y.$

There is a corresponding internal mapping object 
$F_{\sL}(-,-)$~\cite[4.3,4.4]{Blumberg-Cohen-Schlichtkrull}.
The product is weakly unital, in the sense that for any $\L$-space $X$
there is a natural map $* \boxtimes X \to X$ which is a weak
equivalence~\cite[4.5.,4.6]{Blumberg-Cohen-Schlichtkrull}.

\begin{Theorem}{\cite[4.16]{Blumberg-Cohen-Schlichtkrull}}
The category $\spaces[\L]$ of $\L$-spaces is a topological model category
with weak equivalences the underlying equivalences of spaces.  The
forgetful functor $\spaces[\L] \to \spaces$ is the right adjoint of a Quillen
equivalence.
\end{Theorem}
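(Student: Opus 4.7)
The plan is to identify $\spaces[\L]$ with the category of left actions of the topological monoid $(\Lin(1),\gamma)$ (with $\gamma\colon \Lin(1)\times\Lin(1)\to \Lin(1)$ the monoid product and the identity isometry as unit), and then to lift the standard Quillen model structure on $\spaces$ across the adjunction
\[
F\colon \spaces \rightleftarrows \spaces[\L] \colon U,
\]
where $F(X)=\Lin(1)\times X$ with the regular left action. The first observation that makes everything go smoothly is that $U$ in fact admits a right adjoint as well, namely $X\mapsto \Map(\Lin(1),X)$ with $\Lin(1)$-action via right translation. Consequently $U$ creates all small limits and all small colimits; both are computed in $\spaces$ and subsequently equipped with an induced $\L$-action.

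Next I would apply a standard cofibrant transfer theorem (e.g.\ Hirschhorn's or Schwede--Shipley's): declare a map in $\spaces[\L]$ to be a fibration or weak equivalence exactly when $U$ sends it to one in $\spaces$, and take $F(I)$ and $F(J)$ as generating cofibrations and generating trivial cofibrations, where $I$ and $J$ are the usual generators on $\spaces$. The smallness conditions of the transfer theorem hold because $U$ preserves filtered colimits and the domains of $I,J$ are compact. The critical hypothesis is that every relative $F(J)$-cell complex be a weak equivalence, and this is where the right adjoint to $U$ does the work: because $U$ preserves all colimits, the underlying map of such a cell complex is a transfinite composite of pushouts along maps in $J$ in $\spaces$, hence a trivial cofibration of spaces. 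The topological enrichment of the resulting model structure is then formal, using the $\spaces$-tensoring $Y\otimes K = Y\times K$ on $\spaces[\L]$ (with $\L$-action on the first factor) and the topological pushout-product axiom on $\spaces$.

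For the Quillen equivalence, the key input is that $\Lin(1)$ is contractible. The unit $\eta_X\colon X\to UFX=\Lin(1)\times X$ given by $x\mapsto (1,x)$ is thus a weak equivalence for \emph{every} space $X$. Since all objects of $\spaces$ are fibrant and $U$ creates and reflects weak equivalences, the derived unit is a weak equivalence on every cofibrant $X$, which together with the general Quillen equivalence criterion delivers the result; equivalently, one can observe that the counit $FUY=\Lin(1)\times Y\to Y$ (the action map) is homotopic to the projection via any contraction of $\Lin(1)$, so it is always a weak equivalence.

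The main obstacle is establishing acyclicity of transferred $F(J)$-cells, i.e.\ the last hypothesis of the transfer theorem. The usual strategy here is a path-object argument. The feature that saves us is precisely that $U$ has a right adjoint and hence preserves colimits, so analyzing pushouts of $F(j)$ in $\spaces[\L]$ reduces to analyzing pushouts of $j$ in $\spaces$; without this luxury one would have to unwind the coequalizer defining $\boxtimes$ and argue by hand with the operadic structure, which is the more delicate line of reasoning carried out in \cite{Blumberg-Cohen-Schlichtkrull}.
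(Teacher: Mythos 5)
Your proof is essentially correct and follows the standard transfer-theorem argument that the cited source also uses: identify $\spaces[\L]$ with algebras for the monad $\Lin(1)\times(-)$ on $\spaces$, lift the Quillen model structure along the free--forgetful adjunction, and obtain the Quillen equivalence from contractibility of $\Lin(1)$ (so the unit $X\to \Lin(1)\times X$ is always a weak equivalence), the fact that all objects of $\spaces[\L]$ are fibrant, and the fact that $U$ creates and hence reflects weak equivalences. The observation that $U$ has the cofree right adjoint $\Map(\Lin(1),-)$ --- equivalently, that $\Lin(1)$ is exponentiable in compactly generated weak Hausdorff spaces, so the monad is a left adjoint and $U$ creates all colimits --- is precisely what makes the acyclicity verification elementary, and it matches the mechanism in \cite[4.16]{Blumberg-Cohen-Schlichtkrull}.

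Two small points worth correcting. First, there is a slip in the acyclicity step: the underlying map of a relative $F(J)$-cell complex is a transfinite composite of pushouts along maps $\Lin(1)\times j$ with $j\in J$, not along maps in $J$ itself. This does not break the argument, since each $\Lin(1)\times j$ is a closed inclusion and a deformation retract, hence an acyclic Hurewicz cofibration, and pushouts and sequential colimits of such maps remain weak equivalences; but you should state this accurately rather than asserting the cells lie in $J$. Second, the closing remark that the reference ``unwinds the coequalizer defining $\boxtimes$'' is misplaced for this theorem: the $\boxtimes$-product plays no role in the model structure on $\spaces[\L]$ itself (it enters for $*$-modules in \cite[4.17]{Blumberg-Cohen-Schlichtkrull} and for the monoidal structure), and the lifted model structure on $\L$-spaces is obtained exactly as you describe, not by a more delicate operadic analysis.
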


The product $\boxtimes$ is a version of the cartesian product of
spaces; specifically, for $X$ and $Y$ cofibrant objects of $\T[\L]$
there is a canonical map induced by the universal property of the
product
\[
X \boxtimes Y \to X \times Y
\]
that is a weak equivalence~\cite[4.24]{Blumberg-Cohen-Schlichtkrull}.
This suggests that to study $A_\infty$-spaces one might consider the
category of monoids in $\spaces[\L]$, i.e., the category
$(\spaces[\L])[\bT]$ of  algebras in $\spaces[\L]$ over the associated monad  
\[
\bT X = \bigvee_n \underbrace{X \boxtimes \ldots \boxtimes X}_{n}.
\]
These monoids model $A_\infty$-spaces structured by the linear
isometries operad: 

\begin{Proposition}{\cite[4.8]{Blumberg-Cohen-Schlichtkrull}}
Let $\A$ denote the monad on the category $\spaces$ associated to the
non-symmetric linear isometries operad.  Then the category
$\spaces[\A]$ of $A_\infty$ algebras is equivalent to the category
$(\spaces[\L])[\bT]$.
\end{Proposition}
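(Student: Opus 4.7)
The plan is to show that giving a $\bT$-algebra structure on an $\sL$-space $X$ is the same data as giving an $\A$-algebra structure on the underlying space of $X$, by unpacking the iterated $\boxtimes$ product and matching the axioms operation by operation.

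First, I would compute the iterated product $X^{\boxtimes n}$ for an $\sL$-space $X$. Inductively applying the defining coequalizer for $\boxtimes$, together with the associativity isomorphism (which itself is mediated by the operadic composition on $\Lin$), yields a natural identification
\[
X^{\boxtimes n} \;\cong\; \Lin(n) \times_{\Lin(1)^n} X^{n},
\]
where the right $\Lin(1)^n$-action on $\Lin(n)$ comes from the operad structure map $\gamma \colon \Lin(n)\times \Lin(1)^n \to \Lin(n)$ and the $\Lin(1)^n$-action on $X^n$ is the coordinatewise $\sL$-action. The residual left $\Lin(1)$-action on $\Lin(n)$ endows this balanced product with its $\sL$-structure.

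Second, using this identification, I would unravel what a $\bT$-algebra amounts to. The structure map $\mu_n \colon X^{\boxtimes n} \to X$ in $\spaces[\sL]$ corresponds, by the universal property of the coequalizer, to a $\Lin(1)^n$-equivariant map $\theta_n \colon \Lin(n) \times X^n \to X$; the $\sL$-equivariance of $\mu_n$ amounts to compatibility of $\theta_n$ with the left $\Lin(1)$-action on $\Lin(n)$. The collection $\{\theta_n\}$ is then precisely an action of the non-symmetric linear isometries operad: associativity of the monoid product $\mu$ corresponds, via the operad structure maps $\gamma$, to operadic associativity; the nullary unit $\ptspace = \Lin(0) \to X$ provides the operad unit; and $\theta_1 \colon \Lin(1) \times X \to X$ reproduces the original $\sL$-structure on $X$.

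Third, I would package these correspondences into mutually inverse functors. The functor $\Phi \colon (\spaces[\sL])[\bT] \to \spaces[\A]$ forgets to the underlying space and records the system $\{\theta_n\}$; the functor $\Psi \colon \spaces[\A] \to (\spaces[\sL])[\bT]$ endows an $\A$-algebra $Y$ with its $\sL$-structure via $\theta_1$, uses the higher $\theta_n$ to construct the structure maps $Y^{\boxtimes n} \to Y$ through the identification above, and verifies that these assemble into a $\bT$-algebra. That $\Phi$ and $\Psi$ are strictly inverse reduces to the identifications established in the first two steps.

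The main obstacle will be the first step: carefully tracking the various $\Lin(1)$-actions and verifying that the iterated coequalizer is indeed $\Lin(n)\times_{\Lin(1)^n} X^{n}$. This requires diagrammatic care because the associativity isomorphism of $\boxtimes$ itself depends on the operadic structure of $\Lin$, so one must ensure these two appearances of $\gamma$ are coherently compatible. Once the identification is in hand, the remaining steps amount to routine translation of the monoid axioms into the operad axioms via the universal property of the coequalizer.
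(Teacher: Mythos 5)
Your proposal is correct, and it reconstructs the argument one would find in the cited reference (Proposition 4.8 of Blumberg--Cohen--Schlichtkrull, following the template of \cite[I.5, I.8]{EKMM}); the paper itself only cites the result rather than proving it. The heart of the matter is, as you identify, the natural identification $X^{\boxtimes n} \cong \Lin(n)\times_{\Lin(1)^n} X^n$, which makes the free monoid monad $\bT$ on $\spaces[\L]$ visibly match the free algebra monad for the non-symmetric linear isometries operad after descent along the coequalizer. The one technical point worth naming explicitly in a complete write-up is that the coherence of the associativity isomorphism in your first step is not purely formal: one needs the analogue of Hopkins's lemma \cite[I.5.4]{EKMM} (that $\Lin(2)\times_{\Lin(1)^2}(\Lin(j)\times\Lin(k)) \to \Lin(j+k)$ is a homeomorphism), which uses the infinite-dimensionality of the universe. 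You flag the need for ``diagrammatic care'' but it is really this specific non-formal input that carries the inductive step. You should also remark that the correspondence is compatible with morphisms, so that $\Phi$ and $\Psi$ really do define an equivalence (in fact an isomorphism) of categories; this is routine but needs to be said.
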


\begin{Remark}
As one would hope, commutative monoids in $\spaces[\L]$ are $\einfty$
spaces.  However, since we do not need the commutative theory herein,
we have chosen to omit discussion of it.
\end{Remark}

In order to have a symmetric monoidal category, we restrict to the
unital objects.  We define the category $\aM_*$ of $*$-modules to be
the full subcategory of $\L$-spaces such that the unit map $*
\boxtimes X \to X$ is a
homeomorphism~\cite[4.9]{Blumberg-Cohen-Schlichtkrull}.  When
restricted to $\aM_*$, we will continue to write $\boxtimes$ for the
product and $F_{\boxtimes}(-,-)$ for the internal mapping
object $* \boxtimes F_{\sL}(-,-)$.  We then have the following result:

\begin{Theorem}{\cite[4.17]{Blumberg-Cohen-Schlichtkrull}}
The category $\aM_*$ of $*$-modules is a closed symmetric monoidal
topological model category, with product $\boxtimes$, unit $*$,
and internal hom $F_{\boxtimes}(-,-)$.  The weak equivalences are the maps
which are underlying weak equivalences of spaces.  The forgetful
functor $\aM_* \to \spaces$ is the right adjoint of a Quillen equivalence.
\end{Theorem}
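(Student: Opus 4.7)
The plan is to follow the blueprint of \cite{EKMM}, where an analogous passage is made from $\sL$-spectra to $S$-modules, transplanted to the space level. The overall strategy has three pieces: build the closed symmetric monoidal structure on $\aM_*$ from the one on $\spaces[\sL]$; lift a model structure to $\aM_*$ by transfer; and then compare with $\spaces$ using the weak unit map $*\boxtimes X \to X$.

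First I would make precise the adjunction between $\spaces[\sL]$ and $\aM_*$. The inclusion $i\colon \aM_* \hookrightarrow \spaces[\sL]$ has a left adjoint given by unitalization $X \mapsto *\boxtimes X$; this is well defined because $*\boxtimes *$ canonically maps to $*$ and associativity of $\boxtimes$ shows that $*\boxtimes X$ is a $*$-module. Next, for $*$-modules $X$ and $Y$, the $\sL$-space $X\boxtimes Y$ is already a $*$-module, since
\[
*\boxtimes(X\boxtimes Y) \;\iso\; (*\boxtimes X)\boxtimes Y \;=\; X\boxtimes Y
\]
by associativity plus the $*$-module condition on $X$. Hence $\boxtimes$ restricts to $\aM_*$ and inherits associativity and symmetry from $\spaces[\sL]$; the unit axiom is now strict, by the very definition of $\aM_*$. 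Closure is obtained by defining $F_{\boxtimes}(X,Y)\eqdef *\boxtimes F_{\sL}(X,Y)$, which is automatically a $*$-module; the adjunction $\aM_*(X\boxtimes Y,Z)\iso \aM_*(X,F_{\boxtimes}(Y,Z))$ is then deduced from the corresponding adjunction at the $\sL$-space level together with the $(*\boxtimes -,i)$ adjunction.

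For the model structure I would transfer from $\spaces$ along the composite adjunction $\spaces \rightleftarrows \spaces[\sL]\rightleftarrows \aM_*$, declaring weak equivalences and fibrations to be the underlying ones. Generating (acyclic) cofibrations are the images of the standard generators of $\spaces$ under the left adjoint $F\colon \spaces \to \aM_*$. The small object argument applies because all spaces in sight are compactly generated weak Hausdorff and $\boxtimes$ commutes with colimits in each variable. The key compatibility axiom (the pushout–product axiom making $\aM_*$ into a symmetric monoidal model category) reduces, by an adjunction shuffle, to the analogous axiom in $\spaces[\sL]$, which is part of the previously cited theorem. For the Quillen equivalence with $\spaces$, the unit of the adjunction is the composite of the weak unit map $*\boxtimes(-) \to (-)$ on $\sL$-spaces with the Quillen equivalence between $\spaces$ and $\spaces[\sL]$; since the first is a weak equivalence on all $\sL$-spaces by the cited result and the second is already known to be a Quillen equivalence, the composite is too. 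The counit argument is dual: on cofibrant $X\in \aM_*$ the natural map from the derived free $*$-module on the underlying space back to $X$ is an equivalence, using that $F$ takes the CW generators to cell $*$-modules whose underlying spaces are weakly equivalent to the original cells.

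The main obstacle I foresee is verifying the pushout–product axiom together with the monoid/cofibration hypothesis needed for homotopical control of $\boxtimes$, since the cofibrant objects in $\aM_*$ are not cofibrant as spaces and one must carefully track that cell $*$-modules are well-behaved under $\boxtimes$. Concretely, one needs to know that $\boxtimes$ of two cell $*$-modules is again a cell $*$-module, together with the corresponding statement for acyclic cofibrations; this is the space-level analogue of the technical heart of \cite[\S III]{EKMM} and is where the specific combinatorics of the linear isometries operad (the use of $\Lin(2)$ together with its free $\Sigma_2$-action and contractibility) do the essential work.
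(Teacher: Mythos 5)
The paper does not prove this result; it is imported verbatim from \cite[Theorem 4.17]{Blumberg-Cohen-Schlichtkrull}, which is itself the space-level analogue of the EKMM passage from $\sL$-spectra to $S$-modules. Your overall blueprint — restrict $\boxtimes$ and build the internal hom as $*\boxtimes F_{\sL}(-,-)$, transfer the model structure from $\spaces$, then compose Quillen equivalences — is indeed the route that \cite{Blumberg-Cohen-Schlichtkrull} takes, and your observation that $X\boxtimes Y$ is automatically a $*$-module (via associativity and $*\boxtimes X\cong X$) is correct.

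However, there is a genuine error at the hinge of the whole argument: you assert that the inclusion $i\colon\aM_*\hookrightarrow\spaces[\sL]$ has a \emph{left} adjoint $*\boxtimes(-)$. It has a \emph{right} adjoint $*\boxtimes(-)$; the paper says this explicitly (``The inclusion $\aM_*\to\spaces[\sL]$ has a right adjoint given by the functor $*\boxtimes X$''), and one can see it directly: the weak unit map $\lambda_X\colon *\boxtimes X\to X$ is precisely the counit of the adjunction $(i,*\boxtimes(-))$, whereas a unit $X\to *\boxtimes X$ natural in all $\sL$-spaces $X$ simply does not exist. This is not a typographical slip; you invoke the adjunction in the wrong direction throughout. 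In the closure argument you need
\[
\aM_*(X\boxtimes Y,Z)\cong\spaces[\sL](X\boxtimes Y,Z)\cong\spaces[\sL](X,F_{\sL}(Y,Z))\cong\aM_*(X,*\boxtimes F_{\sL}(Y,Z)),
\]
where the last isomorphism uses $(i,*\boxtimes(-))$ with $i$ on the left; with your orientation the chain breaks. More seriously, for the transfer of the model structure and for the Quillen equivalence you compose $\spaces\rightleftarrows\spaces[\sL]\rightleftarrows\aM_*$ and treat $*\boxtimes\sL(-)$ as a left adjoint to the forgetful $U\colon\aM_*\to\spaces$ by composing two ``left adjoints.'' But $*\boxtimes(-)$ is the \emph{right} adjoint of $i$, so your composite is left adjoint followed by right adjoint, which is not automatically an adjoint at all; identifying the genuine right adjoint of $*\boxtimes\sL(-)$ with the forgetful functor requires a separate argument (the space-level analogue of the comparison $N\to F_{\sL}(S,N)$ in \cite[I.8.7]{EKMM}), which your proposal elides. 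Relatedly, you describe $*\boxtimes X\to X$ as controlling ``the unit'' of the Quillen equivalence, when it is the counit of the $(i,*\boxtimes(-))$ adjunction — the same directional confusion manifesting again. Getting the direction right is essential here: it is exactly what makes the forgetful functor a right Quillen functor creating fibrations and weak equivalences, which is the spine of the transfer.
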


All objects in the model structure on $\aM_*$ are
fibrant~\cite[4.18]{Blumberg-Cohen-Schlichtkrull}.  The inclusion
$\aM_* \to \spaces[\L]$ has a right adjoint given by the functor $*
\boxtimes X$.  It is formal that right adjoints on $\spaces[\L]$ can
therefore be lifted to $\aM_*$ by applying this functor.

The monad $\bT$ restricts to $\aM_*$, and the model structure on
$\aM_*$ lifts to a topological model structure on $\aM_*[\bT]$ in
which the weak equivalences and fibrations are determined by the
forgetful functor $\aM_*[\bT] \to
\aM_*$~\cite[4.19]{Blumberg-Cohen-Schlichtkrull}.  

\begin{Lemma}{\cite[4.12]{Blumberg-Cohen-Schlichtkrull}}
Let $M$ be an $A_\infty$ algebra in $\spaces$ over the linear isometries
operad.  Then $* \boxtimes_{\sL} M$ is a monoid in $\aM_*$.
\end{Lemma}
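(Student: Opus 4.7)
The plan is to exhibit the functor $U := * \boxtimes_\sL (-) \colon \spaces[\L] \to \aM_*$ as a lax symmetric monoidal functor, and then invoke the standard categorical fact that such functors carry monoids to monoids. By the preceding proposition, the hypothesis that $M$ is an $A_\infty$ algebra over the linear isometries operad amounts to saying that $M$ carries the structure of a monoid $(M,\mu,\eta)$ in $(\spaces[\L], \boxtimes_\sL)$. Once $U$ is equipped with its lax monoidal structure, the conclusion is automatic.

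First I would verify that the inclusion $\iota \colon \aM_* \hookrightarrow \spaces[\L]$ is strong symmetric monoidal. The unit $*$ lies in $\aM_*$ by construction, and for $X, Y \in \aM_*$ the product $X \boxtimes_\sL Y$ computed in $\spaces[\L]$ is already a $*$-module, since
\[
* \boxtimes_\sL (X \boxtimes_\sL Y) \cong (* \boxtimes_\sL X) \boxtimes_\sL Y \cong X \boxtimes_\sL Y
\]
by associativity of $\boxtimes_\sL$ together with the defining homeomorphism $* \boxtimes_\sL X \cong X$ for $X \in \aM_*$. Because $U$ is the right adjoint of a strong symmetric monoidal functor, doctrinal adjunction furnishes a canonical lax symmetric monoidal structure on $U$. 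Explicitly, the constraint map is the composite
\[
(* \boxtimes_\sL X) \boxtimes_\sL (* \boxtimes_\sL Y) \cong (* \boxtimes_\sL *) \boxtimes_\sL (X \boxtimes_\sL Y) \to * \boxtimes_\sL (X \boxtimes_\sL Y)
\]
obtained from the associativity and commutativity isomorphisms of $\boxtimes_\sL$ together with the weak unit map $* \boxtimes_\sL * \to *$.

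Applying $U$ to $(M, \mu, \eta)$ then produces a monoid $U(M) = * \boxtimes_\sL M$ in $\aM_*$ whose multiplication is the composite of the lax constraint with $U(\mu)$ and whose unit is the composite of the unit map of $U$ with $U(\eta)$. Associativity and unitality of the new monoid follow formally from the monoid axioms for $(\mu,\eta)$ combined with the coherence of the lax symmetric monoidal structure on $U$. The main subtlety is therefore verifying this coherence — essentially checking that the weak unit map $* \boxtimes_\sL * \to *$ interacts correctly with the associativity and commutativity constraints in $\spaces[\L]$. Since these checks take place entirely within the closed symmetric monoidal structure on $\aM_*$ already established, they reduce to straightforward diagram chases rather than substantive new input.
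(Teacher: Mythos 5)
The paper imports this statement by citation to \cite[4.12]{Blumberg-Cohen-Schlichtkrull} and gives no in-text proof, so the relevant comparison is to that source, whose argument is essentially the one you describe: $* \boxtimes_{\sL}(-)$ is a (lax) symmetric monoidal functor, and monoidal functors carry monoids to monoids. Your proof is correct and the doctrinal-adjunction packaging of it is clean. One imprecision in your last paragraph is worth flagging: the coherence you defer there --- compatibility of the weak unit map $* \boxtimes_{\sL} X \to X$ with the associativity and commutativity isomorphisms --- does \emph{not} take place within the closed symmetric monoidal structure on $\aM_*$ as you claim. It is a coherence property of the weakly unital product $\boxtimes_{\sL}$ on the ambient category $\spaces[\L]$, the space-level analogue of the coherence statements of \cite[\S I.5--8]{EKMM}, and that is the genuine external input your argument rests on; nothing about the (already-constructed) monoidal structure on $\aM_*$ supplies it. Granting that input, the rest is formal. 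A pleasant extra observation: since $*$ is itself a $*$-module, $* \boxtimes_{\sL} * \to *$ is a homeomorphism, so the constraint maps you construct are in fact isomorphisms and $* \boxtimes_{\sL}(-)$ is strong, not merely lax, symmetric monoidal --- though lax is all the conclusion requires.
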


Associated to a monoid $M$ in $\aM_*$ we can consider the category of
modules.  If $G$ is a monoid in $\aM_*$, then a $G$-module is an object
of $\aM_*$ equipped with a map 
\[
    G \boxtimes P \to P
\]
satisfying the usual associativity and unit conditions.  We write
$\aM_G$ for the category of $G$-modules.  

\begin{Theorem}
The category $\aM_G$ is a topological model category in which the
fibrations and weak equivalences are determined by the forgetful
functor $\aM_G \to \aM_*$. 
\end{Theorem}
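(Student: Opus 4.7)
The plan is to apply the standard transfer theorem for cofibrantly generated model categories (Hirschhorn 11.3.2, or Schwede--Shipley) to the free-forgetful adjunction $G \boxtimes (-) : \aM_* \rightleftarrows \aM_G : U$, taking the generating cofibrations of $\aM_G$ to be $I_G = G \boxtimes I$ and the generating acyclic cofibrations to be $J_G = G \boxtimes J$, where $I$ and $J$ are the generating cofibrations and acyclic cofibrations for the model structure on $\aM_*$ from 4.17. If the transfer succeeds, it automatically yields a topological model structure on $\aM_G$ with fibrations and weak equivalences detected by $U$, which is exactly the claim.

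First I would verify the preliminary hypotheses. Bicompleteness of $\aM_G$ follows formally: limits are created by $U$ from $\aM_*$, while colimits exist by the standard reflexive coequalizer construction for categories of algebras over a monad, using that $G \boxtimes (-)$ is a left adjoint in the closed symmetric monoidal structure on $\aM_*$ and so preserves all small colimits in each variable. The requisite smallness of the domains of $I_G$ and $J_G$ follows from the corresponding smallness in $\aM_*$, since $G \boxtimes (-)$ commutes with the relevant filtered colimits. The topological enrichment is inherited from $\aM_*$ through the adjunction in the standard way.

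The main obstacle, and the step that mirrors the hardest part of Blumberg--Cohen--Schlichtkrull 4.19 for $\bT$-algebras, is verifying that every relative $J_G$-cell complex is a weak equivalence when regarded in $\aM_*$. This is the monoid axiom of Schwede--Shipley for the monoidal model category $\aM_*$: for each $j \in J$ and each $X \in \aM_*$ one must check that $X \boxtimes (G \boxtimes j)$ is a weak equivalence, and that this class of maps is closed under pushouts and transfinite composition. Using the natural comparison map $X \boxtimes Y \to X \times Y$, which is a weak equivalence on sufficiently cofibrant objects by 4.24, together with the fact that the generating acyclic cofibrations of compactly generated spaces are inclusions of deformation retracts whose products with arbitrary spaces are homotopy equivalences, this axiom reduces to a standard underlying-space computation.

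Once the monoid axiom is in hand, the transfer theorem produces the model structure on $\aM_G$ with the stated properties, and the inheritance of the topological tensoring and cotensoring from $\aM_*$ via $U$ and its left adjoint completes the proof.
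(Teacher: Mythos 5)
Your overall framework is the same as the paper's: regard $\aM_G$ as the category of algebras over the monad $G\boxtimes(-)$ on $\aM_*$ and apply the standard transfer (Kan) theorem for cofibrantly generated model structures, with $I_G = G\boxtimes I$, $J_G = G\boxtimes J$, limits created by the forgetful functor, colimits via reflexive coequalizers, and smallness inherited because $G\boxtimes(-)$ is a continuous left adjoint. All of that matches the paper.

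Where you diverge is in the verification of the acyclicity condition that relative $J_G$-cell complexes are underlying weak equivalences. You route this through the monoid axiom of Schwede--Shipley, whereas the paper leans entirely on the fact that \emph{every object of $\aM_*$ is fibrant} (BCS 4.18). Since fibrations in $\aM_G$ are created by $U$, every object of $\aM_G$ is fibrant too, and $\aM_*$ being a topological model category furnishes functorial path objects $X^I$ that lift to $\aM_G$ (cotensoring is a right adjoint, and limits in $\aM_G$ are computed in $\aM_*$). Quillen's path-object argument then immediately yields the acyclicity condition without any reference to the monoidal structure. This is the route the paper gestures at by emphasizing ``all objects fibrant,'' and it is the cleaner argument here.

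Your monoid-axiom route has a genuine gap as written. The monoid axiom requires that for \emph{every} object $X$ of $\aM_*$, not just cofibrant ones, the maps $X\boxtimes j$ (for $j \in J$) generate, under pushout and transfinite composition, only weak equivalences. But the comparison $X\boxtimes Y \to X\times Y$ from BCS 4.24, which you invoke, is only known to be a weak equivalence when both $X$ and $Y$ are cofibrant. For non-cofibrant $X$ the $\boxtimes$-product can fail to have the homotopy type of the cartesian product, so the ``standard underlying-space computation'' you appeal to does not go through without further work. In fact it is not established in BCS (or in this paper) that $(\aM_*, \boxtimes)$ satisfies the pushout-product or monoid axioms, and one should not assume it does. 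The fibrancy-plus-path-object argument sidesteps this entirely and is the intended proof; you should replace the monoid-axiom step with it.
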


\begin{proof}
The proof of this result is analogous to the proof
of~\cite[4.16]{Blumberg-Cohen-Schlichtkrull}.  Specifically, we regard
the category $\aM_G$ as the category of algebras in $\aM_*$ over the
monad $G \boxtimes (-)$.  Since $\aM_*$ is a cofibrantly generated
topological model category with all objects fibrant (and satisfying
suitable smallness hypotheses on the generating cofibrations), we can
apply the standard techniques for lifting model structures to monadic
algebras (e.g., see~\cite[4.15]{Blumberg-Cohen-Schlichtkrull}).
\end{proof}

Let $\aA$ denote either the category $\spaces[\L]$ or $\aM_{*}$.  As
usual, we say that a monoid $M$ in $\aA$ is group-like if 
$\pi_0(M)$ is a group.  Let $(\aA[\bT])^{\times}$ denote the full
subcategory of $\aA[\bT]$ consisting of group-like objects.  Because
an $A_\infty$ space is precisely a monoid in $\spaces[\L]$,
Definition~\ref{def:gl1} can be 
interpreted as a functor
\[
\GL \colon \aM_{*}[\bT] \to (\spaces[\L])[\bT]^{\times}.
\]
Composing with $* \boxtimes (-)$ produces a functor
\[
\GL \colon \aM_*[\bT] \to (\aM_*[\bT])^{\times}
\]
which is the right adjoint to the inclusion $(\aM_*[\bT])^{\times} \to
\aM_*[\bT]$.

Given a monoid $M$, we will be interested in the bar construction.
Thus, we will need to employ geometric realization in $\aM_*$.  Given
a simplicial object $X_\bullet$ in $\aM_*$, there is a natural
homeomorphism $U|X_\bullet| \cong
|UX_\bullet|$~\cite[4.26]{Blumberg-Cohen-Schlichtkrull}, where here
$U$ denotes the forgetful functor to spaces.  As in~\cite[\S
  3.1]{Blumberg-Cohen-Schlichtkrull}, we say that a
simplicial object $X_\bullet$ in $\aM_*$ is good if the degeneracies
are $h$-cofibrations in the following sense: a morphism $X \to Y$ in
$\aA$ is an $h$-cofibration if the map 
%\textbf{is otimes right here, or box otimes?}
\[
(X \boxtimes I) \coprod_X Y \to Y \boxtimes I
\]
has a retract.  In this case, the underlying simplicial space
$UX_\bullet$ is good in the classical sense~\cite[\S A]{Segal}.

Given a monoid $M$ in $\aM_*[\bT]$ and right and left $M$-modules $X$ 
and $Y$, we can define the bar construction as the geometric
realization in $\aM_*$ of the simplicial object with $k$-simplices  
\[
B_k(X,M,Y) = X \boxtimes \underbrace{M \boxtimes M \boxtimes \ldots \boxtimes M}_k \boxtimes Y,
\]
with the usual simplicial structure maps induced by the multiplication
on $M$ and the action maps on $X$ and $Y$.  In particular, we define
\[
B_{\boxtimes}G = |B_\bullet(*,G,*)| \qquad\textrm{and}\qquad E_{\boxtimes}G = |B_\bullet(*,G,G)|.
\]
Notice that $E_{\boxtimes}G$ becomes a $G$-module via the action on
the right and the map $\pi \colon E_{\boxtimes}G \to B_{\boxtimes}G$
becomes a map of $G$-modules when we give $B_{\boxtimes}G$ the trivial
action  
\[
B_{\boxtimes} G \boxtimes G \to B_{\boxtimes} G \boxtimes * \to
B_{\boxtimes} G.
\]
By inspection of the definition of
$\boxtimes$~\cite[4.1]{Blumberg-Cohen-Schlichtkrull}, we see
that the fiber at the basepoint of this map is precisely the
realization of the simplicial object with $k$-simplices $G \boxtimes *
\boxtimes \ldots \boxtimes *$, which is homeomorphic to $G$.  Again
let $\aA$ denote one of the categories $\spaces[\L]$ or $\aM_{*}$.  We
say 
that an object of $\aA[\bT]$ is a well-based monoid in $\aA$ if the
unit map $1_{\aA} \to M$ is an $h$-cofibration.  When $M$ is a
well-based monoid, these simplicial objects are
good~\cite[3.2]{Blumberg-Cohen-Schlichtkrull}.

In order to understand the homotopy type of $B_{\boxtimes} G$, we
recall that we have a continuous strong symmetric monoidal functor $Q
\colon \spaces[\L] \to \spaces$ that is the left adjoint to the
functor which gives a space the trivial
$\L$-action~\cite[4.13,4.14]{Blumberg-Cohen-Schlichtkrull}.  The
functor $Q$ comes equipped with a natural transformation $U \to Q$
which is a weak equivalence when applied to cofibrant objects in
$\spaces[\L]$, $\aM_*$, or
$\aM_*[\T]$~\cite[4.27]{Blumberg-Cohen-Schlichtkrull}.  In fact, we
have the following comparison results:

\begin{Theorem}\label{thm:gcomp}
The functor $Q$ induces a Quillen equivalence between $\aM_*$ and
$\spaces$, and a Quillen equivalence between $\aM_G$ and $QG\spaces$
(where the latter is equipped with the standard model structure
determined by the underlying equivalences).
\end{Theorem}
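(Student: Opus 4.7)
The plan is to establish both Quillen equivalences by leveraging the natural transformation $U \to Q$ (which is a weak equivalence on cofibrant objects) together with the already-established Quillen equivalence given by the forgetful functor $U \colon \aM_* \to \spaces$.

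For the first Quillen equivalence, I would first identify the right adjoint to $Q$. On $\spaces[\L]$, the right adjoint to $Q$ is the trivial-$\L$-action functor $t$; since the inclusion $\aM_* \hookrightarrow \spaces[\L]$ has right adjoint $* \boxtimes (-)$, the composite $R := * \boxtimes t(-)$ is right adjoint to $Q \colon \aM_* \to \spaces$. That $Q \dashv R$ is a Quillen adjunction is then immediate: weak equivalences and fibrations in $\aM_*$ are detected on underlying spaces, and the underlying space of $RY$ is naturally weakly equivalent to $Y$ via the weak equivalence $* \boxtimes Z \to Z$. To upgrade to a Quillen equivalence, I would observe that for cofibrant $X \in \aM_*$ the natural transformation yields a weak equivalence $UX \to QX$, so the left-derived functor $LQ$ is naturally isomorphic on $\Ho \aM_*$ to the already-known equivalence $\Ho U$; hence $LQ$ is an equivalence of homotopy categories, and $Q \dashv R$ is a Quillen equivalence.

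For the second Quillen equivalence, the key structural input is that $Q$ is strong symmetric monoidal, so it carries the monoid $G$ in $\aM_*$ to the topological monoid $QG$ in $\spaces$ and $G$-modules to $QG$-modules. The right adjoint sends a $QG$-space $Y$ to $RY$ equipped with the $G$-action induced from the unit $G \to RQG$ and the lax monoidal structure on $R$ (dual to the strong monoidal structure on $Q$). This is a Quillen adjunction by Part 1, since weak equivalences and fibrations in $\aM_G$ and $QG\spaces$ are detected by the forgetful functors to $\aM_*$ and $\spaces$ and these forgetful functors commute with both $Q$ and $R$. For the Quillen equivalence, the crucial step is showing that the forgetful functor $\aM_G \to \aM_*$ sends cofibrant objects to cofibrant objects. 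Given this, the derived unit $P \to R(QP)$ for a cofibrant $P \in \aM_G$ becomes, after applying the forgetful functor, the derived unit from Part 1 applied to the cofibrant object $UP$, which is a weak equivalence; since weak equivalences in $\aM_G$ are detected in $\aM_*$, Quillen equivalence follows.

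The main obstacle I anticipate is exactly this cofibrancy-preservation step. Cofibrations in the lifted model structure on $\aM_G$ are generated by the images under $G \boxtimes (-)$ of generating cofibrations of $\aM_*$, so the question reduces to showing that $G \boxtimes (-)$ preserves cofibrations. This requires understanding how the monoidal product $\boxtimes$ interacts with cofibrations, and depends on point-set properties of $G$ (cofibrancy or well-basedness). Establishing this compatibility is precisely the sort of issue the rigid $*$-module formalism of \cite{Blumberg-Cohen-Schlichtkrull} is designed to handle, but a careful invocation of their structural results concerning $\boxtimes$ and the lifted model structures will be needed to conclude.
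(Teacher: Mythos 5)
Your strategy is essentially the one the paper uses: treat $Q$ as a left Quillen functor whose derived functor agrees with the already-known forgetful Quillen equivalence via the natural transformation $U \to Q$, and then transport the argument to $G$-modules using that $Q$ is strong symmetric monoidal and the lifted model structures are created by the forgetful functors. So the overall route is correct and matches the paper.

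Two points of comparison. First, your claim that the Quillen adjunction in Part 1 is ``immediate'' from the weak equivalence $* \boxtimes Z \to Z$ is not quite an argument: a natural weak equivalence between $URY$ and $Y$ does not by itself show that $R$ preserves fibrations or trivial fibrations. The paper instead cites the proof of~\cite[4.27]{Blumberg-Cohen-Schlichtkrull}, which shows directly that $Q$ preserves cofibrations and weak equivalences between cofibrant objects, and from this deduces that $Q$ is left Quillen (the generating trivial cofibrations of $\aM_*$ having cofibrant domains). Your argument becomes rigorous once you replace the hand-wave by this citation, or equivalently by noting that $R$ factors as $(* \boxtimes -)\circ t$ and arguing that each factor is right Quillen.

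Second, the ``obstacle'' you flag in Part 2 --- that one needs the forgetful functor $\aM_G \to \aM_*$ to carry cofibrant objects to cofibrant objects, i.e.\ that $G \boxtimes (-)$ preserves cofibrations --- is real, and the paper does not spell it out either; it is absorbed into the phrase ``an analogous elaboration of the argument for~\cite[4.27]{Blumberg-Cohen-Schlichtkrull}.'' This is fine here because in every place the theorem is invoked (e.g.\ Definition~\ref{def-thom-ainfty}), $G$ is a cofibrant monoid $(\GL{R})^c$ in $\aM_*[\bT]$, and for such $G$ the needed compatibility of $\boxtimes$ with cofibrations follows from the structural results in~\cite{Blumberg-Cohen-Schlichtkrull}. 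If you want your proposal to stand on its own, you should either record the standing hypothesis that $G$ is cofibrant (as it always is in the applications) or include the lemma that $G \boxtimes (-)$ preserves cofibrations for cofibrant $G$; as written, you have correctly located the missing lemma but not supplied it.

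Finally, note that your conclusion of the Quillen equivalence in Part 2 via the derived unit is consistent with the paper's, which uses that the right adjoint preserves all weak equivalences so no fibrant replacement is needed, and that $Q$ preserves weak equivalences between cofibrant objects. Combined with the cofibrancy-preservation point above, this gives exactly your argument.
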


\begin{proof}
The proof of~\cite[4.27]{Blumberg-Cohen-Schlichtkrull} shows that the
left adjoint functor $Q \colon \aM_* \to \spaces$ preserves
cofibrations and weak equivalences between cofibrant objects.
Therefore, $Q$ is a left Quillen adjoint.  Since $Q$ is strong
symmetric monoidal, it lifts to a functor $Q \colon \aM_G \to QG\spaces$.
Since the model structure on $\aM_G$ is lifted from $\aM_*$, an
analogous elaboration of the argument
for~\cite[4.27]{Blumberg-Cohen-Schlichtkrull} shows that $Q$ is a
Quillen left adjoint in this setting as well.  Since the right adjoint
preserves all weak equivalences and $Q$ preserves weak equivalences
between cofibrant objects, in each case $Q$ induces a Quillen
equivalence. 
\end{proof}

Since $Q$ is a continuous left adjoint, it commutes with geometric
realization.  As a particular consequence, we see that $QB_{\boxtimes}
G \cong B QG$, where $B$ denotes the usual bar construction for the
topological monoid $QG$.  Since $U B_{\boxtimes} G \to QB_{\boxtimes}
G$ is a weak equivalence, this identifies the bar construction as the
usual one applied to the rectification $QG$.  This comparison allows
us to show the that the map $E_\boxtimes G \to B_\boxtimes G$ is a
quasifibration in the following sense.

\begin{Theorem}\label{t-EL-BL-quasi-fib}
Let $G$ be a group-like cofibrant object in $\aM[\T]$ which is
well-based.  Then the map $UE_{\boxtimes}G \to UB_{\boxtimes}G$ is a
quasifibration of spaces. 
\end{Theorem}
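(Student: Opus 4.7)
The plan is to reduce to the classical quasifibration theorem for well-based group-like topological monoids by passing through the strong symmetric monoidal continuous left adjoint $Q \colon \aM_* \to \spaces$. Since $Q$ commutes with geometric realization and is strong symmetric monoidal, it identifies $QE_\boxtimes G \cong EQG$ and $QB_\boxtimes G \cong BQG$, where $QG$ is the rectification to an honest topological monoid (as observed in the excerpt). I would first verify that the hypotheses on $G$ transfer to $QG$: cofibrancy of $G$ ensures that $UG \to QG$ is a weak equivalence by the comparison~\cite[4.27]{Blumberg-Cohen-Schlichtkrull}, so $QG$ is group-like; and well-basedness transfers because $Q$, as a topological left adjoint, preserves $h$-cofibrations, so the unit $\ptspace \to QG$ is an $h$-cofibration of spaces. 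Thus the classical theorem (May, Segal) applies to show $EQG \to BQG$ is a quasifibration.

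Next I would show that the natural transformation $U \to Q$ induces weak equivalences $UE_\boxtimes G \to EQG$ and $UB_\boxtimes G \to BQG$. At simplicial level $k$ the relevant comparison is $U(G^{\boxtimes k}) \to Q(G^{\boxtimes k}) \cong (QG)^k$, which is a weak equivalence since $G^{\boxtimes k}$ is cofibrant in $\aM_*$ (using the pushout-product axiom for the symmetric monoidal model structure) and $U \to Q$ is a weak equivalence on cofibrant objects. Because $G$ is well-based, the degeneracies in $B_\bullet(\ptspace,G,\ptspace)$ and $B_\bullet(\ptspace,G,G)$ are $h$-cofibrations, so both simplicial objects are good in the sense of~\cite[\S 3.1]{Blumberg-Cohen-Schlichtkrull}; hence geometric realization preserves these levelwise weak equivalences.

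Finally I would deduce the quasifibration property from the commutative square
\[
\xymatrix{
UE_\boxtimes G \ar[r] \ar[d] & EQG \ar[d] \\
UB_\boxtimes G \ar[r] & BQG
}
\]
in which the horizontal maps are weak equivalences. The fiber of the left vertical map over the basepoint is $G$ (as noted before the theorem statement by inspection of the simplicial formula for $\boxtimes$), the fiber on the right is $QG$, and these are identified via the weak equivalence $G \to QG$. Gluing weak equivalences identifies the homotopy fibers of the two vertical maps, and since the right-hand one is a quasifibration, the comparison from the actual fiber $G$ to the homotopy fiber of $UE_\boxtimes G \to UB_\boxtimes G$ is a weak equivalence over the basepoint; since $G$ is group-like, $UB_\boxtimes G$ is path-connected, and translation by elements of the monoid transports this conclusion to every fiber. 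The main obstacle is the compatibility of the comparison $U \to Q$ with the bar construction in the presence of only ``good'' (rather than Reedy-cofibrant) simplicial structure, together with verifying that $Q$ preserves well-basedness; these are essentially point-set verifications but must be done carefully to legitimately invoke the classical quasifibration criterion.
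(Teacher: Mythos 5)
Your proposal is correct and follows essentially the same route as the paper's proof: rectify via the strong monoidal left adjoint $Q$ to the genuine group-like topological monoid $QG$, invoke May's classical quasifibration theorem for $E(QG) \to B(QG)$, and transfer the conclusion back through the comparison square induced by $U \to Q$, using properness/goodness of the bar constructions so that realization preserves the levelwise weak equivalences. The only differences are points of extra care on your side --- explicitly checking that $Q$ carries well-basedness of $G$ to a nondegenerate basepoint for $QG$, and arguing the fiber-to-homotopy-fiber comparison at the basepoint and then transporting by the group-like monoid action --- neither of which constitutes a genuinely different argument.
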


\begin{proof}
By the remarks above, $QE_{\boxtimes}{G} \cong E(QG)$ and
$QB_{\boxtimes}{G} \cong B(QG)$.  By naturality, there is a
commutative diagram 
\[
\xymatrix{
UE_{\boxtimes}{G} \ar[r] \ar[d]^{U\pi} & E(QG) \ar[d]^{Q\pi} \\
UB_{\boxtimes}{G} \ar[r]^f & B(QG).\\
}
\]
For any $p \in UB_{\boxtimes}{G}$, $(U\pi)^{-1}(p) = UG$; $\pi^{-1}(p)
= G$ by inspection of the definition of
$\boxtimes$~\cite[4.1]{Blumberg-Cohen-Schlichtkrull}.
Furthermore, $(Q\pi)^{-1}(fp) =
QG$, and the map between them is induced from the natural
transformation $U \to Q$.  Writing $F(U\pi)_p$ for the homotopy fiber
of $U\pi$ at $p$ and $F(Q\pi)_{fp}$ for the homotopy fiber of
$Q\pi$ at $fp$, we have a commutative diagram
\[
\xymatrix{
UG \cong (U\pi)^{-1}(p) \ar[r] \ar[d] & F(U\pi)_p \ar[d] \\
QG \cong (Q\pi)^{-1}(fp) \ar[r] & F(Q\pi)_{fp},\\
}
\]
where the horizontal maps are the natural inclusions of the actual
fiber in the homotopy fiber.  The hypotheses on $G$ ensure that
the vertical maps are weak equivalences: on the left, this follows
directly because $G$ is cofibrant, and on the right, we use the
fact that $UE_{\boxtimes}{G} \to QE_{\boxtimes}{G}$ and
$UB_{\boxtimes}{G} \to QB_{\boxtimes}{G}$ are weak 
equivalences since $U$ and $Q$ commute with geometric realization and
all the simplicial spaces involved are proper.  Furthermore, since
$QG$ is a group-like topological monoid with a nondegenerate
basepoint, $Q\pi$ is a quasifibration \cite[7.6]{May:class}, and so
the inclusion of the actual fiber of $U\pi$ in the homotopy fiber of
$U\pi$ is an equivalence.  That is, the bottom horizontal map is an
equivalence.  Thus, we deduce that the top horizontal map is an
equivalence and so that $U\pi$ is a quasifibration.
\end{proof}

As one would expect from the definition of the category $\aM_*$, the
category of Elmendorf-Kriz-Mandell-May $S$-modules is the natural model for the
stabilization.  Specifically, the $(\Sigma^{\infty}_+, \Omega^\infty)$
adjunction on the category $\spectra$ of Lewis-May-Steinberger spectra
and the natural 
equivalence $\sL(1) \ltimes \Sigma^{\infty}_+ X \cong \Sigma^{\infty}_+
(\sL(1) \times X)$ gives rise to an
adjunction $(\Sigma^{\infty}_{\L+}, \Omega^\infty_{\L})$ connecting
$\spaces[\bL]$ and the Elmendorf-Kriz-Mandell-May category of
$\L$-spectra~\cite[7.2]{Lind}. 
To model this in the setting of $*$-modules, for an $S$-modules $M$ we
define  
\[
\Omega^\infty_{S} M = * \boxtimes_{\sL} \Omega^{\infty}_{\L} F_{\sL}(S,M),
\]
where $F_{\sL}(S,M)$ is the mapping $\L$-spectrum~\cite[7.4]{Lind}.
(See also \cite[\S 6]{Basterra-Mandell} for discussion of this
adjunction.)  We then obtain the following result:

\begin{Theorem}\label{thm:*-mod-stable}{\cite[7.5]{Lind}}
There is a strong symmetric monoidal left Quillen functor 
\[
\Sigma^{\infty}_{\L+} \colon \aM_* \to \aM_S.
\]
The corresponding lax symmetric monoidal right Quillen adjoint is 
\[
\Omega^\infty_{S} \colon \aM_S \to \aM_*.
\] 
\end{Theorem}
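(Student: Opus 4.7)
The plan is to construct $\Sigma^{\infty}_{\L+}$ by lifting the classical $(\Sigma^{\infty}_{+},\Omega^{\infty})$ adjunction between $\spaces$ and the Lewis--May--Steinberger category $\spectra$ in two stages: first to the $\L$-level (promoting the space-side to $\spaces[\L]$ and the spectrum-side to $\L$-spectra), and then descending to the unital categories $\aM_{*}$ and $\aM_{S}$. The key calculation that makes the first lift possible is the isomorphism $\sL(1)\ltimes \sinf_{+}X \cong \sinf_{+}(\sL(1)\times X)$, which shows that $\sinf_{+}$ carries the $\Lin(1)$-action on a space to the $\Lin(1)$-action (via $\ltimes$) defining an $\L$-spectrum; the right adjoint $\Omega^{\infty}_{\L}$ is then simply $\Omega^{\infty}$ of the underlying spectrum, carrying its inherited $\L$-action.

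Next, I would descend this $\L$-level adjunction to $*$-modules on one side and $S$-modules on the other, in a manner parallel to the construction of the internal hom $F_{\boxtimes}(-,-) = *\boxtimes F_{\sL}(-,-)$ recalled just above the statement. Concretely, for a $*$-module $X$, define
\[
\Sigma^{\infty}_{\L+}X = S\smash_{\sL}\sinf_{\L+}X,
\]
which is an $S$-module by construction, and for an $S$-module $M$ define
\[
\Omega^{\infty}_{S}M = *\boxtimes_{\sL}\Omega^{\infty}_{\L}F_{\sL}(S,M)
\]
as in the statement. The adjunction isomorphism
\[
\Map_{\aM_{S}}(\Sigma^{\infty}_{\L+}X,M)\iso \Map_{\aM_{*}}(X,\Omega^{\infty}_{S}M)
\]
is then a formal consequence of the $\L$-level adjunction together with the extension/restriction-of-scalars adjunctions $S\smash_{\sL}(-) \dashv F_{\sL}(S,-)$ and $*\boxtimes_{\sL}(-) \dashv$ (forgetful) that define the unital subcategories.

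For the strong symmetric monoidal property, the key point is that $\sinf_{+}$ on $\spaces$ is already strong symmetric monoidal with respect to $\times$ and $\smash$, and that both the $\L$-level product on $\L$-spectra and the $\boxtimes$ product on $\L$-spaces are built by the same operadic coequalizer recipe using $\Lin(2)$. Thus $\sinf_{\L+}$ commutes with these products up to canonical isomorphism, and passing to unital objects on both sides (which replaces $\sinf_{\L+}S^{0}$ by $S$ and the unit $*$ on the space side by the unit $S$ on the spectrum side) preserves strong monoidality. Unit preservation $\Sigma^{\infty}_{\L+}(*)\iso S$ follows from $\sinf_{+}(*)\iso S$ together with the unital descent.

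The main technical obstacle is verifying the Quillen property, i.e.\ that $\Sigma^{\infty}_{\L+}$ preserves cofibrations and acyclic cofibrations. Since the model structure on $\aM_{*}$ is lifted from $\spaces$ along the right adjoint, the generating (acyclic) cofibrations of $\aM_{*}$ are obtained by applying the left adjoint $\aM_{*}\leftarrow\spaces$ to the standard generating (acyclic) cofibrations of $\spaces$; by uniqueness of adjoints it suffices to check that the composite $\spaces\to\aM_{*}\to\aM_{S}$ agrees with the left Quillen functor $\spaces\xrightarrow{\sinf_{+}}\spectra\to\aM_{S}$, which is the content of the isomorphism $\sL(1)\ltimes\sinf_{+}X\iso\sinf_{+}(\sL(1)\times X)$ together with unital descent. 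Preservation of weak equivalences between cofibrant objects is then automatic from the classical fact that $\sinf_{+}$ does so on $\spaces$, and the right adjoint $\Omega^{\infty}_{S}$ preserves weak equivalences because $F_{\sL}(S,-)$ is a weak equivalence on all objects by~\cite[\S I, Cor 8.7]{EKMM} and $\Omega^{\infty}_{\L}$ preserves weak equivalences of $\L$-spectra with the appropriate (cofibrant/fibrant) hypotheses.
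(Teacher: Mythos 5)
The paper does not supply a proof of this statement; it is quoted verbatim from [Lind, 7.5], and the preceding paragraph only records the two ingredients (the isomorphism $\sL(1)\ltimes\Sigma^{\infty}_{+}X\cong\Sigma^{\infty}_{+}(\sL(1)\times X)$ giving the $\L$-level adjunction, and the formula $\Omega^{\infty}_{S}M=*\boxtimes_{\sL}\Omega^{\infty}_{\L}F_{\sL}(S,M)$). Your outline follows that sketch, so the overall strategy is the intended one. There are, however, two concrete problems with the formal bookkeeping that would need to be fixed before this becomes a proof.

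First, you assert the descent adjunction in the form $*\boxtimes_{\sL}(-)\dashv\text{(forgetful)}$, i.e.\ that $*\boxtimes_{\sL}(-)$ is a \emph{left} adjoint to the inclusion $\aM_*\hookrightarrow\spaces[\L]$. This has the handedness reversed: the paper explicitly states, and one checks directly, that the inclusion is the left adjoint and $*\boxtimes_{\sL}(-)$ is the \emph{right} adjoint, with counit the weak-unit map $\lambda\colon *\boxtimes_{\sL}X\to X$ (there is no natural map in the other direction, which is what a unit for your claimed adjunction would require). Once corrected, the left Quillen functor is simply the composite $\aM_*\hookrightarrow\spaces[\L]\xrightarrow{\Sigma^{\infty}_{\L+}}\spectra[\L]$; the key observation is that this already lands in $\aM_S$, since strong monoidality gives $S\smash_{\sL}\Sigma^{\infty}_{\L+}X\cong\Sigma^{\infty}_{\L+}(*\boxtimes_{\sL}X)\cong\Sigma^{\infty}_{\L+}X$ for $X\in\aM_*$. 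Your formula $\Sigma^{\infty}_{\L+}X=S\smash_{\sL}\Sigma^{\infty}_{\L+}X$ is therefore not wrong, but the extra $S\smash_{\sL}(-)$ is redundant and is a symptom of running the unital descent in the wrong direction.

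Second, you write down the paper's formula for $\Omega^{\infty}_{S}$ but give no reason for the presence of $F_{\sL}(S,-)$. The purely formal composite of right adjoints is $*\boxtimes_{\sL}\Omega^{\infty}_{\L}(-)$ applied to the inclusion; the extra $F_{\sL}(S,-)$ (which changes the object only up to weak equivalence, by [EKMM, I, Cor.\ 8.7]) is a point-set adjustment needed to make the adjunction and its Quillen properties come out on the nose, and a proof should say so rather than letting it pass silently. Your closing remarks on strong monoidality and the Quillen property are plausible in outline, though the caveat about ``appropriate (cofibrant/fibrant) hypotheses'' for $\Omega^{\infty}_{\L}$ is unnecessary since every object is fibrant in these categories.
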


A consequence of Theorem~\ref{thm:*-mod-stable} is the following
generalization:

\begin{Corollary}\label{cor:G-mod-stable}
The adjunction $(\Sigma^\infty_{\L+}, \Omega^\infty_{S})$ specializes
to a Quillen adjunction 
\[
\Sigma^{\infty}_{\L+} \colon \aM_G \leftrightarrows \aM_{\splus G} \colon
\Omega^\infty_{S}.
\]
\end{Corollary}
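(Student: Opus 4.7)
The plan is to verify that the adjunction $(\Sigma^\infty_{\L+}, \Omega^\infty_S)$ of Theorem \ref{thm:*-mod-stable} descends to the module categories on both sides, and then to observe that the Quillen conditions follow automatically from the way the module-category model structures are lifted.

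First I would exploit the monoidal structure of the adjunction. Since $\Sigma^\infty_{\L+}$ is strong symmetric monoidal, it carries the monoid $G$ in $\aM_*$ to the monoid $\splus G$ in $\aM_S$, and carries a right $G$-module $P$ to a right $\splus G$-module with structure map
\[
\splus P \Smash_{S} \splus G \iso \splus(P \boxtimes G) \to \splus P,
\]
where the first map is the strong monoidal comparison. This yields a functor $\Sigma^\infty_{\L+}\colon \aM_G \to \aM_{\splus G}$. For the right adjoint, one uses the standard fact that a lax symmetric monoidal right adjoint to a strong symmetric monoidal functor induces a right adjoint on modules: given a right $\splus G$-module $N$, the lax structure map together with the unit $\eta \colon G \to \Omega^\infty_S \splus G$ supplies a composite
\[
\Omega^\infty_S N \boxtimes G \xra{1\boxtimes \eta} \Omega^\infty_S N \boxtimes \Omega^\infty_S \splus G \to \Omega^\infty_S (N \Smash_S \splus G) \to \Omega^\infty_S N,
\]
making $\Omega^\infty_S N$ a right $G$-module. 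A routine diagram chase shows that the underlying adjunction isomorphism $\aM_S(\splus P, N) \iso \aM_*(P, \Omega^\infty_S N)$ restricts to an isomorphism $\aM_{\splus G}(\splus P, N) \iso \aM_G(P, \Omega^\infty_S N)$ on module maps.

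To conclude that this is a Quillen adjunction, I would invoke the construction of the model structures on $\aM_G$ and $\aM_{\splus G}$. By the theorem on $\aM_G$ proved earlier in this section and its analog for $\aM_{\splus G}$ (via \cite{EKMM}), the fibrations and weak equivalences in each category are detected by the respective forgetful functors to $\aM_*$ and $\aM_S$. Since Theorem~\ref{thm:*-mod-stable} tells us that $\Omega^\infty_S \colon \aM_S \to \aM_*$ is right Quillen, it preserves fibrations and acyclic fibrations; the lifted functor $\Omega^\infty_S \colon \aM_{\splus G} \to \aM_G$ therefore inherits these preservation properties by the detection criterion.

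The main obstacle, such as it is, is bookkeeping: verifying that the module structure produced by the lax monoidal right adjoint is genuinely adjoint to the strong monoidal module structure, and that no issue arises from the distinction between the unital product $\boxtimes$ on $\aM_*$ and the operadic smash product $\Smash_S$ on $\aM_S$ when writing down the structure maps. These points are entirely formal once one has the monoidal adjunction of Theorem~\ref{thm:*-mod-stable} and the detection property of the lifted model structures, so no substantive technical difficulty remains.
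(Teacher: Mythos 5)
Your proof is correct and fills in exactly the formal argument the paper intends by calling this a "consequence" of Theorem~\ref{thm:*-mod-stable}: the strong/lax symmetric monoidal structure of the Quillen pair lifts it to module categories, and the Quillen conditions descend because fibrations and weak equivalences in $\aM_G$ and $\aM_{\splus G}$ are created by the forgetful functors. No gaps.
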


\subsection{Thom spectra}
\label{sec:thom-spectra-2}

Assembling adjunctions from the previous section, we have the
following structured version of the adjunction~\eqref{eq:15}:
\begin{equation} \label{eq:Lainfadj}
\xymatrix{ (\aM_*[\T])^{\times} \ar@<.5ex>[r] & {\aM_*[\T]}
\ar@<.5ex>[r]^-{\Sigma^{\infty}_{\L+}} \ar@<.5ex>[l]^-{\GLsym} & {\aM_S[\T]\colon
\GLsym.}  \ar@<.5ex>[l]^-{\linf_{S}} }
\end{equation}

Taking a cofibrant replacement $(\GL{R})^c$ in the category
$\aM_*[\bT]$, we have a composite map of $S$-modules
\[
\gamma \colon \Sigma^{\infty}_{\L+} ((\GL{R})^c) \to
\Sigma^{\infty}_{\L+} \GL{R} \to R, 
\]
where the second map is the counit of the adjunction in
equation~\eqref{eq:Lainfadj}.

Using Corollary~\ref{cor:G-mod-stable}, we conclude the following
lemma:

\begin{Lemma}
The map $\gamma$ gives $R$ the structure of a left
$\Sigma^{\infty}_{\L+} ((\GL{R})^{c})$ module.
\end{Lemma}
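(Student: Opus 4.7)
The plan is to recognize $\gamma$ as a map of monoids in $\aM_S$ --- that is, a map of $S$-algebras --- and then to invoke the standard mechanism by which such a map endows its target with a left module structure over its source by restriction of scalars.

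First I would observe that $\Sigma^{\infty}_{\L+}((\GL{R})^c)$ is an $S$-algebra. By construction $\GL{R}$ is a (group-like) monoid in $\aM_*$, and $(\GL{R})^c$ is its cofibrant replacement taken in the category $\aM_*[\bT]$ of such monoids, so $(\GL{R})^c$ is itself a monoid and the cofibrant replacement map is a morphism of monoids in $\aM_*$. Applying the strong symmetric monoidal left Quillen functor $\Sigma^{\infty}_{\L+}$ of Theorem~\ref{thm:*-mod-stable} produces monoids in $\aM_S$ together with an induced morphism of $S$-algebras $\Sigma^{\infty}_{\L+}((\GL{R})^c) \to \Sigma^{\infty}_{\L+}\GL{R}$. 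The second factor of $\gamma$ is the counit of~\eqref{eq:Lainfadj}, and since that adjunction is explicitly recorded at the level of the monoid categories $\aM_*[\bT]$ and $\aM_S[\bT]$, its counit at each $S$-algebra is by construction a morphism of $S$-algebras. Composing, $\gamma$ is itself a map of $S$-algebras.

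Given this, I would equip $R$ with the action
\[
\Sigma^{\infty}_{\L+}((\GL{R})^c) \wedge_S R \xrightarrow{\gamma \wedge_S \id} R \wedge_S R \xrightarrow{\mu_R} R,
\]
where $\mu_R$ denotes the multiplication on $R$. Verifying the associativity and unit axioms for a left module action is a routine diagram chase depending only on the fact that $\gamma$ respects units and multiplications. There is no real technical obstacle to this argument; the only bookkeeping point is to confirm that~\eqref{eq:Lainfadj} is genuinely an adjunction at the monoid level and that $\Sigma^{\infty}_{\L+}$ is strong symmetric monoidal, both of which are ensured by Theorem~\ref{thm:*-mod-stable} and the construction of~\eqref{eq:Lainfadj} from the underlying adjunctions on $\aM_*$ and $\aM_S$.
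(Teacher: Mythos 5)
Your proof is correct and matches what the paper intends: the paper gives no explicit proof beyond the one-line "Using Corollary~\ref{cor:G-mod-stable}, we conclude the following lemma," and the content of that citation is precisely the fact you spell out, that $\Sigma^{\infty}_{\L+}$ is strong symmetric monoidal so both constituent maps of $\gamma$ are $S$-algebra maps (the first because $(\GL{R})^c\to\GL{R}$ is a map of monoids in $\aM_*$, the second because it is the counit of the monoid-level adjunction~\eqref{eq:Lainfadj}), and restriction of scalars along a map of $S$-algebras gives the left module structure. Your appeal to Theorem~\ref{thm:*-mod-stable} together with~\eqref{eq:Lainfadj} is the natural expansion of the paper's terse citation, and if anything is a slightly more precise pointer to the ingredients actually used.
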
 

We can now give the definition of the Thom spectrum functor.  For
convenience, assume that $R$ is a cofibrant $S$-algebra.  We will
regard the input as a map 
\[
f \colon X \to B_{\boxtimes} ((\GL{R})^{c})
\]
of $*$-modules; this entails no loss of generality, as we now
explain.  Suppose that we are given more classical input data in the
form of a map of spaces $f \colon X \to B_{\boxtimes}\GL{R}$.  By
adjunction, this is equivalent to a map $f' \colon \L X \to
B_{\boxtimes}\GL{R}$ in $\spaces[\L]$.  Applying $* \boxtimes_{\sL}
(-)$ yields a map of $*$-modules
\[
f'' \colon * \boxtimes_{\sL} \L X \to * \boxtimes_{\sL}
B_{\boxtimes}\GL{R} \cong B_{\boxtimes}\GL{R}.
\]
Finally, we take the (homotopy) pullback in the diagram
\[
\xymatrix{
\tilde{X} \ar[r]^-{\tilde{f}} \ar[d]^{\htp} &
B_{\boxtimes} ((\GL{R})^{c}) \ar[d]^{\htp} \\
\ast \boxtimes_{\sL} \L X \ar[r]^{f''} & B_{\boxtimes}(\GL{R}),
}
\]
where the righthand vertical map is induced by the cofibrant
replacement $(\GL{R})^{c} \to \GL{R}$ in $\aM_*[\bT]$.

\begin{Definition} \label{def-thom-ainfty}
Let $f \colon X \to B_{\boxtimes} ((\GL{R})^{c})$ be a map in $\aM_*$.
The \emph{Thom spectrum} of $f$ is the functor  
\[
M \colon \aM_* / B_{\boxtimes} ((\GL{R})^{c}) \to \aM_R 
\]
given by
\[
Mf  \eqdef \Sigma^{\infty}_{\L+} P' \Smash_{\Sigma^{\infty}_{\L+} ((\GL{R})^{c})} R,
\]
where here $P'$ denotes a cofibrant replacement as a (right) 
$(\GL{R})^{c}$-module of the pullback $P$ in the diagram 
\[
\xymatrix{ 
P \ar[r] \ar[d] & E_{\boxtimes} ((\GL{R})^{c}) \ar[d]
\\
X \ar[r]^-{\tilde{f}} & B_{\boxtimes} ((\GL{R})^{c})
}
\]
(here $\tilde{f}$ is a fibrant replacement of $f$).  
\end{Definition}

By construction, $\Sigma^{\infty}_{\L+} P'$ is then a cofibrant
$\Sigma^{\infty}_{\L+} ((\GL{R})^{c})$-module, so we are computing the
derived smash product.  Moroever, since $R$ is cofibrant $S$-algebra,
the resulting Thom spectrum $Mf$ is a cofibrant $R$-module.

\begin{Remark}
Definition~\ref{def-thom-ainfty} constructs the Thom spectrum directly
as a homotopical functor and a homotopical left adjoint.  One might
hope to construct a point-set Thom functor which we then derive in the
usual fashion, but because this definition involves the composite of a
right adjoint equivalence (the pullback functor from $\aM_* /
B_{\boxtimes} ((\GL{R})^{c})$ to $\aM_{(\GL{R})^{c}}$) and a left adjoint
(the functor $\Sigma^{\infty}_{\L+} (-) \smash_{\Sigma^{\infty}_{\L+}
  ((\GL{R})^{c})} R$ from $\aM_{(\GL{R})^{c}}$ to $\aM_R$), it is involved
(although possible) to give a model which can be derived without the
intermediate cofibrant replacement step.
\end{Remark}

We now want to interpret the notion of orientation in this setting.
We first observe that for any right $R$-module $T$ there is a
natural equivalence of mapping spaces
\[
 \Map_{\aM_R} (Mf,T)\heq
 \Map_{\aM_{\Sigma^{\infty}_{\L+} ((\GL{R})^{c})}} (\Sigma^{\infty}_{\L+}
 P',T) \htp \Map_{\aM_{(\GL{R})^{c}}} (P',\linf_{S} T).
\]
Note that here we are computing derived mapping spaces because all
objects are fibrant in all of the model categories involved.  In
particular, taking $T=R$ we have 
\begin{equation} \label{eq:19}
\Map_{\aM_{R}} (Mf,R) \heq \Map_{\aM_{(\GL{R})^{c}}} (P',\linf_{S} R).
\end{equation}

This gives rise to the following definition of the space of
orientations of a Thom spectrum.

\begin{Definition} \label{def-orientation}
The space of \emph{orientations} of $Mf$ is the subspace of
components of the (derived) mapping space $\Map_{\aM_R} (Mf,R)$ which
correspond to  
\[
\Map_{\aM_{(\GL{R})^{c}}} (P',\GL{R}) \subseteq \Map_{\aM_{(\GL{R})^{c}}}(P',\linf_{S} R)
\]
under the adjunction \eqref{eq:19}.  That is, we form the homotopy
pullback diagram
\begin{equation} \label{eq:71}
\xymatrix{ {\CatOf{orientations} (Mf,R)} \ar[r]^-{\heq}
\ar@{>->}[d] & {\Map_{\aM_{(\GL{R})^{c}}} (P',\GL{R})} \ar@{>->}[d]
\\
{\Map_{\aM_{R}} (Mf,R)} \ar[r]^-{\heq} & \Map_{\aM_{(\GL{R})^{c}}}
(P', \linf_{S}{R}).}
\end{equation}
\end{Definition}

We can provide an obstruction theoretic description of the space of
orientations in terms of lifts in the diagram
\begin{equation} \label{eq:46}
\xymatrix{ {P} \ar[r] \ar[d] & {E_{\boxtimes}{G}} \ar[d]^{\pi}
\\
{X} \ar[r]^{f} \ar@{-->}[ur] & {B_{\boxtimes}{G}.}  }
\end{equation}

\begin{Theorem} \label{t-pr-princ-bundle-obstr-thy}
Suppose that $G$ is a cofibrant group-like monoid in $\aM_*$, and
$f$ is a fibration.  Then there is a natural zigzag of weak
equivalences between the derived mapping space
$\Map_{\aM_*/B_{\boxtimes}G}(f,\pi)$ of lifts in the diagram
\eqref{eq:46} and the derived mapping space
$\Map_{\aM_G}(P,G)$.
\end{Theorem}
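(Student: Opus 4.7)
The plan is to prove the equivalence via the classical principal-bundle (Borel) identification, suitably adapted to the $\aM_{*}$ framework. The essential input is that by Theorem~\ref{t-EL-BL-quasi-fib}, $\pi\colon E_{\boxtimes}G\to B_{\boxtimes}G$ is a principal $G$-quasifibration, and its pullback along the fibration $f$ produces a principal $G$-(quasi)fibration $P\to X$.

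First I would reduce the derived lift space $\Map_{\aM_{*}/B_{\boxtimes}G}(f,\pi)$ to a derived mapping space in $\aM_G$. Pullback along $\pi$ defines a functor
\[
\phi\colon \aM_{*}/B_{\boxtimes}G \longrightarrow \aM_G, \qquad (Y\to B_{\boxtimes}G)\longmapsto Y\times_{B_{\boxtimes}G}E_{\boxtimes}G,
\]
with $G$ acting on the second factor; its right adjoint is the Borel construction $M\mapsto (M\times_{G}E_{\boxtimes}G\to B_{\boxtimes}G)$. The hypotheses that $G$ is cofibrant and well-based and that $f$ is a fibration ensure $\phi$ is part of a derived equivalence (so that pullback computes the homotopy pullback), and since $\phi(f)=P$ while $\phi(\pi)=E_{\boxtimes}G\times_{B_{\boxtimes}G}E_{\boxtimes}G$, one obtains
\[
\Map_{\aM_{*}/B_{\boxtimes}G}(f,\pi)\;\simeq\; \Map_{\aM_G}\!\bigl(P,\;E_{\boxtimes}G\times_{B_{\boxtimes}G}E_{\boxtimes}G\bigr).
\]

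Next I would exhibit a $G$-equivariant weak equivalence $E_{\boxtimes}G\times_{B_{\boxtimes}G}E_{\boxtimes}G\simeq G$ in $\aM_G$. The shear map
\[
E_{\boxtimes}G\boxtimes G\;\longrightarrow\; E_{\boxtimes}G\times_{B_{\boxtimes}G}E_{\boxtimes}G,\qquad (e,g)\longmapsto (e,\,e\cdot g),
\]
is an isomorphism of right $G$-modules, verified simplicial-level-wise on the bar constructions defining $E_{\boxtimes}G$ and $B_{\boxtimes}G$ (using that the $G$-action is free at each level and that $\boxtimes$ commutes with geometric realization). Combining this isomorphism with the equivalences $E_{\boxtimes}G\simeq *$ (contractibility of $B_{\bullet}(*,G,G)$) and $*\boxtimes G\simeq G$ (unitality in $\aM_{*}$) gives the desired $G$-equivariant weak equivalence. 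Substituting into the previous display produces the claimed zigzag between $\Map_{\aM_{*}/B_{\boxtimes}G}(f,\pi)$ and $\Map_{\aM_G}(P,G)$.

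The principal technical obstacle is that $\pi$ is only a quasifibration, so the Borel equivalence and the principal-bundle identifications hold only up to weak equivalence rather than strictly. I would handle this by working systematically with cofibrant-fibrant replacements: the hypotheses on $G$ and $f$, together with well-basedness, ensure that $P$ has the correct homotopy type and that the simplicial bar constructions are good in the sense of Segal. When necessary, I would reduce to the classical topological-monoid setting via the functor $Q$ of Theorem~\ref{thm:gcomp}; under $Q$, the shear isomorphism, the contractibility of $EG$, and the principal-bundle descent all reduce to their well-known classical analogs for topological monoids (e.g., as in~\cite{May:class}), completing the argument.
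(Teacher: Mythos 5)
Your proposal takes a genuinely different route from the paper's, but several of the key steps are asserted rather than proved, and at least one is incorrect as stated.

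The paper's proof is short and surgical: it immediately applies the rectification functor $Q$ (Theorem~\ref{thm:gcomp}) to transport the entire problem to the category of $QG$-spaces over $B(QG)$, where $QG$ is an honest group-like topological monoid; it then invokes the classical statement for topological monoids (\cite[8.5]{MR2380927}) and finishes by checking that $Q$ preserves the relevant homotopy pullback, using the Quillen equivalences $\aM_*/B_{\boxtimes}G \simeq \spaces/B(QG)$ and $\aM_G \simeq QG\spaces$. The heavy lifting is thus entirely outsourced to Shulman's theorem.

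Your proposal instead tries to work directly in $\aM_*$, setting up a pullback/Borel adjunction $\phi\colon \aM_*/B_{\boxtimes}G \leftrightarrows \aM_G$ and then identifying $\phi(\pi)$ with $G$ via a shear map. There are two genuine gaps here. First, the shear map
\[
E_{\boxtimes}G \boxtimes G \longrightarrow E_{\boxtimes}G \times_{B_{\boxtimes}G} E_{\boxtimes}G
\]
is \emph{not} an isomorphism, even at the simplicial level: the target involves the categorical fiber product in $\aM_*$ (computed in spaces), while the source is a $\boxtimes$-product, and $\boxtimes$ is not the categorical product in $\aM_*$ --- the canonical map $A \boxtimes B \to A \times B$ is only a weak equivalence on cofibrant objects. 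Moreover, even for ordinary topological monoids, $EG \to BG$ is only a quasifibration, not a principal bundle, so the shear map fails to be an isomorphism there too; it is only an isomorphism when $G$ is an actual group. Second, the assertion that $\phi$ ``is part of a derived equivalence'' is essentially the entire content of the theorem (restated in adjoint form); it is not a consequence of the stated hypotheses without further argument, and the argument one would give is precisely the rectification-via-$Q$ plus \cite[8.5]{MR2380927} strategy. Your closing paragraph does acknowledge that one should ``reduce to the classical topological-monoid setting via $Q$,'' which is indeed the paper's whole proof --- but in your write-up this reads as a technical cleanup for the quasifibration issue, when it is actually where all the mathematical content lives. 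If you want to pursue the Borel-adjunction framing, you would need to prove the derived equivalence in $\aM_*$ directly (rather than assume it), and replace the shear ``isomorphism'' with a zigzag of weak equivalences whose verification would again route through $Q$ and the classical result.
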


\begin{proof}
We will deduce this result from the corresponding result for
group-like monoids (e.g., see~\cite[8.5]{MR2380927}) using the
functorial rectification process provided by the functor $Q$.  

If $G$ is group-like, then $QG$ is a group-like topological monoid
which has the homotopy type of a $CW$-complex and a nondegenerate
basepoint.  Therefore, applying $Q$ and taking the homotopy pullback,
we obtain a square of $QG$-spaces in $\spaces$
\[
\xymatrix{
\hat{P} \ar[r] \ar[d] & B(QG) \ar[d]\\
QX \ar[r] & E(QG).
}
\]
such that there is a weak equivalence of derived mapping spaces
\[
\Map_{\spaces / B(QG)}(QX, E(QG))\heq\Map_{(QG) \spaces}(\hat{P}, QG).
\]
We now use Theorem~\ref{thm:gcomp}.  On the one hand, a
straightforward extension of Theorem~\ref{thm:gcomp} implies that $Q$
induces a Quillen equivalence between $\aM_*/B_{\boxtimes}G$ and
$\spaces/B (QG)$, and so there is an equivalence of derived mapping spaces 
\[
\Map_{\aM_* / B_{\boxtimes}{G}}(X, E_{\boxtimes}{G}) \heq \Map_{\spaces /
B(QG)}(QX, E(QG)).
\]
On the other hand, since $Q$ also induces a Quillen equivalence
between $\aM_G$ and $QG\spaces$, there is an equivalence of derived
mapping spaces 
\[
  \Map_{\aM_G}(P, G) \to \Map_{QG \spaces}(QP, QG).
\]
The proof of the theorem will be complete once we have shown that a
cofibrant replacement of $QP$ is naturally weakly equivalent to a
cofibrant replacement of $\hat{P}$ as $QG$-spaces.  Finally, this
follows because either derived functor associated to a Quillen
equivalence preserves homotopy limits up to a zigzag of natural weak
equivalences.  Although this result is standard, the authors are not
aware of a convenient reference and so we briefly remind the reader of
the proof.  The homotopy limit of shape $D$ in the homotopical
category $\aC$ is the right derived functor $\Ho(\aC^{D}) \to
\Ho(\aC)$ of the right adjoint (which exists on the level of
homotopical categories) of the constant diagram functor.
Since equivalences of homotopical categories (or Quillen equivalences of cofibrantly generated model categories) induce equivalences on diagram categories (or Quillen equivalences of the projective model structure on the diagram categories),
the result follows by lifting the isomorphism in the homotopy
category to a weak equivalence between cofibrant-fibrant objects. 
\end{proof}

Theorem \ref{t-pr-princ-bundle-obstr-thy} now has the following
immediate corollary.

\begin{Theorem} \label{t-th-orientation-ainfty}
The space of orientations of $Mf$ is weakly equivalent to the space
of lifts in the diagram \eqref{eq:46}.  In particular, the spectrum
$Mf$ is orientable if and only if $f \colon X\to
B_{\boxtimes} ((\GL{R})^{c})$ is null homotopic.
\end{Theorem}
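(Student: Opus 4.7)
The plan is to chain together the pullback definition of the orientation space in \eqref{eq:71}, the adjunction equivalence \eqref{eq:19}, and the principal bundle obstruction theorem~\ref{t-pr-princ-bundle-obstr-thy}. Write $G = (\GL{R})^{c}$ throughout. In the defining homotopy pullback square \eqref{eq:71}, the bottom horizontal map is a weak equivalence by \eqref{eq:19}, and the right vertical is the inclusion of a union of path components; it follows that the left vertical is also an inclusion of components, and so the space of orientations is weakly equivalent to the upper right corner $\Map_{\aM_G}(P', \GL{R})$.

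Next I would invoke Theorem~\ref{t-pr-princ-bundle-obstr-thy}, applied to the cofibrant group-like monoid $G$ and a fibrant replacement $\tilde f$ of $f$. This produces a natural zigzag of weak equivalences
\[
\Map_{\aM_G}(P', G) \heq \Map_{\aM_*/B_{\boxtimes}G}(\tilde f, \pi),
\]
and the right-hand side is by definition the derived space of lifts in diagram \eqref{eq:46}. Composing with the previous identification yields the first assertion of the theorem.

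For the ``in particular'' clause, it suffices to show that $E_{\boxtimes}G$ is weakly contractible, since then a map to $B_{\boxtimes}G$ admits a derived lift through $E_{\boxtimes}G$ if and only if it is null-homotopic; and the derived space of lifts being nonempty is equivalent to $Mf$ being orientable. The quickest route to contractibility uses the rectification functor $Q$ of Theorem~\ref{thm:gcomp}: the chain $U E_{\boxtimes}G \heq Q E_{\boxtimes}G \cong E(QG) \heq \ptspace$ holds because $Q$ commutes with geometric realization, $QG$ is a well-pointed topological monoid (since $G$ is cofibrant and well-based), and the classical two-sided bar construction $E(QG) = B(\ptspace, QG, QG)$ is standardly contractible via its simplicial contracting homotopy (the unit-inserting extra degeneracy on the right).

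All the real technical content has been absorbed into Theorem~\ref{t-pr-princ-bundle-obstr-thy}, whose proof in turn rests on the quasifibration result~\ref{t-EL-BL-quasi-fib} and the Quillen equivalences of Theorem~\ref{thm:gcomp}; granted these, the present argument is largely formal bookkeeping and I anticipate no serious obstacle.
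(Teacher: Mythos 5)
Your argument is correct and follows the same route as the paper: the paper states the theorem as an immediate corollary of Theorem~\ref{t-pr-princ-bundle-obstr-thy} together with Definition~\ref{def-orientation} and the contractibility of $E_{\boxtimes}((\GL{R})^{c})$, which is exactly the chain you spell out. Your version simply makes explicit the diagram chase in \eqref{eq:71} and the rectification argument for contractibility of $E_{\boxtimes}G$, both of which the paper leaves implicit.
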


Since our construction of the Thom spectrum takes homotopic
classifying maps to weakly equivalent spectra,
Theorem~\ref{t-th-orientation-ainfty} implies that an orientation
gives rise to an equivalence $Mf \htp \Sigma^{\infty}_{\L+} X \smash
R$.  This is a version of the Thom isomorphism theorem, and we will 
give a description of the map inducing this equivalence below.

\subsection{Orientations and the Thom isomorphism}

To make contact with familiar notions of orientation, we'll be more
explicit about the adjunctions in Definition~\ref{def-orientation}.
For this it it helpful to recapitulate some classical computations of
Thom spectra in our setting.
  
\begin{Lemma} \label{t-le-thom-spectrum-point}
The Thom spectrum of the inclusion of a point
\[
    * \to B_{\boxtimes} ((\GL{R})^{c})
\] 
is a cofibrant $R$-module which is weakly equivalent to $R$.
More generally, the Thom spectrum of a trivial map
\[
X \to * \to B_{\boxtimes} ((\GL{R})^{c})
\]
is weakly equivalent to $R \smash \Sigma^{\infty}_{\L+} X$.
\end{Lemma}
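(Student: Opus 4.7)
The plan is to identify the pullback $P$ explicitly and then directly compute the derived smash product, exploiting the fact that $\Sigma^{\infty}_{\L+}$ is strong symmetric monoidal (Theorem~\ref{thm:*-mod-stable}). The first claim is the special case of the second with $X = \ast$, so we focus on the latter and specialize at the end.

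Write $G = (\GL{R})^{c}$ and let $f \colon X \to \ast \to B_{\boxtimes}G$ be trivial. Since $f$ factors through the basepoint, the homotopy pullback $P$ is naturally identified, as a right $G$-space, with $X \times F$, where $F$ is the homotopy fiber of $\pi \colon E_{\boxtimes}G \to B_{\boxtimes}G$ at the basepoint. By Theorem~\ref{t-EL-BL-quasi-fib}, $\pi$ is a quasifibration, so $F$ agrees with the strict fiber, which by inspection of the definition of $\boxtimes$ (as noted in the proof of that theorem) is $G$ itself with its right regular action. Using the natural weak equivalence $X \boxtimes G \to X \times G$ between cofibrant objects, we may take as the cofibrant replacement $P' = X \boxtimes G$, with right $G$-action on the second factor.

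Applying $\Sigma^{\infty}_{\L+}$ and using its strong symmetric monoidal structure, we compute
\[
Mf = \Sigma^{\infty}_{\L+} P' \smash_{\Sigma^{\infty}_{\L+} G} R \simeq \bigl(\Sigma^{\infty}_{\L+} X \smash \Sigma^{\infty}_{\L+} G\bigr) \smash_{\Sigma^{\infty}_{\L+} G} R \simeq \Sigma^{\infty}_{\L+} X \smash R,
\]
where the last equivalence is the standard cancellation of the free $\Sigma^{\infty}_{\L+} G$-module against $R$. Setting $X = \ast$ recovers the first claim that $Mf \simeq R$, and $Mf$ is cofibrant as an $R$-module by construction of the Thom spectrum functor.

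The main obstacle will be verifying that this chain of identifications genuinely computes the derived smash product. This requires cofibrancy of $X$ in $\aM_{\ast}$, of $G$ in $\aM_{\ast}[\bT]$, and of $X \boxtimes G$ as a right $G$-module; each follows from the model-categorical machinery developed earlier in Section~\ref{sec:ainfty-thom-spectrum}, together with the fact that the free right $G$-module functor $(-) \boxtimes G$ sends cofibrant $*$-modules to cofibrant right $G$-modules. A smaller subtlety is that the fibrant replacement $\tilde{f}$ used in Definition~\ref{def-thom-ainfty} need not strictly factor through the basepoint, but since Theorem~\ref{t-EL-BL-quasi-fib} shows $\pi$ is a quasifibration, the identification of the homotopy fiber with $G$ is unaffected.
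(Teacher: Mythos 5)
Your proposal is correct and follows essentially the same route as the paper's own proof: identify the (homotopy) pullback over a point as $G$ itself using Theorem~\ref{t-EL-BL-quasi-fib}, conclude via the iterated pullback that $P'$ may be modeled by $X \boxtimes G$ with $G$ acting on the second factor, and then cancel $\Sigma^{\infty}_{\L+} G$ in the relative smash product using the strong monoidality of $\Sigma^{\infty}_{\L+}$. The one point to flag explicitly is that taking $P' = X \boxtimes G$ as the cofibrant replacement requires $X$ itself to be cofibrant in $\aM_{*}$ (or to first replace $X$ by a cofibrant $X^{c}$); you acknowledge this, and it is harmless since $\Sigma^{\infty}_{\L+}$ preserves weak equivalences between cofibrant objects, so the conclusion $Mf \simeq \Sigma^{\infty}_{\L+} X \smash R$ is unaffected up to weak equivalence.
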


\begin{proof}
Let $\ast  \to B_{\boxtimes} ((\GL{R})^{c})$ be the inclusion of a point.
The Thom spectrum is $\splus P' \Smash_{\Sigma^{\infty}_{\L+}
  (\GL{R})^{c}} R,$ where $P'$ is a cofibrant replacement of the
homotopy pullback
\[
\xymatrix{
P \ar[r]\ar[d] & E_{\boxtimes} ((\GL{R})^{c}) \ar[d]\\
 \ast \ar[r] & B_{\boxtimes} ((\GL{R})^{c}),
}
\]
as a $(\GL{R})^{c}$-module in $\aM_{*}$.  Since
Theorem~\ref{t-EL-BL-quasi-fib} implies that
$UE_{\boxtimes} ((\GL{R})^{c})\to UB_{\boxtimes} ((\GL{R})^{c})$ is a
quasifibration (with fiber $(\GL{R})^{c}$), it follows that
$(\GL{R})^{c} \heq P'$ as $(\GL{R})^{c}$-modules.

Consideration of the iterated pullback square
\[
\xymatrix{
\tilde{P} \ar[r] \ar[d] & \ar[d] P \ar[r]\ar[d] & E_{\boxtimes} ((\GL{R})^{c}) \ar[d]\\
X \ar[r] & \ast \ar[r] & B_{\boxtimes} ((\GL{R})^{c}),
}
\]
implies that $\tilde{P}$ is equivalent to $(\GL{R})^{c} \boxtimes X$ as a 
$(\GL{R})^{c}$-module, where $X$ has the trivial action.
\end{proof}

In particular, we have the following corollary.

\begin{Corollary}\label{t-co-Th-ELGL}
Since $E_{\boxtimes} ((\GL{R})^{c})\heq \ptspace$, we have
\[
   M (\pi \colon E_{\boxtimes} ((\GL{R})^{c}) \to
   B_{\boxtimes} ((\GL{R})^{c}) \heq R.
\]
as $R$-modules.
\end{Corollary}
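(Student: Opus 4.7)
The plan is to deduce this corollary directly from Lemma~\ref{t-le-thom-spectrum-point}. Write $G = (\GL{R})^{c}$ for brevity. Since $E_{\boxtimes} G \simeq \ptspace$ is contractible, the map $\pi\colon E_{\boxtimes} G \to B_{\boxtimes} G$ is homotopic to the trivial composite
\[
E_{\boxtimes} G \to \ptspace \to B_{\boxtimes} G.
\]
Using the homotopy invariance of the Thom spectrum construction noted after Theorem~\ref{t-th-orientation-ainfty}, I would then invoke the second clause of Lemma~\ref{t-le-thom-spectrum-point} with $X = E_{\boxtimes} G$ to obtain
\[
M\pi \simeq R \Smash \Sigma^{\infty}_{\L+} E_{\boxtimes} G.
\]
Combined with the contractibility of $E_{\boxtimes} G$ and the fact that $\Sigma^{\infty}_{\L+}$ is a left Quillen functor (Theorem~\ref{thm:*-mod-stable}), hence preserves weak equivalences between cofibrant objects, this gives $\Sigma^{\infty}_{\L+} E_{\boxtimes} G \simeq \Sigma^{\infty}_{\L+} \ptspace \simeq S$, and therefore $M\pi \simeq R$ as $R$-modules.

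The one point that requires attention is that the homotopy invariance mentioned above must be applied in the category $\aM_{*} / B_{\boxtimes} G$, not just at the level of underlying spaces; however, the passage through $\ast \boxtimes_{\sL} \L(\slot)$ described in the setup preceding Definition~\ref{def-thom-ainfty} converts a homotopy of space-level maps $E_{\boxtimes} G \to B_{\boxtimes} G$ into a homotopy of $*$-module maps, after which the derived smash product in Definition~\ref{def-thom-ainfty} depends only on the homotopy class.

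As a sanity check (and an alternative route that avoids invoking homotopy invariance), one can compute $M\pi$ directly. The pullback appearing in Definition~\ref{def-thom-ainfty} is
\[
P = E_{\boxtimes} G \times_{B_{\boxtimes} G} E_{\boxtimes} G,
\]
and Theorem~\ref{t-EL-BL-quasi-fib} shows that $\pi$ is a quasifibration with fiber $G$. Since the base $E_{\boxtimes} G$ is contractible, $P \simeq G$ as a right $G$-module, so after cofibrant replacement
\[
M\pi = \Sigma^{\infty}_{\L+} P' \Smash_{\Sigma^{\infty}_{\L+} G} R \simeq \Sigma^{\infty}_{\L+} G \Smash_{\Sigma^{\infty}_{\L+} G} R \simeq R.
\]
The main subtlety either way is the interplay between the space-level contractibility of $E_{\boxtimes} G$ and the $G$-equivariant structure on the pullback, which is handled cleanly by the quasifibration statement of Theorem~\ref{t-EL-BL-quasi-fib}; beyond that, the argument is formal.
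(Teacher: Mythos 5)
Your proposal is correct and follows essentially the same reasoning the paper intends (the corollary is presented as an immediate consequence of Lemma~\ref{t-le-thom-spectrum-point}). The paper's implied argument is the most economical version of your first route: because $E_{\boxtimes}((\GL{R})^{c})\heq\ptspace$, the object $\pi$ is weakly equivalent in $\aM_*/B_{\boxtimes}((\GL{R})^{c})$ to the point inclusion $\ptspace\to B_{\boxtimes}((\GL{R})^{c})$, and the \emph{first} clause of Lemma~\ref{t-le-thom-spectrum-point} then gives $M\pi\heq R$ at once. You instead replace $\pi$ by the trivial composite through $\ptspace$ keeping the domain $E_{\boxtimes}G$, invoke the \emph{second} clause, and then contract $\Sigma^{\infty}_{\L+}E_{\boxtimes}G\heq S$; this is a slightly longer path but buys you an argument that uses only the homotopy-of-classifying-maps form of invariance (fixed domain, varying map), which is literally what the paper states after Theorem~\ref{t-th-orientation-ainfty}. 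Your second route is a valid independent check: it essentially re-runs the proof of Lemma~\ref{t-le-thom-spectrum-point} with base $E_{\boxtimes}G$ in place of $\ptspace$. The one place to be careful is that Definition~\ref{def-thom-ainfty} forms the strict pullback only after a fibrant replacement of $\pi$, and the pullback of a quasifibration along an arbitrary map is not automatically a quasifibration; the clean statement is that $P$ computes the \emph{homotopy} pullback $E_{\boxtimes}G$ over $B_{\boxtimes}G$ against $E_{\boxtimes}G$, both of whose legs have contractible source, so $P\heq\Omega B_{\boxtimes}((\GL{R})^{c})\heq(\GL{R})^{c}$ as a $(\GL{R})^{c}$-module. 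With that phrasing, everything you wrote goes through.
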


Now suppose that $f\colon X\to B_{\boxtimes} ((\GL{R})^{c})$ is a fibration of $*$-modules,
and let $P$ be the pullback in the diagram
\begin{equation} \label{eq:72}
\xymatrix{ {P} \ar[r] \ar[d] & {E_{\boxtimes} ((\GL{R})^{c})} \ar[d]^{\pi}
\\
{X} \ar[r]^-{f} \ar@{-->}[ur]^{\tilde{a}} & {B_{\boxtimes} ((\GL{R})^{c}),}  }
\end{equation}
and let $M = Mf$.  If $\tilde{a}$ is a lift as indicated,
then by functoriality passing to Thom spectra along $\tilde{a}$
induces a map of $R$-modules 
\[
      a\colon Mf \to R.
\]
This is the orientation associated to the lift $\tilde{a}$.

Conversely, suppose that $a \colon Mf \to R$ is a map of $R$-modules.
Each point $p\in P'$ (the cofibrant replacement of $P$ as a
$(\GL{R})^{c}$-module) determines a $(\GL{R})^{c}$-map
\[
 (\GL{R})^{c} \to P'
\]
and therefore a map of $\Sigma^{\infty}_{\L+} ((\GL{R})^{c})$-modules 
\[
\Sigma^{\infty}_{\L+} ((\GL{R})^{c}) \to \Sigma^{\infty}_{\L+} P'.
\] 
Passing to Thom spectra, this in turn yields a map of $R$-modules
\[
j_{p}\colon \splus ((\GL{R})^{c}) \Smash_{\Sigma^{\infty}_{\L+}
  ((\GL{R})^{c})} R \cong R \to Mf \to R.
\]
As $p$ varies the $j_{p}$ assemble; we take the adjoint of the
composite
\begin{align*}
\Sigma^{\infty}_{\L+} P' & \longrightarrow F_{\Sigma^{\infty}_{\L+} ((\GL{R})^{c})}(\Sigma^{\infty}_{\L+} ((\GL{R})^{c}), \Sigma^{\infty}_{\L+} P')
\\ & \longrightarrow F_R(\Sigma^{\infty}_{\L+} ((\GL{R})^{c}) \smash_{\Sigma^{\infty}_{\L+} ((\GL{R})^{c})} R,
\splus P' \smash_{\Sigma^{\infty}_{\L+} ((\GL{R})^{c})} R) \\
& \cong F_R(R,Mf) \xrightarrow{a} F_R(R,R),  
\end{align*}
where the first map is the adjoint of the action map and the second
map is induced by functoriality.

The argument for Proposition \ref{inf-t-pr-Rwe-and-GL} shows that
\[
\Omega^{\infty}_{S} F_{R} (R,R)\heq \linf_{S} R,
\]
and the resulting map 
\[
   j\colon P' \to \linf_{S} R
\]
corresponds to $a$ under the equivalence of derived mapping spaces
\[
  \Map_{\EKMM_R} (Mf,R) \heq \Map_{\EKMM_{\GL{R}}} (P',\linf_{S} R).
\]

Put another way, for each $q\in X$,
Lemma~\ref{t-le-thom-spectrum-point} implies that 
the Thom spectrum $M_{q}$ of $ q \to X
\to B_{\boxtimes}\GL{R}$ is non-canonically weakly equivalent to $R$. 
Passing to Thom spectra gives a map
\[
   i_{q}\colon M_{q} \to Mf \xra{a} R.
\]
A choice of point $p\in P$ lying over $q$ fixes an equivalence $R
\heq M_{q}$ making the diagram 
\[
\xymatrix{
R
\ar[dr]_{j_{p}}
\ar[rr]^{\heq}
&
&
{M_{q}}
 \ar[dl]^{i_{q}} \\
&
{R}
}
\]
commute.
Thus we have the following analogue of the standard description of
Thom classes as in for example~\cite[Definition 14.5]{MR0385836}
%Improve citation here: AJB

\begin{Proposition} \label{t-pr-characterize-orientations}
Suppose that $a\colon Mf \to R$ is a map of $R$-modules.  Then the
following are equivalent.
\begin{enumerate}
\item $a$ is an orientation.
\item For each $q\in X$, the map of
$R$-modules
\[
 i_{q}\colon  M_{q}\to Mf \xra{a} R
\]
is a weak equivalence.
\item For each $p\in P$, the map of $R$-modules
\[
j_{p}\colon     R \to Mf \xra{a} R
\]
is a weak equivalence.
\end{enumerate}
\end{Proposition}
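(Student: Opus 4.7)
The plan is to chain the equivalences as $(1)\Leftrightarrow(3)\Leftrightarrow(2)$, using the adjoint description of orientations from Definition~\ref{def-orientation} for the first equivalence and the explicit comparison diagram displayed immediately before the proposition for the second.

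For $(1)\Leftrightarrow(3)$, I unpack the adjunction~\eqref{eq:19}. The map $a\colon Mf\to R$ corresponds under this adjunction to the map $j\colon P'\to\linf_{S}R$ constructed in the discussion following diagram~\eqref{eq:72}, and by Definition~\ref{def-orientation} the map $a$ is an orientation precisely when $j$ factors through the inclusion $\GL{R}\hookrightarrow\linf_{S}R$. By Definition~\ref{def:gl1}, the space $\GL{R}$ is a union of path components of $\linf_{S}R$, so this factorization is a componentwise (equivalently, pointwise) condition: $j$ lands in $\GL{R}$ iff $j(p)\in\GL{R}$ for every $p\in P'$. By Proposition~\ref{inf-t-pr-Rwe-and-GL}, $\GL{R}\subseteq\linf_{S}R$ is exactly the subspace corresponding to homotopy self-equivalences $\Aut(R)\subseteq\End(R)$, and the construction of $j_{p}$ preceding the proposition shows that $j_{p}\colon R\to R$ is precisely the $R$-module map represented by the point $j(p)\in\linf_{S}R$ under this identification. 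Therefore $j(p)\in\GL{R}$ iff $j_{p}$ is a weak equivalence, establishing $(1)\Leftrightarrow(3)$.

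For $(3)\Leftrightarrow(2)$, the commutative triangle displayed just before the proposition exhibits $j_{p}$ as the composite of a weak equivalence $R\heq M_{q}$ with $i_{q}$, so $j_{p}$ is a weak equivalence iff $i_{q}$ is, whenever $p\in P$ lies over $q\in X$. To see that the two quantifications agree, note that $P\to X$ is the pullback of the quasifibration $E_{\boxtimes}((\GL{R})^{c})\to B_{\boxtimes}((\GL{R})^{c})$ of Theorem~\ref{t-EL-BL-quasi-fib} along $f$, so its fiber is $(\GL{R})^{c}$. Since $(\GL{R})^{c}$ contains the identity and is in particular nonempty, $P\to X$ is surjective on points; every $q\in X$ admits some $p\in P$ above it, and any two choices of $p$ over the same $q$ sit in a common $(\GL{R})^{c}$-orbit whose elements all yield the same weak-equivalence status for $j_{p}$.

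The main subtlety is the pointwise reading of the factorization condition in $(1)\Leftrightarrow(3)$: the statement ``$j$ factors through $\GL{R}$'' a priori only makes sense up to homotopy inside the mapping space~\eqref{eq:19}, but because $\GL{R}\subseteq\linf_{S}R$ is a union of path components the homotopy-level factorization is equivalent to a strict pointwise factorization detected on $\pi_{0}$. Similarly, since the cofibrant replacement $P'\to P$ is a weak equivalence of $(\GL{R})^{c}$-spaces, the $\pi_{0}$-indexed condition on $P'$ agrees with the one on $P$. Once this bookkeeping is in place, the argument is purely formal given the adjoint identifications and the explicit description of $j_{p}$ assembled in the paragraphs preceding the proposition.
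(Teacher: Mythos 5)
Your proposal is correct and follows essentially the same route the paper intends: the paper leaves the proposition without a formal proof block because the preceding paragraphs (construction of $j_{p}$, the identification of $j$ with $a$ under the adjunction~\eqref{eq:19}, the identification of $\GL{R}\subseteq\linf_{S}R$ with weak equivalences via Proposition~\ref{inf-t-pr-Rwe-and-GL}, and the commutative triangle relating $j_{p}$ and $i_{q}$) constitute exactly the argument you spell out. Your careful handling of the bookkeeping issues — that $\GL{R}$ is a union of path components so the factorization is detected pointwise, that $P\to X$ is surjective with fiber $(\GL{R})^{c}$, and that $P'\to P$ is a $\pi_{0}$-isomorphism — makes explicit the points the paper treats as understood.
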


We conclude by discussing the Thom isomorphism in this setting.
Let $f \colon X \to B_{\boxtimes} (\GL{R})^{c}$ be a fibration of
$*$-modules and suppose that $X$ is cofibrant in $\aM_*$.  Now suppose
we are given an orientation in the form of a $(\GL{R})^{c}$-map 
\[
s \colon P'\to (\GL{R})^{c},
\]
corresponding to an $R$-module map 
\[
     a\colon Mf \to R.
\]

Consider the map
\[
\xymatrix{
X \boxtimes X \ar[r]^-{f\pi_2} & B_{\boxtimes} (({\GL{R}})^{c}),
}
\]
where here $\pi_2$ is the projection onto the second factor (induced
from the composite $X \boxtimes X \to * \boxtimes X$.  Passing to
pullbacks, we obtain the commutative diagram
\[
\xymatrix{
\tilde{P} \ar[r] \ar[d] & P \ar[r] \ar[d] &
E_{\boxtimes} ((\GL{R})^{c}) \ar[d] \\
X \boxtimes X \ar[r] & X \ar[r] & B_{\boxtimes} (({\GL{R}})^{c}).
}
\]
Since the map $P \boxtimes X \to X \boxtimes X$ induced from the map
$P \to X$ and the projection map $P \boxtimes X \to P$ are compatible
with the maps to $X$, the universal property of the pullback induces a
map $P \boxtimes X \to \tilde{P}$.  Passing to cofibrant replacements
as $(\GL{R})^{c}$-modules gives us a map between cofibrant-fibrant
$(\GL{R})^{c}$-modules; using $Q$ and the argument for
Theorem~\ref{t-pr-princ-bundle-obstr-thy}, we see that this map
represents the identity map on $QX \times QP$ in the homotopy category
of $Q ((\GL{R})^{c})$-spaces, and hence is a weak equivalence.

Let $P'$ denote a cofibrant replacement of $P$ as a
$(\GL{R})^{c}$-module.  Since $P'$ and $X \boxtimes P'$ are
cofibrant-fibrant objects, we can choose a map $P' \to X \boxtimes P'$
which lifts the homotopy class of the diagonal map $QP' \to QX \times
QP'$.  Passing to Thom spectra, we obtain the $R$-module Thom diagonal
map  
\[
M\xra{\Delta} (\Sigma^{\infty}_{\L+} X) \Smash M.
\]

Next, we form the composite 
\begin{equation}\label{eq:thomdiageq}
    M \xra{\Delta} (\Sigma^{\infty}_{\bL+} X) \Smash M \xra{1 \Smash a} (\Sigma^{\infty}_{\L+} X)
    \Smash R
\end{equation}
as in~\cite{MR609673}.  

To analyze this, we compose the orientation $s$ with the map $P' \to P' \boxtimes
X$ to obtain the composite map of $(\GL{R})^{c}$-modules 
\[
P' \to X \boxtimes P' \to X \boxtimes (\GL{R})^{c}.
\]
Now, applying the functor $(-) \smash_{\Sigma^{\infty}_{\L+}
  ((\GL{R})^{c})} R$ produces the Thom diagonal
equation~\eqref{eq:thomdiageq}.  On the other hand, since $s$
corresponds to a section of the map $P \to X$ induced by the universal
property of the pullback, this composite is a weak equivalence of
$(\GL{R})^{c}$-modules.  Since $(-) \smash_{\Sigma^{\infty}_{\L+}
  ((\GL{R})^{c})} R$ preserves weak equivalences of cofibrant
$(\GL{R})^{c}$-modules, we obtain the following proposition:

\begin{Proposition} \label{t-pr-thom-iso}
If $a\colon Mf\to R$ is an orientation, then the map of right $R$-modules
\[
     Mf \xra{\Delta} \Sigma^{\infty}_{\L+} X \Smash Mf \xra{1 \Smash
       a} \Sigma^{\infty}_{\L+} X \Smash R
\]
is a weak equivalence.
\end{Proposition}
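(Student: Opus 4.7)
The plan is to reduce the statement to a weak equivalence at the level of $(\GL{R})^{c}$-modules, and then apply the derived smash product functor $(-)\Smash_{\Sigma^{\infty}_{\L+}((\GL{R})^{c})}R$. Concretely, I would proceed as follows.

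First, I would translate the hypothesis that $a\colon Mf\to R$ is an orientation into the statement that, under the equivalence \eqref{eq:19}, $a$ corresponds to a $(\GL{R})^{c}$-module map $s\colon P'\to (\GL{R})^{c}$; equivalently, via Theorem~\ref{t-pr-princ-bundle-obstr-thy} and the pullback diagram \eqref{eq:72}, $s$ provides a section of the projection $P\to X$ up to coherent homotopy. Next, I would identify the Thom diagonal $\Delta\colon Mf\to \Sigma^{\infty}_{\L+}X\Smash Mf$ as the image under $(-)\Smash_{\Sigma^{\infty}_{\L+}((\GL{R})^{c})}R$ of a diagonal map $P'\to X\boxtimes P'$ in $(\GL{R})^{c}$-modules (with $X$ carrying the trivial $(\GL{R})^{c}$-action), exactly the map constructed just before equation \eqref{eq:thomdiageq}.

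The key reduction is then to observe that the composite of interest is obtained, after applying $(-)\Smash_{\Sigma^{\infty}_{\L+}((\GL{R})^{c})}R$, from the composite of $(\GL{R})^{c}$-module maps
\[
P'\longrightarrow X\boxtimes P'\xrightarrow{\;1\boxtimes s\;} X\boxtimes (\GL{R})^{c}.
\]
So it suffices to show that this composite is a weak equivalence of cofibrant $(\GL{R})^{c}$-modules; the proposition will then follow from the fact that, by construction, $(-)\Smash_{\Sigma^{\infty}_{\L+}((\GL{R})^{c})}R$ sends weak equivalences between cofibrant $\Sigma^{\infty}_{\L+}((\GL{R})^{c})$-modules to weak equivalences.

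The main obstacle is to verify that the composite $P'\to X\boxtimes (\GL{R})^{c}$ really is a weak equivalence. To handle this I would apply the rectification functor $Q$ from Theorem~\ref{thm:gcomp}, passing to $Q(\GL{R})^{c}$-spaces, where the square \eqref{eq:72} becomes a classical principal quasi-fibration by Theorem~\ref{t-EL-BL-quasi-fib}. There the section $s$ trivializes the principal $Q(\GL{R})^{c}$-bundle $QP\to QX$, so the composite $QP\to QX\times QP\xrightarrow{1\times Qs} QX\times Q(\GL{R})^{c}$ is visibly a weak equivalence (it is, up to homotopy, the tautological trivialization induced by the section, exactly as in the classical case). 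Pulling back along the Quillen equivalence of Theorem~\ref{thm:gcomp} gives the required weak equivalence in $\aM_{(\GL{R})^{c}}$, and invoking the preservation property of the derived smash product completes the proof.
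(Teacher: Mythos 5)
Your proposal is correct and follows essentially the same route as the paper: translate the orientation into a section $s\colon P'\to(\GL{R})^{c}$, identify the Thom diagonal with the composite $P'\to X\boxtimes P'\to X\boxtimes(\GL{R})^{c}$ under $(-)\Smash_{\Sigma^{\infty}_{\L+}((\GL{R})^{c})}R$, check that composite is a weak equivalence, and apply the fact that the derived smash preserves weak equivalences of cofibrant modules. Your use of $Q$ and Theorem~\ref{t-EL-BL-quasi-fib} to justify the middle step is exactly the rectification argument the paper appeals to.
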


\section{$\einfty$ Thom spectra and orientations}
\label{sec:thom-spectra}

In this section, we describe the construction and orientation of
$\einfty$ Thom spectra, generalizing the perspective of
Section~\ref{sec:ainfty-thom-spectrum}.  For an $\einfty$ ring spectrum
$R$, the space of units $\GL{R}$ can be delooped to form a spectrum of
units $\gl{R}$.  This is encoded in the basic adjunction
\begin{equation} \label{eq:5}
\xymatrix{ {\splus \linf\colon \Ho \CatOf{$(-1)$-connected spectra}}
\ar@<.3ex>[r] & {\Ho \einftyspectra \colon \glsym} \ar@<.3ex>[l] }
\end{equation}
which is proved in \S\ref{sec:constr-may-quinn}; see Theorem
\ref{t-th-units}.  Here 
$\einftyspectra$ denotes the model category of Lewis-May-Steinberger
$E_\infty$ ring spectra. 
In order to support the generalization to $R$-algebras, we model
$\einftyspectra$ via the Quillen equivalent model category $\aM_S[\bP]$ of
Elmendorf-Mandell-Kriz-May commutative $S$-algebras, the connective
spectra as a subcategory of $\aM_S$, and $\splus \linf$ as the composite
$\Sigma^{\infty}_{\bL+} \Omega^\infty_S$ (see Theorem \ref{thm:*-mod-stable}).

We begin by discussing the classical case of stable spherical
fibrations.  The counit of the adjunction above yields a map in $\Ho \aM_S[\bP] \cong \Ho
\einftyspectra$
\[
   \epsilon\colon \Sigma^{\infty}_{\bL+} \linf_S \gl{S} \to S.
\]
Assume we are given a map
\[
    \zeta\colon b\to b\gl{S},
\]
where we write $b\gl{S}$ for $\Sigma \gl{S}$.  
Let $j=\Sigma^{-1}\zeta$ and form the
diagram
\[
\xymatrix{ {g} \ar[r]^-{j} \ar[d] & {\gl{S}} \ar@{=}[r] \ar[d] &
{\gl{S}} \ar[d]
\\
{\ptspace} \ar[r] & {Cj} \ar[r] \ar[d] & {e\gl{S} \heq \ptspace}
\ar[d]
 \\
& {b} \ar[r] & {b\gl{S}} }
\]
by requiring that the upper left and bottom right squares are homotopy
Cartesian.  Note that we may also view $b$ as an infinite loop map
\[
     f\colon B \xra{} B\GL{S}.
\]

\begin{Definition}
The \emph{Thom spectrum} of $f$, or of
$\zeta$, or of $j$, is the homotopy pushout $M\zeta$ of the diagram in $\aM_S[\bP]$ 
\begin{equation} \label{or-eq:8}
\xymatrix{
\Sigma^\infty_{\bL+} \linf_S g \ar[r]^-{\Sigma^{\infty}_{\bL+} \linf_S j} \ar[d]^-{\Sigma^\infty_{\bL+} \linf_S \ptspace}
& \Sigma^\infty_{\bL+} \linf_S \gl{S} \ar[r]^-{\epsilon} \ar[r] \ar[d] & S \ar[d] \\ 
S \heq \Sigma^{\infty}_{\bL+} \linf_S \ptspace \ar[r] & \Sigma^{\infty}_{\bL+} \linf_S Cj \ar[r] & M\zeta,
}
\end{equation}
which is to say that
\[
       M\zeta \cong \Sigma^{\infty}_{\bL+} \linf_S Cj
         \Lsmash_{\Sigma^{\infty}_{\bL+} \linf_S \gl{S}} S \cong S \Lsmash_{\Sigma^\infty_{\bL+} \linf_S g} S.
\]
\end{Definition}

Note that the left-hand square in the diagram \eqref{or-eq:8} is a homotopy pushout by definition of $Cj$ and the fact that $\Sigma^\infty_{\bL+}\linf_S$ preserves homotopy pushouts.
Also note that when writing this homotopy pushout, we are suppressing the
choice of a point-set representative of the homotopy class $\epsilon$.
Since all objects are fibrant in the model structure on $\aM_S[\bP]$,
it suffices to choose a cofibrant model for $\linf_S \gl{S}$ (and
subsequently of $\linf_S g$) in the model structure on
$\aM_*[\bP]$~\cite[4.19]{Blumberg-Cohen-Schlichtkrull}.

Now suppose that $R$ is a commutative $S$-algebra with unit
$\iota\colon S\to R$; let $i=\gl{\iota}$, and let $k = ij\colon g\to
\gl{R},$ so that we have the solid arrows of the diagram
\begin{equation}  \label{eq:10}
\xymatrix{ {g} \ar[r]^-{j} \ar[dr]_{k} & {\gl{S}} \ar[r] \ar[d]_{i} &
{Cj} \ar@{-->}[dl]^{u}
\\
& {\gl{R},} }
\end{equation}
in which the row is a cofiber sequence.   The homotopy pushout diagram
\eqref{or-eq:8} and the adjunction \eqref{eq:5} give
the following. 

\begin{Theorem} \label{t-obstruction}
The derived mapping space $\Map_{\aM_S[\bP]}(M\zeta,R)$ is equivalent
to the fiber of the map of derived mapping spaces 
\[
     \Map_{\M_S}(Cj,\gl{R}) \to \Map_{\aM_S}(\gl{S},\gl{R}) 
\]
over the basepoint associated to the map $i: \gl{S}\to \gl{R}$.  That is, the map $k$ is the obstruction to
the existence of an $\einfty$ map $M\zeta \to R$, and
$\Map_{\aM_S[\bP]} (M\zeta,R)$ is weakly equivalent to the space of
lifts in the diagram \eqref{eq:10}.
\end{Theorem}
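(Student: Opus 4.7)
The plan is to apply the contravariant functor $\Map_{\aM_S[\bP]}(-,R)$ to the homotopy pushout square \eqref{or-eq:8} that defines $M\zeta$, and then invoke the derived adjunction $(\Sigma^{\infty}_{\bL+}\linf_S, \glsym)$ from Theorem \ref{t-th-units} to translate the result into a statement about $\gl{R}$-valued mapping spaces. Since the left-hand square of \eqref{or-eq:8} is a homotopy pushout (as $\Sigma^{\infty}_{\bL+}\linf_S$ is a left Quillen functor and carries the defining cofiber sequence $g\to\gl{S}\to Cj$ to a homotopy pushout), the pasting lemma shows that $M\zeta$ is also the homotopy pushout of the right-hand square
\[
\xymatrix{
\Sigma^{\infty}_{\bL+}\linf_S \gl{S} \ar[r]^-{\epsilon}\ar[d] & S \ar[d]\\
\Sigma^{\infty}_{\bL+}\linf_S Cj \ar[r] & M\zeta
}
\]
in $\aM_S[\bP]$. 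Applying $\Map_{\aM_S[\bP]}(-,R)$ to this square produces a homotopy pullback of derived mapping spaces.

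Next, I would use the fact that $S$ is the initial commutative $S$-algebra, so that $\Map_{\aM_S[\bP]}(S,R)$ is canonically weakly equivalent to a point, with distinguished basepoint the unit map $\iota\colon S\to R$. Combined with the adjunction of Theorem \ref{t-th-units}, which provides weak equivalences of derived mapping spaces
\[
\Map_{\aM_S[\bP]}(\Sigma^{\infty}_{\bL+}\linf_S Cj,\, R) \htp \Map_{\aM_S}(Cj,\gl{R}), \qquad \Map_{\aM_S[\bP]}(\Sigma^{\infty}_{\bL+}\linf_S \gl{S},\, R) \htp \Map_{\aM_S}(\gl{S},\gl{R}),
\]
this identifies $\Map_{\aM_S[\bP]}(M\zeta,R)$ with the homotopy fiber of $\Map_{\aM_S}(Cj,\gl{R}) \to \Map_{\aM_S}(\gl{S},\gl{R})$ over the image of $\iota$ under precomposition with $\epsilon$.

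The one step requiring care, and the likely main obstacle, is to check that the image of $\iota$ in $\Map_{\aM_S}(\gl{S},\gl{R})$ is precisely $i=\gl{\iota}$. This is a naturality-and-triangle-identity argument: the adjoint of $\iota\circ\epsilon_{\gl{S}}\colon \Sigma^{\infty}_{\bL+}\linf_S\gl{S}\to S\to R$ is, by naturality of the counit, $\gl{\iota}$ composed with the counit $\gl{\epsilon_{\gl{S}}}\circ \eta_{\gl{S}}$, which reduces to $\gl{\iota}=i$ by the triangle identity. Once this is verified, the homotopy fiber over $i$ is by definition the space of maps $u\colon Cj\to\gl{R}$ extending $i$ along $\gl{S}\to Cj$, that is, the space of lifts $u$ in the diagram \eqref{eq:10}; in particular, such a lift exists (and hence an $\einfty$ $S$-algebra map $M\zeta\to R$ exists) if and only if $k=ij$ is null-homotopic, completing the proof.
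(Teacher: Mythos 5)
Your proof is correct and fills in, in the natural way, the one-line argument the paper gives (``The homotopy pushout diagram \eqref{or-eq:8} and the adjunction \eqref{eq:5} give the following''): apply $\Map_{\aM_S[\bP]}(-,R)$ to the right-hand homotopy pushout square to get a homotopy pullback, use that $S$ is initial (hence cofibrant, hence the corresponding corner is contractible with basepoint $\iota$), and convert via the adjunction of Theorem \ref{t-th-units}. Your triangle-identity check that the basepoint of $\Map_{\aM_S}(\gl{S},\gl{R})$ is indeed $i=\gl{\iota}$ is exactly the detail worth spelling out, and it is correct.
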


We have the following $E_\i$ analogue of the usual Thom isomorphism:

\begin{Theorem}
If $\Map_{\aM_S[\bP]}(M\zeta,R)$ is non-empty (i.e. if $k$ is
homotopic to the trivial map $g\to\gl{R}$) then we have equivalences
of derived mapping spaces 
\[
\Map_{\aM_S[\bP]}(M\zeta,R)\heq\Omega\Map_{\aM_S}(g,\gl{R})\heq
\Map_{\aM_S}(b,\gl{R}) \heq \Map_{\aM_S[\bP]}(\Sigma^\infty_{\bL+}
B,R).
\]
\end{Theorem}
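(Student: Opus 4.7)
The plan is to chain together the three equivalences: the first via Theorem \ref{t-obstruction} and the cofiber sequence $g\xra{j}\gl{S}\to Cj$, the second via the identification $b\heq\Sigma g$ built into the defining diagram, and the third via the units adjunction \eqref{eq:5}.

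For the first equivalence, Theorem \ref{t-obstruction} identifies $\Map_{\aM_S[\bP]}(M\zeta,R)$ with the homotopy fiber over $i=\gl{\iota}$ of the restriction map
\[
r\colon \Map_{\aM_S}(Cj,\gl{R}) \longrightarrow \Map_{\aM_S}(\gl{S},\gl{R}).
\]
Applying $\Map_{\aM_S}(-,\gl{R})$ to the cofiber sequence $g\xra{j}\gl{S}\to Cj$ gives a fiber sequence of derived mapping spaces
\[
\Map_{\aM_S}(Cj,\gl{R}) \xra{r} \Map_{\aM_S}(\gl{S},\gl{R}) \xra{j^{*}} \Map_{\aM_S}(g,\gl{R}),
\]
so that the fiber of $r$ over $i$ is naturally identified with the space of null-homotopies of $j^{*}(i)=k$. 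Under the hypothesis that $k$ is null-homotopic this space is non-empty and equivalent to $\Omega\Map_{\aM_S}(g,\gl{R})$; the identification uses either a chosen null-homotopy of $k$, or, more conceptually, the fact that $\Map_{\aM_S}(-,\gl{R})$ takes values in infinite loop spaces, so translation by $i$ identifies the fiber over $i$ with the fiber over the basepoint.

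For the second equivalence, the defining diagram produces two fiber sequences with $Cj$ as middle term: $g\to\gl{S}\to Cj$ from the upper-left pullback square, and $\gl{S}\to Cj\to b$ obtained by rotating the fiber sequence $Cj\to b\to b\gl{S}$ given by the lower-right pullback square. Both exhibit their right-hand terms as the cofiber of $\gl{S}\to Cj$, so $b\heq\Sigma g$. Since $\gl{R}$ is a spectrum, $\Map_{\aM_S}(-,\gl{R})$ converts $\Sigma$ to $\Omega$, yielding
\[
\Map_{\aM_S}(b,\gl{R})\heq\Map_{\aM_S}(\Sigma g,\gl{R})\heq\Omega\Map_{\aM_S}(g,\gl{R}).
\]
The third equivalence is a direct application of the derived form of Theorem \ref{t-th-units}: since $b$ is $(-1)$-connected with $0$-space $B$, and $\splus\linf$ is left adjoint to $\glsym$ on the homotopy categories of connective spectra and $\einfty$ ring spectra, we obtain
\[
\Map_{\aM_S[\bP]}(\Sigma^{\infty}_{\bL+} B,R)\heq\Map_{\aM_S}(b,\gl{R}).
\]

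The main obstacle is the basepoint translation in the first equivalence: the fiber sequence directly produces the fiber of $r$ over $0$, which is $\Omega\Map_{\aM_S}(g,\gl{R})$, but we need the fiber over $i$. The hypothesis that $k$ is null-homotopic is exactly what makes this translation valid, and constructing a natural comparison (rather than merely a weak equivalence of abstract homotopy types) requires care---either using the group structure provided by the infinite loop space $\Map_{\aM_S}(\gl{S},\gl{R})$, or using an explicit null-homotopy of $k$ to define the comparison map.
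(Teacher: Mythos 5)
Your proof is correct, and it is essentially the argument the authors intended: the theorem is stated immediately after Theorem \ref{t-obstruction} with no separate proof precisely because the three equivalences follow in the way you describe, by (i) reading off the fiber of $r$ as the space of null-homotopies of $k$ from the long exact fiber sequence induced by $g \xra{j} \gl{S} \to Cj$, (ii) using the identification $b \heq \Sigma g$ (which is actually immediate from $j = \Sigma^{-1}\zeta$, so your route through the two cofiber sequences is a slight detour, though not wrong), and (iii) applying the adjunction of Theorem \ref{t-th-units}. Your closing remark on basepoint translation is an appropriate acknowledgment of the only genuine subtlety; since $\Map_{\aM_S}(\gl{S},\gl{R})$ and $\Map_{\aM_S}(g,\gl{R})$ are infinite loop spaces, the fiber of $r$ over $i$ is equivalent to the fiber over the basepoint once $k$ is known to be null, and this is all the theorem asserts.
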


More generally, suppose that $R$ is a commutative $S$-algebra.  There
is a category $\aM_R[\bP]$ of commutative $R$-algebras; the functor $R
\smash_S (-)$ is the left adjoint of a Quillen pair connecting
$\aM_R[\bP]$ and $\aM_S[\bP]$ (with right adjoint the forgetful
functor).  Therefore, we can consider the homotopical adjunction $(R 
\smash \Sigma^\infty_{\bL+} \Omega_S U, \gl U)$ connecting $\aM_R$ and
$\aM_R[\bP]$, where here $U$ denotes both the forgetful functor $\aM_R
\to \aM_S$ and $\aM_R[\bP] \to \aM_S[\bP]$ respectively.  In further
abuse of notation, we will suppress $U$ and write $\gl{R}$ for
$\gl{UR}$ and $\Sigma^\infty_{\bL+} \Omega_S$ for
$\Sigma^{\infty}_{\bL+} \Omega_S U$.

Now given a map
\[
   \zeta\colon b \to b\gl{R},
\]
we obtain a map of cofiber sequences
\[
\xymatrix{
g\ar[r]\ar[d] & \gl{R}\ar[d]\\
{\gl{R}} \ar@{=}[r] \ar[d] & {\gl{R}} \ar[d]
 \\
{p} \ar[r] \ar[d] \ar@{-->}[ur] & {e\gl{R}\heq \ptspace } \ar[d]
 \\
{b} \ar[r]^-{\zeta} \ar@{-->}[ur] & {b\gl{R}.}  }
\]
in which $g=\Sigma^{-1} b$ and $p$ is the fiber of $b\to b\gl{R}$.

\begin{Definition} \label{def-4}
The \emph{$R$-algebra Thom spectrum} of $\zeta$ is the commutative
$R$-algebra $M\zeta$ defined as the homotopy pushout in
$\aM_R[\bP]$ of the diagram
\[
\xymatrix{
R \smash \Sigma^{\infty}_{\bL+} \linf_S g \ar[r] \ar[d] &  R \smash
\Sigma^{\infty}_{\bL+} \linf_S \gl{R} \ar[d] \ar[r] & R \ar[d] \\ 
R\smash \Sigma^{\infty}_{\bL+} \linf_S \ast \ar[r] & R\smash
\Sigma^{\infty}_{\bL+} \linf_S p \ar[r] & M\zeta.
}
\]
\end{Definition}
Again, note that the left-hand square is automatically a homotopy pushout in $\aM_R[\bP]$, which means that $M\zeta$ can be taken to be the homotopy pushout of the right-hand square or of the composite square.

The Thom $R$-algebra is a generalization of the Thom $R$-module of Definition~\ref{def-thom-ainfty}:

\begin{Lemma}
The underlying $R$-module of the $R$-algebra Thom spectrum
of $\zeta$ is weakly equivalent to the $A_\infty$ Thom spectrum of
$\Omega^\infty \zeta$.
\end{Lemma}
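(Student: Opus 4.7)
The plan is to compute the underlying $R$-module of the homotopy pushout defining $M\zeta$ directly, and match it with the formula from Definition~\ref{def-thom-ainfty}.

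First, since homotopy pushouts in the category $\aM_R[\bP]$ of commutative $R$-algebras compute the derived smash product on underlying $R$-modules, forgetting the algebra structure in the right-hand square of Definition~\ref{def-4} yields
\[
M\zeta \;\simeq\; \bigl(R\smash \Sigma^{\infty}_{\bL+}\linf_S p\bigr) \Lsmash_{R\smash \Sigma^{\infty}_{\bL+}\linf_S \gl{R}} R
\]
as $R$-modules, where $R$ acquires its module structure via the $R$-linear extension of the counit of the adjunction~\eqref{eq:5}.

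Second, I would reduce to a smash product over $\Sigma^{\infty}_{\bL+}\linf_S \gl{R}$ itself. For any $S$-algebra $A$ equipped with an $S$-algebra map $\epsilon\colon A\to R$, any right $A$-module $N$, and $R$ viewed as a left $A$-module via $\epsilon$, there is a natural equivalence
\[
(R\smash N) \Lsmash_{R\smash A} R \;\simeq\; N \Lsmash_A R.
\]
This is immediate from the derived two-sided bar construction: iterated $R$-relative smash products of $R\smash A$ collapse to $R\smash A^{\smash n}$, so both simplicial objects agree at level $n$ with $R\smash N\smash A^{\smash n}$, and the face and degeneracy maps coincide. Applied with $A = \Sigma^{\infty}_{\bL+}\linf_S \gl R$ and $N = \Sigma^{\infty}_{\bL+}\linf_S p$, this gives
\[
M\zeta \;\simeq\; \Sigma^{\infty}_{\bL+}\linf_S p \Lsmash_{\Sigma^{\infty}_{\bL+}\linf_S \gl R} R.
\]

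Third, I would identify this expression with the $A_\infty$ Thom spectrum of Definition~\ref{def-thom-ainfty}. By construction of the connective spectrum $\gl R$ in Theorem~\ref{t-th-units}, the $\infty$-loop space $\linf_S \gl R$ is weakly equivalent in $\aM_*[\bT]$ to a cofibrant model $(\GL R)^c$. Since $\linf_S$ is a right Quillen adjoint (Theorem~\ref{thm:*-mod-stable}), it preserves homotopy pullbacks, so $\linf_S p$ is naturally equivalent as a right $\linf_S \gl R$-module to the pullback $P$ from Definition~\ref{def-thom-ainfty} (hence, after further cofibrant replacement, to $P'$). Moreover, the counit $\Sigma^{\infty}_{\bL+}\linf_S \gl R \to R$ from~\eqref{eq:5} matches, under these identifications, the $A_\infty$ counit $\Sigma^{\infty}_{\bL+}(\GL R)^c \to R$ of~\eqref{eq:Lainfadj}, since the latter adjunction sits underneath the former. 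Substituting then gives
\[
M\zeta \;\simeq\; \Sigma^{\infty}_{\bL+} P' \Lsmash_{\Sigma^{\infty}_{\bL+}(\GL R)^c} R \;=\; M(\Omega^\infty\zeta).
\]

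The main obstacle is the compatibility check in the third step: one must verify that the counit of the $E_\infty$ adjunction~\eqref{eq:5}, when restricted along the forgetful functor from $E_\infty$ to $A_\infty$ structures, recovers the $A_\infty$ counit of~\eqref{eq:Lainfadj}, and that the resulting $\linf_S \gl R$-action on $\linf_S p$ matches the $(\GL R)^c$-action on $P'$. These are formal consequences of the naturality of the nested adjunctions in the construction of $\gl R$, but carefully unpacking them requires tracing through the definitions of Sections~\ref{sec:space-units-derived} and~\ref{sec:ainfty-thom-spectrum} together with Theorem~\ref{t-th-units}.
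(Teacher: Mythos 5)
Your proof is correct and follows essentially the same route as the paper's: the paper simply observes that the homotopy pushout in $\aM_R[\bP]$ computes the derived smash product $B \Lsmash_A C$ and declares the rest to be ``a check of the definitions,'' and your steps 2--3 (the base-change identification $(R\smash N)\Lsmash_{R\smash A}R \simeq N\Lsmash_A R$ and the matching of $\linf_S p$ with $P'$ and of the two counits) are precisely that check carried out explicitly.
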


\begin{proof}
This follows from a check of the definitions given the fact that the
homotopy pushout
\[
\xymatrix
{
B & A \ar[r] \ar[l] & C \\
}
\]
in the category $\aM_R[\bP]$ is naturally weakly equivalent to the
derived smash product $B \Lsmash_{A} C$~\cite[\S VII.1.6]{EKMM}.
\end{proof}

\begin{Theorem} \label{t-obstruction-R}
Let $A$ be a commutative $R$-algebra, and write
$$
i\colon \gl{R}\to\gl{A}
$$
for the induced map on unit spectra.  The derived mapping space
$\Map_{\aM_R[\bP]}(M\zeta,A)$ is weakly equivalent to the fiber in the
map of derived mapping spaces
\begin{equation}
\Map_{\aM_S} (p,\gl{A}) \to \Map_{\aM_S} (\gl{R},\gl{A})
\end{equation}
at the basepoint associated to the map $i$. 
\end{Theorem}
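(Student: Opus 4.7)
\bigskip

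\noindent\textbf{Proof proposal.} The plan is to mimic the argument for Theorem~\ref{t-obstruction}, replacing the sphere spectrum $S$ with the commutative $S$-algebra $R$ and using the relative adjunction described just before Definition~\ref{def-4}. Concretely, Definition~\ref{def-4} presents $M\zeta$ as a homotopy pushout in $\aM_R[\bP]$ of the diagram
\[
R \smash \Sigma^{\infty}_{\bL+}\linf_S\ast \;\longleftarrow\; R \smash \Sigma^{\infty}_{\bL+}\linf_S \gl{R} \;\longrightarrow\; R,
\]
where the right-hand map is obtained from the $R$-algebra structure via the counit of the adjunction~\eqref{eq:5} followed by $R\smash_S(-)$, and the left-hand map is induced from the map $\gl R \to p$ (so that after passing to $R\wedge\Sigma^{\infty}_{\bL+}\linf_S(-)$ and taking the pushout one recovers $R\wedge\Sigma^\infty_{\bL+}\linf_S p$, which is the intermediate expression in Definition~\ref{def-4}). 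I would first apply the contravariant functor $\Map_{\aM_R[\bP]}(-,A)$ to this homotopy pushout, which converts it into a homotopy pullback of derived mapping spaces.

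Next I would simplify each of the three resulting mapping spaces using the Quillen adjunction $(R\smash\Sigma^{\infty}_{\bL+}\linf_S,\ \glsym U)$ between $\aM_S$ and $\aM_R[\bP]$ recorded before Definition~\ref{def-4}. Since $R$ is the initial object of $\aM_R[\bP]$, one has $\Map_{\aM_R[\bP]}(R,A)\heq\ast$. The adjunction gives
\[
\Map_{\aM_R[\bP]}(R\smash\Sigma^{\infty}_{\bL+}\linf_S X,A) \;\heq\; \Map_{\aM_S}(X,\gl{A})
\]
for any spectrum $X$; applying this with $X=\gl{R}$ and $X=p$ identifies the remaining two terms with $\Map_{\aM_S}(\gl{R},\gl{A})$ and $\Map_{\aM_S}(p,\gl{A})$ respectively. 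Assembling these identifications produces a natural equivalence
\[
\Map_{\aM_R[\bP]}(M\zeta,A)\;\heq\;\ast\times^{h}_{\Map_{\aM_S}(\gl{R},\gl{A})}\Map_{\aM_S}(p,\gl{A}),
\]
which is exactly the homotopy fiber of $\Map_{\aM_S}(p,\gl{A})\to\Map_{\aM_S}(\gl{R},\gl{A})$ over the appropriate basepoint.

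The main obstacle, and the only point that requires real care, is checking that the basepoint of $\Map_{\aM_S}(\gl{R},\gl{A})$ singled out by this identification is the map $i=\gl{\varphi}$ induced by the $R$-algebra structure map $\varphi\colon R\to A$. This amounts to tracing the counit $\Sigma^{\infty}_{\bL+}\linf_S\gl{R}\to R$ of the adjunction~\eqref{eq:5} through the composite $R\smash\Sigma^{\infty}_{\bL+}\linf_S\gl{R}\to R\xrightarrow{\varphi}A$ and observing, by naturality of the adjunction together with the fact that $\gl{}$ is a functor of commutative $S$-algebras, that its adjoint in $\Map_{\aM_S}(\gl{R},\gl{A})$ is precisely $i$. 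Once this is verified, the stated weak equivalence with the fiber follows immediately; the cofibrancy conditions needed to make each of the above equivalences derived are already guaranteed by the model structure on $\aM_R[\bP]$ and the observation in Definition~\ref{def-4} that the left-hand square is automatically a homotopy pushout.
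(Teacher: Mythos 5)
Your argument is correct and follows the same route the paper implicitly takes: apply the contravariant functor $\Map_{\aM_R[\bP]}(-,A)$ to the homotopy pushout square defining $M\zeta$, convert it to a homotopy pullback of derived mapping spaces, and use the adjunction $R\smash\Sigma^{\infty}_{\bL+}\linf_{S}\dashv\glsym$ together with the fact that $R$ is initial in $\aM_R[\bP]$ to identify the result as the stated homotopy fiber; the identification of the basepoint with $i$ via naturality of the adjunction is the right closing check. One small correction: the first displayed diagram in your proposal should read
\[
R \smash \Sigma^{\infty}_{\bL+}\linf_S p \;\longleftarrow\; R \smash \Sigma^{\infty}_{\bL+}\linf_S \gl{R} \;\longrightarrow\; R
\]
with $p$ in the left-hand corner rather than $\ast$ (which is the inner, not the outer, corner of the composite square in Definition~\ref{def-4}); your subsequent steps clearly use $p$, so this is just a typo, but as written the displayed pushout would not be $M\zeta$.
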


Taking $A=R$, we see that the space of $R$-algebra orientations of
$M\zeta$ is the space of lifts
\[
\xymatrix{ & {e\gl{R}} \ar[d]
\\
{b} \ar[r]_-{\zeta} \ar@{-->}[ur] & {b\gl{R}.}  }
\]
In this form the obstruction theory is a generalization of the
obstruction theory for orientations of $\ainfty$ ring spectra in
Theorem \ref{t-th-orientation-ainfty}.

To make contact with the classical situation, let $S$ be the sphere 
spectrum, and suppose we are given a map
\[
  g\colon b\to b\gl{S},
\]
so that $\linf g$ classifies a stable spherical fibration.  

Now suppose that $R$ is a commutative $S$-algebra with unit
$\iota\colon S\to R$, and let
\[
  f = b\gl{\iota}\circ  g\colon b \to b\gl{S} \to b\gl{R}.
\]
Then
\[
    Mf \heq Mg \Lsmash R,
\]
and so extension of scalars induces an equivalence of derived mapping
spaces
\[
    \Map_{\aM_S[\bP]} (Mg,R) \heq \Map_{\aM_R[\bP]} (Mf,R).
\]
If we let $b (S,R)$ be the homotopy pullback in the solid diagram
\begin{equation} \label{eq:34}
\xymatrix{ {p} \ar[r] \ar[d] & {b (S,R)} \ar[r] \ar[d] & {\ptspace}
\ar[d]
\\
{b} \ar[r] \ar@{-->}[ur] & {b\gl{S}} \ar[r] & {b\gl{R},} }
\end{equation}
then Theorem~\ref{t-obstruction-R} specializes to a result of May,
Quinn, Ray, and Tornehave \cite{MQRT:ersers}.

\begin{Corollary}\label{t-thom-iso-mqr}
The derived space of $\einfty$ maps $Mg\to R$ is weakly equivalent to
the derived space of lifts in the diagram \eqref{eq:34}.
\end{Corollary}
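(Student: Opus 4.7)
The plan is to deduce the corollary by chaining together three equivalences that are essentially already in hand by this point in the paper. First, I would use the extension-of-scalars equivalence
$$\Map_{\aM_S[\bP]} (Mg, R) \heq \Map_{\aM_R[\bP]} (Mf, R)$$
noted immediately before the corollary, which reduces the problem to computing $R$-algebra maps out of $Mf$. This equivalence rests on the identification $Mf \heq Mg \Lsmash R$ together with the standard Quillen adjunction between $\aM_S[\bP]$ and $\aM_R[\bP]$ given by $R \smash_S (-)$ and the forgetful functor.

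Second, I would apply Theorem \ref{t-obstruction-R} with $A = R$ and $i\colon \gl{R} \to \gl{R}$ the identity. That theorem identifies $\Map_{\aM_R[\bP]}(Mf,R)$ with the homotopy fiber over $\id_{\gl{R}}$ of the restriction map $\Map_{\aM_S}(p, \gl{R}) \to \Map_{\aM_S}(\gl{R}, \gl{R})$ induced by the fiber inclusion $\gl{R} \to p$. As noted in the text immediately following Theorem \ref{t-obstruction-R}, this fiber is weakly equivalent to the derived space of lifts of $f\colon b \to b\gl{R}$ along $e\gl{R} \to b\gl{R}$; concretely, extending $\id_{\gl{R}}$ along $\gl{R}\to p$ amounts to producing a section of $p\to b$, which is the same data as a lift of $f$ through $e\gl{R}$.

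Third, since $e\gl{R} \heq \ast$, such a lift is the same as a null-homotopy of $f = b\gl{\iota} \circ g$. The homotopy pullback square defining $b(S,R)$ in diagram \eqref{eq:34} expresses $b(S,R)$ as $b\gl{S} \times^{h}_{b\gl{R}} \ast$, so by its universal property the derived space of null-homotopies of $b\gl{\iota} \circ g$ is equivalent to the derived space of lifts of $g$ along $b(S,R) \to b\gl{S}$, which is exactly the space of lifts appearing in \eqref{eq:34}. Composing the three equivalences then yields the corollary.

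The substantive input is Theorem \ref{t-obstruction-R}; the remaining steps are formal manipulations using adjunctions and universal properties. The main place to be careful is in checking that the basepoint and fiber-sequence identifications in the second step genuinely match up with the null-homotopy picture used in the third step, and in ensuring that appropriate cofibrant/fibrant replacements have been made so that every mapping space in sight is genuinely derived. Both of these should be routine given the model-categorical framework established in Section \ref{sec:thom-spectra}.
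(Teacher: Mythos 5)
Your proposal is correct and follows essentially the same approach the paper intends: the paper simply asserts that Theorem \ref{t-obstruction-R} "specializes" to Corollary \ref{t-thom-iso-mqr}, and your three-step chain (extension of scalars, Theorem \ref{t-obstruction-R} with $A=R$, then the universal property of the homotopy pullback defining $b(S,R)$) is precisely the verification left implicit there. The only point worth flagging for a cleaner writeup is to note explicitly that the $p$ appearing in Theorem \ref{t-obstruction-R} (the fiber of $\zeta=f\colon b\to b\gl{R}$) agrees with the $p$ in diagram \eqref{eq:34} because both outer squares there are homotopy Cartesian; once that is said, your identification of the lift space is routine.
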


\section{Units after May-Quinn-Ray}

\label{sec:constr-may-quinn}

Our construction of the Thom spectrum in
\S\ref{sec:ainfty-thom-spectrum} uses a model for the
adjunction   
\[
\xymatrix{{\CatOf{group-like $A_{\infty}$ spaces}} \ar@<.5ex>[r] &
{\ainftyspaces} \ar@<.5ex>[r]^-{\splus} \ar@<.5ex>[l]^-{\GLsym} &
{\CatOf{$\ainfty$ ring spectra} \colon \GLsym,} \ar@<.5ex>[l]^-{\linf} }
\]
which is a homotopical refinement of the standard adjunction 
\[
\xymatrix{ {\Z\colon \CatOf{groups}} \ar@<.3ex>[r] & {\CatOf{rings}.}  \ar@<.3ex>[l] }
\]
For the $\einfty$ case we use the $\einfty$ analog, 
\[
\xymatrix{{\CatOf{group-like $E_{\infty}$ spaces}} \ar@<.5ex>[r] &
{\CatOf{$\einfty$ spaces}} \ar@<.5ex>[r]^-{\splus} \ar@<.5ex>[l]^-{\GLsym} &
{\CatOf{$\einfty$ ring spectra}\colon \GLsym,} \ar@<.5ex>[l]^-{\linf} }
\]
which is modeled on the analogous adjunction 
\[
\xymatrix{ {\Z\colon \CatOf{abelian groups}} \ar@<.3ex>[r] &
{\CatOf{commutative rings}.}  \ar@<.3ex>[l] } 
\]
When $A$ is an $\einfty$ ring spectrum, $\GL{A}$ is a group-like
$\einfty$ space.  Since group-like $\einfty$ spaces model connective
spectra, it follows that there is a spectrum $\gl{A}$ such that 
\begin{equation} \label{eq:16}
     \linf \gl{A}\heq \GL{A}.
\end{equation}

In this section, we give a precise model of the adjunction and combine 
with a modernized version of the delooping result to prove the
following theorem:

\begin{Theorem} \label{t-th-units}
The functors $\splus \linf$ and $\glsym$ induce adjunctions
\begin{equation}
\xymatrix{ {\splus \linf\colon \Ho \CatOf{$(-1)$-connected spectra}}
\ar@<.3ex>[r] & {\Ho \einftyspectra\colon \glsym} \ar@<.3ex>[l] }
\end{equation}
of categories enriched over the homotopy category of spaces.
\end{Theorem}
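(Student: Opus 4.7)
The plan is to factor the asserted adjunction as a composite of three enriched adjunctions, each of which is either classical or an easy consequence of the constructions reviewed earlier, and then to compose them. Schematically, I would exhibit the chain
\[
\xymatrix@C=4em{
{\Ho\,\cnspectra} \ar@<.4ex>[r]^-{\linf_{\scr{I}}} &
{\Ho\,\CatOf{group-like $\einfty$ spaces}} \ar@<.4ex>[l]^-{\bL B} \ar@<.4ex>[r]^-{\text{incl}} &
{\Ho\,\CatOf{$\einfty$ spaces}} \ar@<.4ex>[l]^-{\GLsym} \ar@<.4ex>[r]^-{\splus} &
{\Ho\,\einftyspectra\colon\linf} \ar@<.4ex>[l]
}
\]
so that the composite of the top (left) arrows is $\splus\linf$ and the composite of the bottom (right) arrows is $\glsym$; since compositions of enriched adjoint pairs are again enriched adjoint pairs, this will give the theorem.

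First, I would give a careful treatment of the recognition principle, producing an enriched Quillen equivalence between connective spectra and group-like $\einfty$-spaces, with the right adjoint given by the restriction of $\linf$ and the left adjoint given by an infinite loop space machine (May's $B$-construction, or equivalently the group-completion spectrum associated to an $\einfty$ monoid). The output is a natural zig-zag of weak equivalences $\linf\,\bL B X \heq X^{gp}$ for group-like $X$, and the standard adjunction isomorphism on homotopy classes $[\bL B X, E]_{\ho\spectra}\iso[X,\linf E]_{\ho \einftyspaces}$, enriched over $\ho\spaces$ via the topological enrichment of the point-set categories. I would use the model for $\einfty$ spaces provided by commutative monoids in $\aM_*$, which allows me to quote the $(\Sigma^\infty_{\bL+},\linf_S)$-adjunction of Theorem~\ref{thm:*-mod-stable} at the level of $\einfty$ structures.

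Second, for the inclusion $\CatOf{group-like $\einfty$ spaces}\hookrightarrow\CatOf{$\einfty$ spaces}$ and its right adjoint $\GLsym$, I would check that $\GLsym$ is given by the pullback construction of Definition~\ref{def:gl1}, hence preserves weak equivalences (the right vertical map in that pullback is a fibration), and that the unit and counit isomorphisms hold on the nose at the point-set level. The enrichment of this adjunction over spaces is immediate from the pullback description and the fact that $\GL{A}$ is a union of components of $\linf A$. Third, for the $(\splus,\linf)$-adjunction between $\einfty$ spaces and $\einfty$ ring spectra I would combine Theorem~\ref{thm:*-mod-stable} with the lift to commutative monoids on both sides (standard for a strong symmetric monoidal Quillen left adjoint). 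Composing the three enriched adjunctions and using the equivalence~\eqref{eq:16} identifies the right adjoint of the composite with $\glsym$ and the left adjoint with $\splus\linf$, yielding the statement.

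The main obstacle is the first step: setting up the recognition principle in a way that actually produces an adjunction enriched over $\ho\spaces$ between the appropriate homotopy categories, rather than merely an equivalence of underlying categories. The classical presentations in \cite{MR0339152} give the equivalence at the level of homotopy categories, but one needs to be careful that the adjunction isomorphism is natural as a map of mapping spaces (up to homotopy), and that it behaves well with respect to the later passage to commutative monoids in $\aM_*$. To handle this I would work throughout with cofibrant--fibrant replacements in suitable model structures on $\aM_*[\bP]$ and on symmetric spectra of simplicial sets (or on $\aM_S[\bP]$), use that $Q$ rectifies $\einfty$ structures, and use the unit/counit of the recognition equivalence to produce the required enriched adjunction isomorphism.
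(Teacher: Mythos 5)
Your decomposition into three adjunctions is exactly the one the paper uses: the "recognition" equivalence between group-like $\einfty$ spaces and $(-1)$-connected spectra, the coreflection $(\text{incl},\GLsym)$ onto group-like objects, and the $(\splus,\linf)$ adjunction between $\einfty$ spaces and $\einfty$ ring spectra; composing and using that the first is an equivalence of enriched homotopy categories gives the theorem.  The one place where you diverge from the paper is in the choice of point-set model for $\einfty$ spaces, and it is not an idle choice.  You propose to work with commutative monoids in $\aM_*$, i.e.\ algebras over the linear isometries operad $\Lin$ (plus rectification via $Q$); the paper instead works with $C$-spaces for a unital $\einfty$ operad $C = |\SingTxt(\mathcal{C}\times\Lin)|$ built from the infinite little cubes operad.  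The reason is that in order to even \emph{define} the right Quillen functor $\Omega^f$ from spectra to group-like $C$-spaces (so that $\linf$ of a spectrum naturally carries a $C$-action), one needs a map of monads $\ptC\to\linf\sinf$, which the little cubes factor supplies but $\Lin$ alone does not.  This is exactly the point you flag as "the main obstacle," and it is where most of the paper's technical effort goes: Theorem \ref{t-th-einfty-spaces-spectra} produces an enriched connective Quillen equivalence under explicit hypotheses on $C$ (non-degenerate basepoint for $C(1)$, $\Sigma_n$-CW homotopy type for $C(n)$), and the operad $C$ is engineered precisely to satisfy these and to admit both the map to $\Lin$ (needed for the $(\splus,\linf)$ step via Proposition \ref{t-pr-l-spaces-l-spectra}) and the map of monads to $\linf\sinf$.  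Your proposed fix — cofibrant-fibrant replacements plus $Q$-rectification — is plausible but leaves this compatibility unaddressed; in effect you would need to carry out the same operad comparison the paper performs, or replace the recognition-principle step with a reference that already produces it in the $\aM_*$ setting.
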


Note that the construction of this adjunction realizes the left
adjoint as a composite of left Quillen adjoints and Quillen
equivalences and the right adjoint as a composite of right Quillen
adjoints and Quillen equivalences.  As a consequence, the left adjoint
preserves homotopy colimits and the right adjoint preserves homotopy
limits.

\begin{Remark}
In fact, Theorem~\ref{t-th-units} can be formulated as an adjunction
of $\i$-categories 
\[
\xymatrix{ {\splus \linf\colon \CatOf{$(-1)$-connected spectra}}
\ar@<.3ex>[r] & {\einftyspectra\colon \glsym} \ar@<.3ex>[l] }.
\]
See the companion paper~\cite{ABGHR} and the subsequent
paper~\cite{ABG} for a description of such an approach to the Thom
spectrum functor.
\end{Remark}

Throughout this section, we work in the classical categories
$\spectra$ of Lewis-May-Steinberger spectra~\cite{LMS} and
$\einftyspectra$ of $E_\infty$ ring spectra.  As we noted in
Section~\ref{sec:thom-spectra}, it is often useful to restate this
adjunction using modern models for these homotopy categories.
Since composition with an equivalence of categories preserves the
property of being a left or right adjoint, such a shift is harmless.

The reader will notice that a proof of Theorem~\ref{t-th-units} can
mostly be assembled from results scattered in the literature, particularly
\cite{May:gils,MR0339152,MQRT:ersers,LMS,EKMM}.  We wrote this section 
in order to consolidate this material and in order to present
modernized treatments using the language of model categories.  

\begin{Remark}
We note that May has prepared a review of the relevant
multiplicative infinite loop space theory \cite{May:rant} which also
includes the results we need.
\end{Remark}

\subsection{$\einfty$ spectra} \label{sec:einfty-spectra}

In this section we review the notion of a $C$-spectrum, where $C$ is
an operad (in spaces) over the linear isometries operad.  We also
recall the fact that the homotopy category of $\einfty$ spectra is
well defined, in the sense that if $C$ and $D$ are two $\einfty$
operads over the linear isometries operad, then the categories of
$C$-spectra and $D$-spectra are connected by a zig-zag of continuous
Quillen equivalences.

If $C$ is an operad, then for $k\geq 0$ we write $C (k)$ for the
$k^\text{th}$ space of the operad.   We also write $C$ for the
associated monad.  Let $\spectra = \spectra_{\universe{U}}$ denote the
category of spectra based on a universe $\universe{U}$, in the sense
of \cite{LMS}.  Let $\Lin$ denote the linear isometries operad of
$\universe{U}$, and let $C\to \Lin$ be an operad over $\Lin$.  Then
\[
     CV = \bigvee_{k\geq 0} C (k)\ltimes_{\Sigma_{k}} V^{\wedge k}.
\]
is the free $C$-algebra on $V$.  We write $\alg{C}{\spectra}$
% I like this notation, but I thought it was standard to write $S^T$ for the category of $T$-algebras in $S$
for the category of $C$-algebras in $\spectra$, and we call its
objects $C$-spectra.

In general $C (\ptspace) \iso \splus C (0)$ is the initial object of
the category of $C$-spectra.  We shall say that $C$ is \emph{unital}
if $C (0) = \ptspace,$ so that $C (0)\iso S$ is the sphere spectrum.

Lewis-May-Steinberger work with unital operads and the free
$C$-spectrum with prescribed unit.  If $S\to V$ is a spectrum under
the sphere, then we write $\ptC V$ for the free $C$ spectrum
on $V$ with unit $\iota\colon S\to V \to \ptC V.$ This is the pushout in
the category of $C$-spectra in the diagram
\begin{equation} \label{eq:41}
\xymatrix{
CS \ar[r] \ar[d]^{C \iota} & \ar[d] C (\ptspace )  = S \\
CV \ar[r] & \ptC V.
}
\end{equation}
By construction, $\ptC$ participates in a monad on the category
$\spectra_{S/}$ of spectra under the sphere spectrum.

As explained in \cite[II, Remark 4.9]{EKMM},
\[
    \Svee (V) = S \vee V
\]
defines a monad on $\spectra,$ using the fold map $S \vee S \to S$, 
and we have an equivalence of categories
\[
   \spectra_{S/} \iso \alg{\Svee}{\spectra}.
\]
It follows that there is a natural isomorphism
\begin{equation}\label{eq:44}
       C (V) \iso C_{*} \Svee (V)
\end{equation}
and (\cite[II, Lemma 6.1]{EKMM}) an equivalence of categories
\[
   \alg{C}{\spectra} \iso \alg{C_{*}}{\spectra_{S/}}.
\]

We recall the following, which can be proved easily using the argument
of \cite{EKMM,MR1806878}, in particular an adaptation of the
``Cofibration Hypothesis'' of \S{VII} of \cite{EKMM}.

\begin{Proposition} \label{t-pr-C-spectra-level-model}
The category of $C$-spectra has the structure of a cofibrantly
generated topological model category, in which the forgetful
functor to $\spectra$ creates fibrations and weak equivalences.  If
$\{A\to B \}$ is a set of generating (trivial) cofibrations of
$\spectra$, then $\{CA\to CB \}$ is a set of generating (trivial)
cofibrations of $\alg{C}{\spectra}$.
\end{Proposition}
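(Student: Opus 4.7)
The plan is to apply the standard transfer principle for cofibrantly generated model structures along the free--forgetful adjunction $(C, U)\colon \spectra \rightleftarrows \alg{C}{\spectra}$. Declare a map in $\alg{C}{\spectra}$ to be a weak equivalence or fibration precisely when its image under $U$ is, and propose $CI = \{CA \to CB\}$ and $CJ$ as generating cofibrations and trivial cofibrations, where $I$ and $J$ are such sets for $\spectra$. By the standard recognition theorem (as formulated in \cite{EKMM} or \cite{MR1806878}), the two things to check are smallness of the domains $CA$ relative to the prospective relative cell complexes, and acyclicity: every relative $CJ$-cell complex has weakly equivalent underlying spectrum to its initial vertex.

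Smallness follows formally from the adjunction $(C,U)$ together with the statement that $U$ preserves the relevant transfinite sequential colimits, which in turn holds once we know that the transition maps of a $CI$-cell complex become $h$-cofibrations of underlying spectra. For acyclicity, consider a pushout in $\alg{C}{\spectra}$
\[
\xymatrix{CA \ar[r] \ar[d] & X \ar[d] \\ CB \ar[r] & Y}
\]
with $A \to B$ a map in $J$. I would construct a filtration $X = Y_0 \to Y_1 \to \cdots$ of $UY \cong \colim_n Y_n$ in $\spectra$ whose $n$th filtration subquotient is built, via the $\Sigma_n$-twisted half-smash product with $C(n)$ (viewed as a $\Sigma_n$-space over $\Lin(n)$), from $n$-fold smash powers of the cofiber $B/A$ together with pieces of the underlying spectrum $UX$. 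Because $C(n) \ltimes_{\Sigma_n} (-)$ applied to a map built from the trivial cofibration $A \to B$ produces a weak equivalence, each $Y_{n-1} \to Y_n$ is a weak equivalence, and the transfinite composite is as well provided the transition maps are $h$-cofibrations of spectra.

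The hard part, and the step that warrants genuine verification, is establishing the ``Cofibration Hypothesis'' in this setting: namely that pushouts and transfinite compositions of $CI$-cells give rise to $h$-cofibrations on underlying spectra, and that the filtration above can be constructed and its subquotients identified explicitly as twisted half-smash products. This is carried out in detail for the linear isometries monad in \cite[\S VII]{EKMM} and extended to more general operadic contexts in \cite{MR1806878}; for an operad $C$ over $\Lin$ the argument transfers essentially verbatim, because $C$ acts on $\spectra$ through $\Lin$ and the $\Lin(k)$-twisted half-smash products have the required homotopical properties (freeness of the $\Sigma_k$-action on $\Lin(k)$, good behavior on cofibrations and on wedges built from a trivial cofibration, and compatibility with sequential colimits along $h$-cofibrations). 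Granting the Cofibration Hypothesis, both smallness and acyclicity follow from the filtration argument above, and the transfer principle supplies the model structure with the generating (trivial) cofibrations as asserted.
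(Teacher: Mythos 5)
Your proposal takes essentially the same route the paper does: transfer along the free--forgetful adjunction $(C,U)$, with the real technical content concentrated in adapting the Cofibration Hypothesis of \cite[\S VII]{EKMM}, and with \cite{MR1806878} as the source for the filtration-by-smash-powers argument for acyclicity. The paper's own proof is in fact little more than the citation you give, so you are supplying more of the scaffolding, but the argument is the same one the authors have in mind.

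One point worth tightening: when you invoke ``$C(n)\ltimes_{\Sigma_n}(-)$ applied to a map built from the trivial cofibration $A\to B$ produces a weak equivalence,'' this is where essentially all of the homotopical content lives, and the justification is not freeness of $\Sigma_n$ on $C(n)$ (which fails for a general operad over $\Lin$, and is not assumed here) but rather the good behavior of the twisted half-smash product for spaces mapping to $\Lin(n)$ combined with the fact that the relevant smash powers are built from the (co)domains of the generating (trivial) cofibrations of $\spectra$, which are tame. This is exactly the content of the results around EKMM Propositions X.4.7 and X.4.9 and Theorem A.7.4, which the paper points to a few lines below in the proof of Proposition \ref{t-pr-C-spectra-equiv-D-spectra}. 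With that correction to the mechanism driving the weak-equivalence step, your proof is sound.
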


In particular, the category of $C$-spectra is cocomplete (this is
explained on pp.  46---49 of \cite{EKMM}), a fact we use in the
following construction.  Let $f\colon C\to D$ be a map of operads over
$\Lin,$ so there is a forgetful functor
\[
    f^{*} \colon \alg{D}{\spectra} \rightarrow \alg{C}{\spectra}.
\]
We construct the left adjoint $f_{!}$ of $f^{*}$ as a certain
coequalizer in $C$-algebras; see \cite[\S II.6]{EKMM} for further
discussion of this construction.

Denote by $m\colon DD\to D$ the multiplication for $D$, and let $A$ be a
$C$-algebra with structure map $\mu\colon CA\to A$.  Define $f_{!}A$ to be
the coequalizer in the diagram of $D$-algebras
\begin{equation} \label{eq:11}
\xymatrix{ {DCA} \ar@<.3ex>[rr]^{D\mu} \ar@<-.3ex>[rr] \ar[dr]_{Df} &
& {DA} \ar[r] & {f_{!}A.}
\\
& {DDA} \ar[ur]_{m} }
\end{equation}

In fact, it's enough to construct $f_{!}A$ as the coequalizer in
spectra.  Then $D$, applied to the unit $A\to CA$, makes the diagram a
reflexive coequalizer of spectra, and so $f_{!}A$ has the structure of
a $D$-algebra, and as such is the $D$-algebra coequalizer
\cite[\S II.6.6]{EKMM}. By construction, we have the following proposition.

\begin{Proposition} \label{t-pr-D-tens-C}
The functor $f_{!}$ is a continuous left adjoint to $f^{*}$; moreover, for any
spectrum $V$, the natural map
\begin{equation} \label{eq:8}
     f_{!} CV \to DV
\end{equation}
is an isomorphism.
\end{Proposition}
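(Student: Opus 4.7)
The plan is to establish the adjunction $f_{!}\dashv f^{*}$ first, and then deduce the isomorphism $f_{!}CV\to DV$ by a Yoneda argument using uniqueness of adjoints.

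First I would check that the coequalizer \eqref{eq:11} really defines a functor $f_{!}$ left adjoint to $f^{*}$. Given a $C$-spectrum $A$ and a $D$-spectrum $B$, a $D$-algebra map $g\colon f_{!}A\to B$ is the same thing as a $D$-algebra map $\tilde g\colon DA\to B$ that coequalizes the two arrows $DCA\rightrightarrows DA$. By freeness of $D$, such a $\tilde g$ corresponds to a map of spectra $h\colon A\to B$, and unwinding the equalization condition along $D\mu$ versus $m\circ Df$ shows that the coequalization property is exactly the assertion that $h$ commutes with the $C$-action on $A$ and the pulled-back $C$-action $f^{*}$ on $B$, i.e.\ that $h$ is a morphism $A\to f^{*}B$ of $C$-spectra. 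This yields a natural bijection $\alg{D}{\spectra}(f_{!}A,B)\iso \alg{C}{\spectra}(A,f^{*}B)$. For the continuity statement, all of the operations in the coequalizer (applying $D$, the natural transformations $\mu$, $m$, $f$, and the coequalizer itself) are continuous in the topological categories at hand, so the adjunction bijection is a homeomorphism of mapping spaces.

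A subsidiary point I would address here is the well-definedness of the coequalizer as a $D$-algebra: following the hint in the paper, I would take the coequalizer in $\spectra$ and use that the parallel pair is reflexive (the common section being $D$ applied to the unit $A\to CA$). By \cite[\S{II.6.6}]{EKMM}, reflexive coequalizers of $D$-algebras are created in $\spectra$, which both makes the formula sensible and confirms that the resulting object is the $D$-algebra coequalizer.

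For the isomorphism \eqref{eq:8}, I would argue abstractly. The forgetful functor $U_{D}\colon\alg{D}{\spectra}\to\spectra$ factors as $U_{C}\circ f^{*}$, where $U_{C}\colon\alg{C}{\spectra}\to\spectra$ is the forgetful functor, simply because $f^{*}$ does not change underlying spectra. Passing to left adjoints and using the uniqueness of adjoints up to natural isomorphism, the left adjoint of $U_{D}$, which is $D$, is naturally isomorphic to the composite of left adjoints $f_{!}\circ C$ applied to $V$, which is $f_{!}CV$. The resulting natural isomorphism $f_{!}CV\iso DV$ is the one induced by the adjunction unit, hence agrees with the natural map \eqref{eq:8}. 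Alternatively, one can exhibit the isomorphism directly from the coequalizer: for $A=CV$ with $\mu=m_{C}$, the map $m_{D}\circ Df_{V}\colon DCV\to DV$ equalizes $Dm_{C}$ and $m_{D}\circ Df_{CV}$ (using the monad map identity $f\circ m_{C}=m_{D}\circ Df\circ f_{C}$ and associativity of $m_{D}$), and the splitting $V\to CV\to DCV$ makes it straightforward to verify it is the coequalizer.

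I do not expect any serious obstacle here; the combinatorics of the coequalizer are routine. The only mild subtlety is the bookkeeping that the reflexive coequalizer computed in $\spectra$ really does inherit the $D$-algebra structure and serve as the coequalizer in $\alg{D}{\spectra}$, which is where the reflexive-coequalizer remark in \cite{EKMM} is doing the real work.
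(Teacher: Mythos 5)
Your proof is correct and fills in exactly the details the paper leaves implicit. The paper gives no proof beyond "By construction" (after observing that the diagram is a reflexive coequalizer in $\spectra$, so that $f_!A$ exists as a $D$-algebra and is the coequalizer in $\alg{D}{\spectra}$); your unwinding of the coequalizer's universal property together with the free--forgetful adjunction for $D$ supplies the adjunction, and your uniqueness-of-adjoints observation that $U_D = U_C \circ f^*$ forces $D \cong f_! \circ C$ is the natural way to make the "by construction" isomorphism \eqref{eq:8} precise. This is the same argument the paper has in mind, just spelled out.
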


\begin{Remark}\label{rem-3}
Some treatments write $C\otimes V$ for the free $C$-algebra
$CV$, and then $D\otimes_{C}A$ for $f_{!}A$. 
\end{Remark}

About this adjoint pair there is the following well-known result,
which follows from the fact that $f^*$ preserves fibrations and weak
equivalences.

\begin{Proposition}\label{t-pr-coeo-lms-spectra}
Let $f\colon C\to D$ be a map of operads over $\Lin$.  The pair
$(f_{!},f^{*})$ is a continuous Quillen pair.
\end{Proposition}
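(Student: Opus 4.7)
The approach is to use the standard criterion that $(f_!, f^*)$ is a Quillen pair if and only if the right adjoint $f^*$ preserves fibrations and trivial fibrations. Since Proposition \ref{t-pr-C-spectra-level-model} asserts that in both $\alg{C}{\spectra}$ and $\alg{D}{\spectra}$ the fibrations and weak equivalences are created by the forgetful functor to $\spectra$, the proof reduces to an essentially trivial verification about forgetful functors.

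The first step is to observe that the forgetful functor $f^*\colon \alg{D}{\spectra}\to \alg{C}{\spectra}$ fits into a commutative triangle with the forgetful functors $U_D\colon \alg{D}{\spectra}\to \spectra$ and $U_C\colon \alg{C}{\spectra}\to\spectra$; this is because pulling back a $D$-algebra structure along the operad map $f$ does not alter the underlying spectrum. Next, since fibrations and weak equivalences in either category of algebras are by definition precisely those maps that are fibrations or weak equivalences after applying the forgetful functor to $\spectra$, the identity $U_C f^* = U_D$ immediately implies that $f^*$ preserves fibrations and weak equivalences, hence in particular trivial fibrations. Combined with the adjunction $(f_!, f^*)$ constructed in Proposition \ref{t-pr-D-tens-C}, this establishes the Quillen pair.

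The remaining point is continuity: we need $f_!$ (equivalently $f^*$) to be a topological functor, and the adjunction isomorphism to be a homeomorphism of mapping spaces. Continuity of $f^*$ is immediate since it is the identity on underlying spectra and $\alg{C}{\spectra}$ and $\alg{D}{\spectra}$ both have mapping spaces inherited as subspaces of the mapping spaces in $\spectra$. Continuity of the adjunction follows directly from the coequalizer construction of $f_!$ in diagram \eqref{eq:11}: the coequalizer is formed in a topologically enriched category and the universal property yields a natural homeomorphism
\[
\alg{D}{\spectra}(f_! A, B) \iso \alg{C}{\spectra}(A, f^* B).
\]

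There is essentially no obstacle here; the result is a formal consequence of how the lifted model structure is defined. The only genuine content is the observation that both model structures are lifted from the same underlying category $\spectra$ and that $f^*$ is compatible with the forgetful functors, which is built into the constructions of Section \ref{sec:einfty-spectra}.
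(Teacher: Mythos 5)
Your proof is correct and takes essentially the same approach as the paper, which simply notes that the result "follows from the fact that $f^*$ preserves fibrations and weak equivalences." You have spelled out the straightforward verification (using $U_C f^* = U_D$ and the fact that both model structures are created by the forgetful functors to $\spectra$) and also addressed the continuity of the adjunction, which the paper leaves implicit.
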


It is folklore that all $E_{\infty}$ operads over $\Lin$ give rise
to the same homotopy theory.   Over the years, various arguments have
been given to show this, starting with May's use of the bar
construction to model $f_!$ (see \cite[\S II.4.3]{EKMM} for the most
recent entry in this line).  We present a model-theoretic formulation
of this result (under mild hypotheses on the operads) in the remainder
of the subsection.

\begin{Proposition} \label{t-pr-C-spectra-equiv-D-spectra}
If $f\colon C\to D$ is a map of $\einfty$ or $\ainfty$ operads, then $(f_{!},f^{*})$ is a
Quillen equivalence.  More generally, if each map
\[
    f\colon C (n) \to D (n)
\]
is a weak equivalence of spaces, then $(f_{!},f^{*})$ is a Quillen
equivalence.
\end{Proposition}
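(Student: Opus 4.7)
The plan is to apply the standard Quillen-equivalence criterion to the Quillen pair $(f_!, f^*)$ produced by Proposition~\ref{t-pr-coeo-lms-spectra}. Since $f^*$ creates weak equivalences (Proposition~\ref{t-pr-C-spectra-level-model}), it reflects them, and every object of $\alg{D}{\spectra}$ is automatically fibrant. It therefore suffices to show that the unit map $\eta_A \colon A \to f^* f_! A$ is an underlying weak equivalence of spectra for every cofibrant $A \in \alg{C}{\spectra}$.

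First I would handle the free case $A = CV$ with $V$ a cofibrant spectrum. By Proposition~\ref{t-pr-D-tens-C}, $f_! CV \cong DV$, and under this identification $\eta_{CV}$ is the wedge over $k \geq 0$ of the maps
\[
f \ltimes \mathrm{id} \colon C(k) \ltimes_{\Sigma_k} V^{\wedge k} \longrightarrow D(k) \ltimes_{\Sigma_k} V^{\wedge k}.
\]
In the $\ainfty$ case the operads are nonsymmetric, so there is no $\Sigma_k$-quotient to worry about and the claim reduces to the fact that smashing an equivalence of unbased spaces with a cofibrant spectrum yields a weak equivalence of spectra. In the $\einfty$ case, $\Sigma_k$ acts freely on $C(k)$ and $D(k)$, and $V^{\wedge k}$ carries enough $\Sigma_k$-cofibrancy (as exploited in \cite[\S VII]{EKMM}) that the half-smash $(-) \ltimes_{\Sigma_k} V^{\wedge k}$ sends $\Sigma_k$-equivariant weak equivalences between free $\Sigma_k$-spaces to weak equivalences of spectra.

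Next I would extend this to all cofibrant $A$ by cell induction. By Proposition~\ref{t-pr-C-spectra-level-model}, every cofibrant $A$ is a retract of a cell $C$-algebra, built from the initial object $CS$ by a transfinite sequence of pushouts
\[
\xymatrix{
CV_\alpha \ar[r] \ar[d] & A_\alpha \ar[d] \\
CW_\alpha \ar[r] & A_{\alpha+1}
}
\]
along generating cofibrations $CV_\alpha \to CW_\alpha$ of $\alg{C}{\spectra}$. The left adjoint $f_!$ preserves each such pushout and carries $CV_\alpha \to CW_\alpha$ to $DV_\alpha \to DW_\alpha$, while both functors commute with transfinite colimits. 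Once the successive cofibers of the two filtrations are controlled, the free-algebra case together with the gluing lemma for weak equivalences along cofibrations gives that $\eta_A$ is an equivalence at each stage, and passing to the colimit finishes the induction. Preservation under retracts is formal.

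The hard part, and the only nonformal ingredient, is the pushout step: the underlying spectrum of $A_{\alpha+1}$ is \emph{not} the underlying pushout $A_\alpha \cup_{CV_\alpha} CW_\alpha$ in $\spectra$, so one must supply a filtration of $A_{\alpha+1}$ over $A_\alpha$ whose successive quotients are built from half-smash products of the form $C(k) \ltimes_{\Sigma_k} (-)$ applied to smash constructions in $V_\alpha$, $W_\alpha$, and $A_\alpha$. This is the operadic generalization of the cofibration-hypothesis filtration of \cite[\S{VII}]{EKMM} alluded to in Proposition~\ref{t-pr-C-spectra-level-model}. Once this filtration is in hand, $f_!$ produces an exactly parallel filtration on $f_! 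A_{\alpha+1}$ with $D(k) \ltimes_{\Sigma_k} (-)$ in place of $C(k) \ltimes_{\Sigma_k} (-)$, and the hypothesis that each $f \colon C(k) \to D(k)$ is a $\Sigma_k$-equivariant weak equivalence yields the comparison on associated graded pieces by the same half-smash analysis used in the free case. Assembling these equivalences along the filtration and taking the colimit delivers the required weak equivalence $\eta_A$.
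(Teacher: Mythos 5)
Your overall strategy matches the paper's: reduce to showing the unit $A \to f^* f_! A$ is an underlying weak equivalence for cofibrant $A$, dispose of the free case using the hypotheses on the operad spaces and the tameness machinery of \cite{EKMM}, and then climb the cell filtration. Up to and including the free case your account is essentially the paper's.

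Where you diverge is in the key technical step, and there your sketch contains an imprecision that hides real additional work. You propose to filter $A_{\alpha+1} = CW_\alpha \coprod_{CA_\alpha} X_n$ (your notation: $A_\alpha$) over $A_\alpha$ and to compare associated graded pieces. But the successive cofibers of the standard filtration of an operadic-algebra pushout are not of the form $C(k)\ltimes_{\Sigma_k}(\text{--})$: they are of the form $U^C_{A_\alpha}(k)\wedge_{\Sigma_k}(W_\alpha/V_\alpha)^{\wedge k}$, where $U^C_{A_\alpha}$ is the \emph{enveloping operad} of $A_\alpha$ over $C$. To compare these with the cofibers of the filtration of $DW_\alpha \coprod_{DV_\alpha} f_!A_\alpha$, you would need a second induction establishing that the map of enveloping operads $U^C_{A_\alpha}\to U^D_{f_!A_\alpha}$ is a levelwise weak equivalence of suitably free $\Sigma_k$-spectra, and that each $U^C_{A_\alpha}(k)$ is tame enough for the half-smash analysis. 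This is not a minor wrinkle; it is a genuine secondary induction that your outline elides by writing $C(k)\ltimes_{\Sigma_k}(\text{--})$ in place of $U^C_{A_\alpha}(k)\wedge_{\Sigma_k}(\text{--})$. The filtration route can be made to work (it is essentially the approach of Harper and of Elmendorf--Mandell), but it is materially harder than your sketch indicates.

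The paper avoids enveloping operads entirely by using a different decomposition: the pushout $CB \coprod_{CA} X_n$ is \emph{homeomorphic} to the two-sided bar construction $B(CB, CA, X_n)$, whose $q$-simplices are the $C$-algebra coproducts $CB\coprod^q CA \coprod X_n$. Since $f_!$ is a continuous left adjoint it commutes with coproducts and geometric realization, so $f_!$ carries this bar construction isomorphically onto $B(DB, DA, f_!X_n)$. The $q$-simplices $CB\coprod^q CA\coprod X_n$ can be built in the same number of stages as $X_n$, so the inductive hypothesis applies to them directly, and properness of the bar construction (which requires the nondegenerate-basepoint hypothesis on $C(1)$) plus tameness lets one pass the levelwise equivalences through geometric realization. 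In short: the paper re-expresses the pushout as a realization of coproducts involving free algebras, so the induction closes without ever introducing enveloping operads; your filtration approach would need to confront them explicitly.
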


Before giving the proof, we make a few remarks.  Assume $f$ is a weak
equivalence of operads.  Since the pullback $f^* \colon \alg{D}{\spectra}
\to \alg{C}{\spectra}$ preserves fibrations and weak equivalences, to
show that $(f_!, f^*)$ is a Quillen equivalence it suffices to show
that for a cofibrant $C$-algebra $X$ the unit of the adjunction $X
\to f^* f_! X$ is a weak equivalence.

If $X = C Z$ is a \emph{free}
$C$-algebra, then $f_{!}X = f_{!} CZ \iso DZ$ by \eqref{eq:8}, and so
the map in question is the natural map
\[
      C Z \to DZ.
\]
It follows from Propositions X.4.7, X.4.9, and A.7.4 of \cite{EKMM} that
if the operad spaces $C (n)$ and $D (n)$ are CW-complexes, and if $Z$
is a wedge of spheres or disks, then $CZ\to DZ$ is a homotopy
equivalence.   In fact, this argument applies to the wider class of
\emph{tame} spectra, whose definition we now recall.

\begin{Definition}[\cite{EKMM}, Definition I.2.4]
A prespectrum $D$ is \emph{$\Sigma$-cofibrant} if each of the structure maps
$\Sigma^{W}D (V)\to D (V\oplus W)$ is a (Hurewicz) cofibration.  A
spectrum $Z$ is \emph{$\Sigma$-cofibrant} if it is isomorphic to one of the
form $LD$, where $D$ is a $\Sigma$-cofibrant prespectrum and $L$
denotes the spectrification functor~\cite[I.2.2]{LMS}.  A spectrum
$Z$ is \emph{tame} if it is homotopy equivalent to a
$\Sigma$-cofibrant spectrum.  In particular, a spectrum $Z$ of the
homotopy type of a CW-spectrum is tame.
\end{Definition}

For a general cofibrant $X$, the argument proceeds by reducing to
the free case $X=CZ$. In this paper, we present an inductive argument
due to Mandell \cite{Mandell:thesis}.  A different induction of this
sort appeared in \cite{Mandell:TAQ} in the algebraic setting; that
argument can be adapted to the topological context with minimal
modifications.

Our induction will involve the geometric realization
of simplicial spectra.  As usual, we would like to ensure that a map
of simplicial spectra
\[
f_{\bullet}\colon K_{\bullet} \to K'_{\bullet}
\]
in which each $f_{n}\colon K_{n} \to K'_{n}$ is a weak equivalence yields a
weak equivalence upon geometric realization.   The required condition
is that the spectra $K_{n}$ and $K_{n}'$ are tame: Theorem X.2.4 of
\cite{EKMM} says that the realization of weak equivalences of tame
spectra is a weak equivalence if $K_{\bullet}$ and $K'_{\bullet}$ are
``proper'' \cite[\S X.2.1]{EKMM}.  Recall that a simplicial spectrum
$K_\bullet$ is proper if the natural map of coends
\[
\int^{D_{q-1}} K_p \Smash D(q,p)_+ \to \int^{D_q} K_p \Smash D(q,p)_+
\cong K_q
\]
is a Hurewicz cofibration, where $D$ is the subcategory of $\Delta$
consisting of the monotonic surjections (i.e. the degeneracies), and
$D_q$ is the full subcategory of $D$ on the objects $0 \leq i \leq
q$.  This is a precise formulation of the intuitive notion that the
inclusion of the union of the degenerate spectra $s_j K_{q-1}$ in
$K_q$ should be a Hurewicz cofibration.

Thus, to ensure that the spectra that arise in our argument are tame
and the simplicial objects proper, we make the following simplifying
assumptions on our operads.

\begin{enumerate}
\item We assume that the spaces $C(n)$ and $D(n)$ have the
  homotopy type of $\Sigma_{n}$-$CW$-complexes.
\item We assume that $C(1)$ and $D(1)$ are equipped with
  nondegenerate basepoints.
\end{enumerate}

We believe these assumptions are reasonable, insofar as they are
satisfied by many natural examples; for instance, the linear
isometries operad and the little $n$-cubes operad both satisfy the
hypotheses above (see \cite[XI.1.4, XI.1.7]{EKMM} and
\cite[4.8]{May:gils} respectively).  More generally, if $\mathcal{O}$
is an arbitrary operad over the linear isometries operad, then taking
the geometric realization of the singular complex of the spaces
$\mathcal{O}$ produces an operad $|S (\mathcal{O})|$ with the
properties we require.

Goerss and Hopkins have proved two versions of Proposition
\ref{t-pr-C-spectra-equiv-D-spectra} using resolution model structures
to resolve an arbitrary cofibrant $C$-space by a simplicial $C$-space
with free $k$-simplices for every $k$.  A first version
\cite{GH:mpsht} proves the Proposition for Lewis-May-Steinberger
spectra, avoiding our 
simplifying assumptions on the operads via a detailed study of
``flatness'' for spectra (as an alternative to the theory of
``tameness'').  A more modern treatment \cite{GH:obstruct} works with
operads of simplicial sets and symmetric spectra in topological
spaces.  In that case, as they explain, a key point is that if $X$ is
a cofibrant spectrum, then $X^{(n)}$ is a \emph{free}
$\Sigma_{n}$-spectrum (see Lemma 15.5 of \cite{MR1806878}).  This
observation helps explain why the general form of the Proposition is
reasonable, even though the analogous statement for spaces is much too
strong.  We now give the proof of
Proposition~\ref{t-pr-C-spectra-equiv-D-spectra} under the
hypotheses enumerated above.

\begin{proof}
A cofibrant $C$-spectrum is a retract of a cell $C$-spectrum, and
so we can assume without loss of generality that $X$ is a cell
$C$-spectrum.  The argument for
Proposition~\ref{t-pr-C-spectra-level-model} implies that cell objects 
can be described as $X = \colim_n X_n$, where $X_0 = C(*)$ and
$X_{n+1}$ is obtained from $X_n$ via a pushout (in $C$-algebras) of
the form
\[
\xymatrix{
C A \ar[r] \ar[d] & X_n \ar[d] \\
C B \ar[r] & X_{n+1} \\
}
\]
where $A \to B$ is a wedge of generating cofibrations of spectra.
Furthermore, by the proof of Proposition~\ref{t-pr-C-spectra-level-model}
(specifically, the Cofibration Hypothesis), the map $X_n \to X_{n+1}$
is a Hurewicz
cofibration of spectra.   The hypotheses on $C$ and the fact that $A$
and $B$ are CW-spectra imply that $CA$ and $CB$ have the homotopy
type of CW-spectra, and thus inductively so does $X_n$.  Therefore,
since $f_!$ is a left adjoint, it suffices to show that $X_n \to f^*
f_! X_n$ is a weak equivalence for each $X_n$ --- under these
circumstances, a sequential colimit of weak equivalences is a weak
equivalence.

We proceed by induction on the number of stages required to build the
$C$-spectrum.  The base case follows from the remarks preceding the
proof.  For the induction hypothesis, assume that $f_!$ is a weak
equivalence for all cell $C$-algebras that can be built in $n$ or
fewer stages.  The spectrum $X_{n+1}$ is a pushout $C B \coprod_{C
  A} X_n$ in $C$-algebras, and this pushout is homeomorphic to a bar
construction $B(C B, C A, X_n)$, which is the geometric realization of
a simplicial spectrum where the $m$th is the coproduct $C B \coprod^m
C A \coprod X_n)$.  Since $f_!$ is a continuous left adjoint, it
commutes with geometric realization and coproducts in $C$-algebras,
and so $f_!(B(C B, C A, X_n))$ is homeomorphic to $B(D B, D A, f_!
X_n)$.

The bar constructions we are working with are proper simplicial
spectra by the hypothesis that $C(1)$ and $D(1)$ have
nondegenerate basepoints, and thus it suffices to show that at each
level in the bar construction
\[B_q (C B, C A, X_n) \to B_q (D B, D A, f_! X_n)\]
we have a weak equivalence of tame spectra.  This follows
from the inductive hypothesis: we have already shown that the spectra
are tame, and $C B \coprod^q C A \coprod X_n$ can be built in $n$
stages, since $X_n$ can be built in $n$ stages and the free algebras
can be built and added in a single stage.
\end{proof}

The idea of the following corollary goes all the way back to
\cite{May:gils}.

\begin{Corollary}\label{t-co-ho-einfty-well-defined}
If $C$ and $D$ are any two $\einfty$ operads over the linear
isometries operad, then the categories of $C$-algebras and
$D$-algebras are connected by a zig-zag of continuous Quillen
equivalences.
\end{Corollary}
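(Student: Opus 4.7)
The plan is to construct an explicit zig-zag through the linear isometries operad $\Lin$ itself as the common middle term, after first replacing $C$ and $D$ with versions that meet the simplifying hypotheses (level-wise $\Sigma_n$-CW homotopy type and nondegenerate basepoints in arity one) required by Proposition~\ref{t-pr-C-spectra-equiv-D-spectra}. Since any $\einfty$ operad over $\Lin$ has contractible spaces at each level, the structure maps $C \to \Lin$ and $D \to \Lin$ are already level-wise weak equivalences of $\einfty$ operads over $\Lin$; the only obstacle to applying Proposition~\ref{t-pr-C-spectra-equiv-D-spectra} directly is that $C$ and $D$ themselves may fail the simplifying hypotheses.

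To remedy this, I would apply the singular-complex-then-realization construction $|S(-)|$ level-wise, as suggested in the paragraph preceding Proposition~\ref{t-pr-C-spectra-equiv-D-spectra}. The resulting operads $|S(C)|$ and $|S(D)|$ are $\einfty$ operads satisfying both simplifying hypotheses, and come equipped with natural level-wise weak equivalences $|S(C)| \to C$ and $|S(D)| \to D$. Composing with the given structure maps also presents them as operads over $\Lin$, and I would form the zig-zag in operads over $\Lin$
\[
C \; \longleftarrow \; |S(C)| \; \longrightarrow \; \Lin \; \longleftarrow \; |S(D)| \; \longrightarrow \; D
\]
where the outward arrows from $|S(C)|$ and $|S(D)|$ to $\Lin$ are their structure maps. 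Every operad in this chain satisfies the simplifying hypotheses (using \cite[XI.1.4, XI.1.7]{EKMM} for $\Lin$), and every arrow is a level-wise weak equivalence of $\einfty$ operads over $\Lin$. Applying Proposition~\ref{t-pr-C-spectra-equiv-D-spectra} to each arrow in turn produces a chain of continuous Quillen equivalences connecting $\alg{C}{\spectra}$ to $\alg{D}{\spectra}$, as required.

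The main technical point to verify is that $|S(-)|$ genuinely produces an $\einfty$ operad of the required homotopical flavor. Because $|S(-)|$ is lax but not strong symmetric monoidal, the operadic composition maps for $|S(C)|$ must be assembled by precomposing the evident map $|S(C(n) \times C(k_1) \times \cdots \times C(k_n))| \to |S(C(k_1 + \cdots + k_n))|$ with the shuffle comparison $|S(X)| \times |S(Y)| \to |S(X \times Y)|$. One must then check that the $\Sigma_n$-freeness is preserved (which follows from the fact that the singular complex of a free $\Sigma_n$-space realizes as a free $\Sigma_n$-CW-complex) and that the unit vertex in arity one yields a nondegenerate basepoint (which holds because vertices of simplicial sets realize to $0$-cells). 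These verifications are routine but essential, and once in hand, the conclusion follows from four applications of Proposition~\ref{t-pr-C-spectra-equiv-D-spectra}.
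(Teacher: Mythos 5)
Your proposed remedy is circular. You correctly observe that the paper's proof of Proposition~\ref{t-pr-C-spectra-equiv-D-spectra} requires the simplifying hypotheses (levelwise $\Sigma_n$-CW homotopy type, nondegenerate basepoint in arity one) on \emph{both} the source and the target operad, and you try to sidestep this by inserting $|S(-)|$. But the arrows $|S(C)| \to C$ and $|S(D)| \to D$ in your chain have $C$ and $D$ as their targets, so to conclude that these two maps induce Quillen equivalences you would still need $C$ and $D$ to satisfy the very hypotheses you were trying to avoid assuming. Consequently your explicit claim that ``every operad in this chain satisfies the simplifying hypotheses'' is false: $C$ and $D$ themselves belong to the chain, and they are precisely the operads in question. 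The extra steps buy you nothing, since any zig-zag that ends at $\alg{C}{\spectra}$ and $\alg{D}{\spectra}$ must contain at least one map with $C$ (resp.\ $D$) as an endpoint, and that is where the hypothesis is needed.

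The paper's own argument is shorter and more honest about the situation: it treats the simplifying hypotheses as standing assumptions for the section (they are announced immediately before Proposition~\ref{t-pr-C-spectra-equiv-D-spectra}), and then applies that proposition directly to the structure maps $C \to \Lin$ and $D \to \Lin$, producing the two-step zig-zag
\[
\alg{C}{\spectra} \leftrightarrows \alg{\Lin}{\spectra} \leftrightarrows \alg{D}{\spectra}.
\]
The remark about $|S(-)|$ in the paper is offered as evidence that the hypotheses are not a serious restriction in practice (every operad has a replacement of the required homotopical type), not as a mechanism for extending the proof of Proposition~\ref{t-pr-C-spectra-equiv-D-spectra} to operads that fail the hypotheses. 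If you genuinely want the corollary for arbitrary $\einfty$ operads over $\Lin$ with no CW or nondegeneracy assumptions, the correct move is to strengthen Proposition~\ref{t-pr-C-spectra-equiv-D-spectra} itself (the text points to the Goerss--Hopkins treatment via ``flatness'' as one way to do this), not to lengthen the zig-zag.
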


\begin{proof}
Proposition \ref{t-pr-C-spectra-equiv-D-spectra} allow us to compare
each of categories of algebras to algebras over the linear isometries
operad.
\end{proof}

Backed by this result, we adopt the following convention.

\begin{Definition}\label{def-8}
We write $\ho \einftyspectra$ for the homotopy category of $\einfty$
ring spectra.  By this we mean the homotopy category $\ho
\alg{C}{\spectra}$ for any $\einfty$ operad $C$ over the linear
isometries operad.
\end{Definition}

\subsection{$\einfty$ spaces} \label{sec:einfty-spaces}

We adopt notation for operad actions on spaces analogous to our
notation for spectra in \S\ref{sec:einfty-spectra}.  Let $C$ be an
operad in topological spaces.  The free $C$-algebra on a space $X$ is
\begin{equation} \label{eq:25}
   CX = \coprod_{k\geq 0} C (k) \times_{\Sigma_{k}} X^{k}.
\end{equation}
We set $C (\emptyset) = C (0).$ The category of $C$-algebras in
spaces, or $C$-spaces, will be denoted $\alg{C}{\spaces}.$

Note that the sequence of spaces given by
\begin{align*}
    P (0) &  = \ptspace = P (1) \\
   P (k) & = \emptyset  \text{ for }k>1
\end{align*}
has a unique structure of operad, whose associated monad is
\[
    PX = \pt{X},
\]
so
\[
     \alg{P}{\spaces}\iso \ptdspaces.
\]

If $C$ is a unital operad and if $Y$ is a pointed space, let $\ptC Y$
be the pushout in the category of $C$-algebras
\begin{equation} \label{eq:42}
\xymatrix{
    C\ptspace \ar[r] \ar[d] & \ar[d] C (\emptyset) = \ptspace \\
CY \ar[r] & \ptC Y.
}
\end{equation}
Then $\ptC$ participates in a monad on the category of pointed spaces.
Indeed $\ptC$ is isomorphic to the monad $C_{May}$ introduced in
\cite{May:gils}, since for a test $C$-space $T$,
\[
    \alg{C}{\spaces}(\ptC Y,T)\iso \ptdspaces (Y,T) \iso
\alg{C}{\spaces} (C_{May}Y,T).
\]
There is a natural isomorphism
\[
     C X \iso \ptC (\pt{X}),
\]
and an equivalence of categories
\begin{equation} \label{eq:43}
        \alg{C}{\spaces} \iso \alg{\ptC}{\ptdspaces}.
\end{equation}
Part of this equivalence is the observation that, if $X$ is a
$C$-algebra, then it is a $\ptC$ algebra via
\[
   \ptC X \to \ptC (\pt{X}) \iso C X \to X.
\]
We have the following analogue of Proposition~\ref{t-pr-C-spectra-level-model}.

\begin{Proposition}\label{t-pr-C-spaces-level-model}
\hspace{5 pt}
\begin{enumerate}
\item \label{item:1} The category $\alg{C}{\spaces}$ has the structure of a
cofibrantly generated topological closed model category, in which the
forgetful functor to $\spaces$ creates fibrations and weak
equivalences.  If $\{A\to B \}$ is a set of generating (trivial)
cofibrations of $\spaces$, then $\{C A\to C B \}$ is a set of
generating (trivial) cofibrations of $\alg{C}{\spaces}$.
\item \label{item:2} The analogous statements hold for $\ptC$ and
$\alg{\ptC}{\ptdspaces}.$
\item Taking $C=P$, the resulting model category structure on the
category $\alg{P}{\spaces} \iso \ptdspaces$ is determined by the
forgetful functor to $\spaces$.
\item The equivalence $\alg{C}{\spaces}\iso \alg{\ptC}{\ptdspaces}$
\eqref{eq:43} carries the model structure arising from part
\eqref{item:1} to the model structure arising from part \eqref{item:2}.
\end{enumerate}
\end{Proposition}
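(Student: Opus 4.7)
The proof follows the same general pattern as the preceding Proposition \ref{t-pr-C-spectra-level-model} for spectra, replacing the ``Cofibration Hypothesis'' for spectra from \cite[\S VII]{EKMM} by its evident analogue for spaces, which is in fact easier since ordinary spaces do not require the machinery of Lewis-May-Steinberger spectrification. My plan is to establish part (1) via the standard lifting/transfer theorem for cofibrantly generated model categories (Kan's recognition theorem), and then deduce parts (2)--(4) essentially by inspection.

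For part (1), the category $\alg{C}{\spaces}$ is cocomplete and has all limits, and the free-forgetful adjunction $(C,U)$ between $\spaces$ and $\alg{C}{\spaces}$ is the situation to which Kan's transfer theorem applies. Taking $I=\{S^{n-1}\to D^n\}$ and $J=\{D^n\times\{0\}\to D^n\times I\}$ to be the standard generating (trivial) cofibrations of $\spaces$, one defines the generating (trivial) cofibrations of $\alg{C}{\spaces}$ to be $CI$ and $CJ$. There are two conditions to verify. The first is smallness of the sources $CS^{n-1}$ and $C(D^n\times\{0\})$ with respect to the relevant transfinite compositions; this follows from the smallness of finite CW-complexes in $\spaces$ together with the fact that $C$ is defined levelwise by \eqref{eq:25} using products and quotients by finite group actions, both of which commute with filtered colimits in a suitable sense. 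The second and main technical input is the acyclicity condition: every relative $CJ$-cell complex is a weak equivalence of underlying spaces. This is the analogue of the Cofibration Hypothesis, and the key point is that a pushout in $\alg{C}{\spaces}$ of the form
\[
\xymatrix{CA \ar[r]\ar[d] & X \ar[d] \\ CB \ar[r] & Y}
\]
can be analyzed as a geometric realization of a bar construction $B(CB,CA,X)$, and when $A\to B$ is a trivial cofibration of spaces the inclusion $X\to Y$ becomes both a Hurewicz cofibration and a weak equivalence after applying the forgetful functor, by the hypotheses enumerated before the proof of Proposition \ref{t-pr-C-spectra-equiv-D-spectra} together with the standard properness of such simplicial spaces. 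Transfinite composition of such maps remains a weak equivalence since Hurewicz cofibrations are closed under these colimits. This produces the cofibrantly generated model structure, and the topological enrichment is inherited from $\spaces$ because $C$ is continuous and preserves tensors with spaces up to the usual coherence.

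For part (2), the same argument applies verbatim with $\spaces$ replaced by $\ptdspaces$, whose generating (trivial) cofibrations are $I_+$ and $J_+$ in the pointed sense; one simply uses $\ptC$ in place of $C$ and uses the pushout definition \eqref{eq:42} to check that the argument for the Cofibration Hypothesis still goes through (indeed, $\ptC$ and $C$ are related by the natural isomorphism $CX\iso \ptC(\pt{X})$, so acyclicity in one setting transfers immediately to the other). Part (3) is the special case $C=P$ of part (1): the associated monad is $PX=\pt{X}$, so $\alg{P}{\spaces}\iso\ptdspaces$ and the model structure produced by part (1) is by construction the one whose fibrations and weak equivalences are detected by the forgetful functor to $\spaces$; but since a map of pointed spaces is a fibration or weak equivalence if and only if it is so after forgetting the basepoint, this coincides with the standard model structure on $\ptdspaces$. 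For part (4), the equivalence of categories $\alg{C}{\spaces}\iso \alg{\ptC}{\ptdspaces}$ described around \eqref{eq:43} sends a $C$-space $X$ to the same underlying space viewed as pointed via the unique map $C(\emptyset)=\ptspace\to X$; hence it commutes with the forgetful functors to $\spaces$ up to the adjunction $(-)_+\dashv U$, and since in both parts (1) and (2) the model structures are \emph{characterized} by having fibrations and weak equivalences created by their respective forgetful functors, the equivalence is automatically an isomorphism of model categories.

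The main obstacle, as in the spectrum case, is the verification of the Cofibration Hypothesis / acyclicity condition for pushouts along $CJ$; once this technical step is in hand, everything else is formal. Fortunately this is considerably easier in $\spaces$ than in $\spectra$, because one does not need to separately control tameness and $\Sigma$-cofibrancy, and standard properties of Hurewicz cofibrations in compactly generated weak Hausdorff spaces suffice.
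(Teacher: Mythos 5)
Your proposal is correct and fills in, via the Kan transfer theorem together with the Cofibration Hypothesis analysis of free-map pushouts (bar construction, properness, smallness), exactly what the paper's terse proof delegates to [EKMM, MMSS] by citation; and your treatment of parts (3) and (4) matches the paper's observation that the first three parts imply the last. One small caution: you invoke the CW-type and nondegeneracy hypotheses introduced just before Proposition \ref{t-pr-C-spectra-equiv-D-spectra}, but those are imposed there in order to compare algebras over \emph{different} operads, not to get the transferred model structure of the present proposition, whose statement places no hypotheses on $C$; this is a minor misattribution of which technical inputs are doing work, though it does not affect the correctness of your argument.
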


\begin{proof}
The statements about the model structure on $\alg{C}{\spaces}$ or on
$\alg{\ptC}{\ptdspaces}$ can be proved for example by adapting the
argument in \cite{EKMM,MR1806878}.  The third part is standard, and
together the first three parts imply the last.
\end{proof}

We conclude this subsection with two results
which will be useful in \S\ref{sec:einfty-spac-conn}.
For the first, note that a point of $C (0)$ determines a map of
operads
\[
   P \to C,
\]
and so we have a forgetful functor
\[
  \alg{C}{\spaces} \to \alg{P}{\spaces} \iso \ptdspaces.
\]
We say that a point of $Y$ is \emph{non-degenerate} if $(Y,\ptspace)$ is
an NDR pair, i.e. that $\ptspace \to Y$ is a Hurewicz cofibration.

\begin{Proposition}\label{t-pr-cof-c-alg-cof-ptd-space}
Suppose that $C$ is a unital operad in topological spaces (or more
generally, an operad in which the base point of $C (0)$ is
nondegenerate).  If $X$ is a cofibrant object of
$\alg{\ptC}{\ptdspaces},$ then its base point is nondegenerate.
\end{Proposition}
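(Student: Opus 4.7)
The plan is to argue by cell induction, using the explicit description of cofibrations in $\alg{\ptC}{\ptdspaces}$ provided by Proposition~\ref{t-pr-C-spaces-level-model}. Since a retract of a pointed space with a nondegenerate basepoint again has a nondegenerate basepoint, I may reduce to the case where $X$ is a cell $\ptC$-algebra, that is, $X = \colim_{n} X_{n}$ where $X_{0}$ is the initial $\ptC$-algebra and each $X_{n}\to X_{n+1}$ is a pushout in $\alg{\ptC}{\ptdspaces}$
\[
\xymatrix{\ptC A \ar[r]\ar[d] & X_{n}\ar[d]\\ \ptC B \ar[r] & X_{n+1}}
\]
along a (coproduct of) generating cofibrations $A\to B$ of $\ptdspaces$, which are Hurewicz cofibrations of pointed spaces.

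Next, I would verify the base case. The initial object $X_{0}$ is $\ptC(\ptspace)$; inspection of the defining pushout \eqref{eq:42} shows that its underlying pointed space is just $C(0)$ pointed by the distinguished element. The hypothesis that this basepoint is nondegenerate (automatic when $C$ is unital, since then $C(0)=\ptspace$) is precisely what we need.

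The heart of the argument is the inductive step: if $X_{n}$ has a nondegenerate basepoint, then so does $X_{n+1}$. Here I would pass to the underlying pointed space and filter the pushout $\ptC B \coprod_{\ptC A} X_{n}$ by word length, imitating the analysis of such pushouts carried out for spectra in Appendix VII of \cite{EKMM} and used in the proof of Proposition~\ref{t-pr-C-spectra-level-model}. Each filtration quotient is built from the pieces $C(n)\times_{\Sigma_{n}}(\text{iterated pushout-product of }A\to B\text{ and }X_{n})$, and the fact that $A\to B$ is a Hurewicz cofibration together with the nondegeneracy of the basepoint of $X_{n}$ implies via the standard pushout-product / NDR manipulations that the map $X_{n}\to X_{n+1}$ is a Hurewicz cofibration of pointed spaces whose target retains a nondegenerate basepoint. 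Since transfinite composites of Hurewicz cofibrations between nondegenerately based spaces again have nondegenerate basepoints, passing to the colimit $X=\colim X_{n}$ completes the proof.

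The main obstacle will be the inductive step, specifically verifying that the word-length filtration on $\ptC B\coprod_{\ptC A} X_{n}$ produces Hurewicz cofibrations on the underlying pointed-space level. Everything hinges on having enough nondegeneracy at each stage to propagate through the quotients by the symmetric group actions and through the identifications that define $\ptC$ as a reduced version of $C$; the hypothesis on the basepoint of $C(0)$ is exactly what keeps these identifications compatible with the NDR structure, and with that in hand the argument becomes a routine application of the cofibration hypothesis.
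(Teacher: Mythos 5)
Your overall strategy --- reduction to cell objects, the correct identification of the base case $X_{0}=\ptC(\ptspace)\cong C(0)$, and the plan to show each $X_{n}\to X_{n+1}$ is an unbased Hurewicz cofibration --- matches the skeleton of the paper's proof. The divergence is entirely in the inductive step, and your route is genuinely different from the one taken in the text.

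The paper does \emph{not} use the word-length filtration and pushout-product calculus directly. Instead, it invokes the Cofibration Hypothesis in its two-sided bar construction form: the pushout $X_{n+1}$ is identified with the geometric realization of $B_{\bullet}(CB,CA,X_{n})$, whose $k$-simplices are coproducts $CB\coprod_{C}(CA)^{\coprod k}\coprod_{C}X_{n}$ in $C$-spaces. The key technical input is then the observation (deduced along the lines of \cite[\S VII.6]{EKMM}) that for any $C$-algebra $A$ and space $B$, the canonical map $A\to A\coprod_{C}CB$ is the inclusion of a union of components. This makes the simplicial degeneracies inclusions of clopen subspaces --- hence Hurewicz cofibrations for free, with no NDR bookkeeping whatsoever --- so the simplicial space is proper, and the inclusion of $0$-simplices into the realization is an unbased Hurewicz cofibration. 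That is how the paper gets around any question of whether $C(n)$ for $n\geq 1$ has good point-set properties: the relevant inclusions are split in a very strong sense, so no cofibrancy of the operad spaces is ever needed.

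Your approach, by contrast, would need two corrections to be made precise. First, the filtration quotients are not built from $C(n)\times_{\Sigma_{n}}(\slot)$ but from the \emph{enveloping operad} of $X_{n}$, $U_{C}(X_{n})(k)\times_{\Sigma_{k}}(\slot)$, with the $k$-fold pushout-corner of $A\to B$ (not a pushout-product involving $X_{n}$) in the second slot. Second, the propagation of the Hurewicz-cofibration property through the $\Sigma_{k}$-quotient requires that the NDR data on the $k$-fold pushout-corner be chosen $\Sigma_{k}$-equivariantly (which it can be, by the explicit product-of-NDR-pairs formulas) and then descended through $U_{C}(X_{n})(k)\times_{\Sigma_{k}}(\slot)$; this is fine, but it is exactly the kind of equivariant NDR manipulation that the paper's "inclusion of a component" argument is designed to avoid. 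Also note that the nondegeneracy of the basepoint of $X_{n}$ is not actually an input to showing $X_{n}\to X_{n+1}$ is an unbased Hurewicz cofibration --- it is used only afterwards, to conclude that the composite $\ptspace\to X_{n}\to X_{n+1}$ is a cofibration; and the hypothesis on $C(0)$ is used only in the base case, not to "keep the identifications compatible with the NDR structure" in the inductive step. With those points repaired your argument would go through, but it is doing measurably more work than the paper's, which gets the cofibrations from a purely set-theoretic decomposition rather than from NDR estimates.
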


Note that Rezk
\cite{Rezk:thesis} and Berger and Moerdijk \cite{MR2016697} have
proved a similar result, for algebras in a general model category over
an \emph{cofibrant} operad.  In our case, we need only assume that the
zero space $C (0)$ of our operad has a non-degenerate base point.

\begin{proof}
In the model structure described in Proposition
\ref{t-pr-C-spaces-level-model}, a cofibrant object is a retract of a
cell object, and so we can assume without loss of generality that $X$
is a cell $C$-space.  That is,
\begin{equation}\label{eq:50}
X = \colim_{n} X_{n}
\end{equation}
where $X_{0} = C (\emptyset)$ and $X_{n+1}$ is obtained from $X_n$ as
a pushout in $C$-spaces
\begin{equation} \label{eq:49}
\xymatrix{
C A \ar[r] \ar[d] & X_n \ar[d] \\
C B \ar[r] & X_{n+1}, \\
}
\end{equation}
where $A \to B$ is a disjoint union of generating cofibrations of
$\spaces$.

Our argument relies on a form of the Cofibration Hypothesis of \S{VII} of
\cite{EKMM}.  The key points are the following.

\begin{enumerate}
\item By assumption $X_0 = C (\emptyset) = C (0)$ is non-degenerately
based.
\item The space underlying the $C$-algebra colimit $X$ in
\eqref{eq:50} is just the space-level colimit.
\item In the pushout above,
\[
   X_{n} \to X_{n+1}
\]
is a based map and an unbased Hurewicz cofibration.
\end{enumerate}

The second point is easily checked (and is the space-level analog of
Lemma 3.10 of \cite{EKMM}).  For the last part, the argument in
Proposition 3.9 of \S{VII} of \cite{EKMM} (see also Lemma 15.9 of
\cite{MR1806878}) shows that the pushout \eqref{eq:49} is isomorphic
to a two-sided bar construction $B(C B, C A, X_n)$: this is the
geometric realization of a simplicial space where the $k$-simplices
are given as
$$C B \coprod_{C} (C A)^{\coprod k}\coprod_{C} X_n,$$ and the
simplicial structure maps are induced by the folding map and the maps
$C A \to C B$ and $C A \to X_n$.  Note that by $\coprod_{C}$ we mean
the coproduct in the category of $C$-spaces.  Recall that coproducts
(and more generally all colimits) in $C$-spaces admit a description as
certain coequalizers in $\spaces$.  Specifically, for $C$-spaces $X$
and $Y$ the coproduct $X \coprod_{C} Y$ can be described as the
coequalizer in $\spaces$
\[
\xymatrix{ C (C X \coprod C Y) \ar@<1ex>[r] \ar@<-1ex>[r] & C(X
\coprod
Y) \ar[r]&  X\coprod_{C} Y, \\
}
\]
where the unmarked coproducts are taken in $\spaces$ and the maps are
induced from the action maps and the monadic structure map,
respectively.  Following an argument along the lines of \cite[\S
VII.6]{EKMM}, we can show that for any $C$-algebra $A$ and space $B$,
the map $A \to A \coprod_{C} C B$ is an inclusion of a component in a
disjoint union. 

This implies that the simplicial degeneracy maps in the bar
construction are unbased Hurewicz cofibrations and hence that the
simplicial space is proper, that is, Reedy cofibrant in the
Hurewicz/Str\o{}m model structure.  Thus the inclusion of the zero
simplices $C B \coprod_{C} X_n$ in the realization is an unbased
Hurewicz cofibration, and hence the map $X_n \to X_{n+1}$ is itself a
unbased Hurewicz cofibration.  As a map of $C$-algebras, it's also a
based map.
\end{proof}

The second result we need is the following.

\begin{Proposition}\label{t-pr-cx-cw}
Let $C$ be an operad and suppose that each $C(n)$ has the homotopy
type of a $\Sigma_{n}$-$CW$ complex.  Let $X$ be a $C$-space with the
homotopy type of a cofibrant $C$-space.  Then $CX$ has the homotopy
type of a cofibrant $C$-space and the underlying space of $X$ has the
homotopy type of a $CW$-complex.
\end{Proposition}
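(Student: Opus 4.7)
The plan is to follow the pattern of Proposition~\ref{t-pr-cof-c-alg-cof-ptd-space}. Both conclusions are invariant under weak equivalence of $C$-spaces (for the $CX$ claim, this requires the $\Sigma_n$-$CW$ hypothesis so that $C$ preserves weak equivalences between spaces of $CW$ homotopy type, as discussed below), so we may assume $X$ is a cell $C$-space, $X = \colim_n X_n$ with $X_0 = C(\emptyset)$ and $X_{n+1}$ built from $X_n$ by a pushout in $\alg{C}{\spaces}$ along $CA \to CB$, where $A \to B$ is a disjoint union of generating cofibrations of $\spaces$. The first step is to prove by induction on $n$ that the underlying space of $X_n$ has the homotopy type of a $CW$-complex and that $X_n \to X_{n+1}$ is an unbased Hurewicz cofibration of spaces; passing to the colimit will then give that the underlying space of $X$ is a $CW$-complex. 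The base case is immediate from the hypothesis that $C(0)$ is a $\Sigma_0$-$CW$ complex.

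For the inductive step I would realize $X_{n+1}$ as the geometric realization of the proper simplicial $C$-space $B_\bullet(CB, CA, X_n)$, exactly as in the proof of Proposition~\ref{t-pr-cof-c-alg-cof-ptd-space}. The $k$-th level is the coproduct $CB \coprod_C (CA)^{\coprod k} \coprod_C X_n$ in $\alg{C}{\spaces}$. Since $C$ is a left adjoint, $CY \coprod_C CZ \cong C(Y \coprod Z)$, so this level simplifies to $X_n \coprod_C C(B \coprod A^{\coprod k})$. The $\Sigma_n$-$CW$ hypothesis gives that $CZ = \coprod_n C(n) \times_{\Sigma_n} Z^n$ has $CW$ homotopy type for any $CW$ complex $Z$, and a coequalizer analysis in the spirit of the Cofibration Hypothesis of \S{VII} of \cite{EKMM} will exhibit $X_n \coprod_C CZ$ as obtained from $X_n$ by attaching free $C$-cells indexed by the cells of $Z$. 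Inductively each simplicial level then has $CW$ homotopy type, and properness of the simplicial structure (coming from the non-degenerate basepoints that are implicit in the $\Sigma_n$-$CW$ hypothesis on the operad) gives $CW$ homotopy type for the realization $X_{n+1}$.

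For the second conclusion, once $X$ has $CW$ homotopy type, fix a $CW$ approximation $\phi\colon X' \to X$ in $\spaces$. Writing $X'$ as a colimit of its skeleta and using that $C\colon \spaces \to \alg{C}{\spaces}$ is a left adjoint, $CX'$ is a cell $C$-space built from the generating cofibrations $C(S^{k-1}) \to C(D^k)$, hence cofibrant. Under the $\Sigma_n$-$CW$ hypothesis, each functor $Y \mapsto C(n) \times_{\Sigma_n} Y^n$ preserves homotopy equivalences between $CW$ complexes (by equivariant cellular approximation on the $\Sigma_n$-$CW$ complex $C(n) \times Y^n$), so $C\phi\colon CX' \to CX$ is a weak equivalence of $C$-spaces and $CX$ has the homotopy type of the cofibrant $C$-space $CX'$. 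The main obstacle will be the explicit control of the coproducts $X_n \coprod_C CZ$ appearing as the bar construction levels, since the $C$-algebra structure on $X_n$ is not free; making the coequalizer presentation precise enough to transport $CW$ homotopy type and Hurewicz cofibration conditions through the inductive pushout steps is the technical core of the argument.
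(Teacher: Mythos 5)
Your approach is essentially the paper's: reduce to a cell $C$-space and run the bar-construction cell induction of Proposition~\ref{t-pr-cof-c-alg-cof-ptd-space}, using the $\Sigma_{n}$-$CW$ hypothesis to see that $C$ applied to a space of $CW$ homotopy type again has $CW$ homotopy type (the paper cites \cite{LMS}, p.~372 for exactly this). You reorder the two conclusions, proving the underlying-space statement first and then deriving the cofibrant homotopy type of $CX$ from it; this is a reasonable clarification, since the paper's terse argument for the first conclusion (``$C$ preserves homotopies and cofibrant objects'') quietly relies on knowing that the underlying space of a cofibrant $C$-space has $CW$ homotopy type. Two small corrections of phrasing: once the underlying space of $X$ is known to have $CW$ homotopy type, your $CW$ approximation $\phi$ is automatically a homotopy equivalence of spaces, and hence $C\phi$ is a homotopy equivalence of $C$-spaces (not merely a weak equivalence) simply because $C$ is a continuous functor and so preserves homotopies --- this is exactly the ``homotopy type'' statement required, with no appeal to the $\Sigma_{n}$-$CW$ hypothesis needed at that step. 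Also, the properness of $B_{\bullet}(CB,CA,X_{n})$ is not a consequence of basepoint conditions hidden in the $\Sigma_{n}$-$CW$ hypothesis; as in the proof of Proposition~\ref{t-pr-cof-c-alg-cof-ptd-space}, it follows from the fact that for any $C$-algebra $A$ and space $B$ the map $A\to A\coprod_{C} CB$ is the inclusion of a component of a disjoint union.
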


\begin{proof}
The first statement is an easy consequence of the fact that $C$
preserves homotopies and cofibrant objects.  To see the second,
observe that the forgetful functor preserves homotopies, so it
suffices to suppose that $X$ is a cofibrant $C$-space.  Under our
hypotheses on $C$, if $A$ has the homotopy type of a CW-complex then
so does the underlying space of $CA$ (see for instance page 372 of
\cite{LMS} for a proof).  The result now follows from an
inductive argument along the lines of the preceding proposition.
\end{proof}

\subsection{$\einfty$ spaces and $\einfty$ spectra}

\label{sec:einfty-spaces-einfty-1}

Suppose that $C\to \Lin$ is an operad over $\Lin$.  In this section we
recall the proof of the following result:

\begin{Proposition}[\cite{MQRT:ersers}, \cite{LMS}
p. 366]\label{t-pr-l-spaces-l-spectra} The continuous Quillen pair
\begin{equation} \label{eq:2}
       \splus\colon \spaces \leftrightarrows \spectra\colon \linf
\end{equation}
induces by restriction a continuous Quillen adjunction
\begin{equation} \label{eq:3}
       \splus\colon \alg{\ptC}{\ptdspaces}\iso
\alg{C}{\spaces}\leftrightarrows \alg{C}{\spectra}\colon \linf
\end{equation}
between topological model categories.
\end{Proposition}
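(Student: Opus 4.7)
The plan is to verify that the underlying continuous Quillen adjunction $(\splus, \linf)$ from \eqref{eq:2} lifts to the categories of $C$-algebras, after which the Quillen pair conditions and continuity are inherited essentially formally. The lift on $\splus$ will come from the fact that $\splus$ is strong symmetric monoidal and so commutes with the free-$C$-algebra monads on each side; the lift on $\linf$ is then forced by the adjoint lifting principle.

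The main technical step I would prove first is the existence of a natural isomorphism of functors $\spaces \to \spectra$,
\[
\splus \, C X \iso C (\splus X),
\]
between the result of applying $\splus$ to the free $C$-space on $X$ and the free $C$-spectrum on $\splus X$. This should follow from the standard natural identifications $\pt{(X \times Y)} \iso \pt{X} \wedge \pt{Y}$ and $\pt{(A \times_{G} Y)} \iso \pt{A} \wedge_{G} \pt{Y}$ (for a group $G$ acting on $A$ and $Y$), applied $\Sigma_{k}$-equivariantly to identify $\splus (C(k) \times_{\Sigma_{k}} X^{k})$ with $C(k) \halfsmash_{\Sigma_{k}} (\splus X)^{\wedge k}$, followed by taking a wedge over $k$ (using that $\splus$ turns disjoint unions into wedges).

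Next, I would verify that this isomorphism is compatible with the units and multiplications of the two monads --- that is, that it realizes $\splus$ as a strong map of monads. This gives the lift of $\splus$ to a functor $\alg{C}{\spaces} \to \alg{C}{\spectra}$: a $C$-action $C X \to X$ in spaces pushes forward to the composite $C (\splus X) \iso \splus\, C X \to \splus X$. By the standard mate construction, the right adjoint $\linf$ then lifts to $\alg{C}{\spectra} \to \alg{C}{\spaces}$, using the natural transformation $C \linf \to \linf C$ obtained as the mate of $\splus C \iso C \splus$; the adjunction isomorphism $\spaces(X, \linf V) \iso \spectra(\splus X, V)$ automatically restricts to an isomorphism between morphisms of $C$-algebras.

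For the Quillen pair structure, I would invoke Propositions \ref{t-pr-C-spaces-level-model} and \ref{t-pr-C-spectra-level-model}: the model structures on both algebra categories are created by the forgetful functors to $\spaces$ and $\spectra$, so $\linf$ preserves fibrations and trivial fibrations at the algebra level precisely because it does so at the underlying level, which holds by hypothesis on \eqref{eq:2}. The topological enrichment of the lifted adjunction is inherited because the lifted functors agree with the underlying ones after forgetting, and the adjunction natural transformations are continuous. The main obstacle, insofar as there is one, is the bookkeeping required to check that the natural isomorphism $\splus C \iso C \splus$ respects the two monad structures; the remainder of the argument is formal adjoint and model-category manipulation.
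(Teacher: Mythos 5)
Your proposal is correct and follows essentially the same route as the paper: the key isomorphism $\splus C X \iso C \splus X$ is the paper's Lemma~\ref{t-le-C-commutes-with-susp} (proved via the half-smash identity from \cite{LMS}), the lift of $\linf$ via the mate of that isomorphism is exactly the explicit construction in Lemma~\ref{t-pr-l-spaces-l-spectra-adj}, and the Quillen-pair verification via the forgetful functors creating fibrations and weak equivalences is precisely the paper's concluding argument.
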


The first thing to observe is that $C$ and $\splus$ satisfy a strong
compatibility condition.

\begin{Lemma}\label{t-le-C-commutes-with-susp}
There is a natural isomorphism
\begin{equation}\label{eq:4}
    C \splus X \iso \splus C X.
\end{equation}
\end{Lemma}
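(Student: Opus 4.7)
The plan is to unwind the definitions of both monads and verify the isomorphism term by term, using three standard facts about the functor $\splus$: it is a continuous left adjoint (so preserves all colimits), it is strong symmetric monoidal (so $\splus(X\times Y)\iso \splus X\smash \splus Y$), and for an unpointed space $A$ and a spectrum $Z$ the half-smash satisfies $A\ltimes Z = \pt{A}\smash Z$ by definition.

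First I would write out the two sides:
\[
  C\splus X = \bigvee_{k\ge 0} C(k)\ltimes_{\Sigma_k}(\splus X)^{\smash k},
  \qquad
  \splus C X = \splus\!\Bigl(\coprod_{k\ge 0} C(k)\times_{\Sigma_k} X^k\Bigr).
\]
Working one $k$ at a time, I would first establish the iso
\[
  C(k)\ltimes (\splus X)^{\smash k}\iso \splus\bigl(C(k)\times X^k\bigr).
\]
This follows by combining $C(k)\ltimes(\splus X)^{\smash k}=\pt{C(k)}\smash(\sinf\pt X)^{\smash k}$, the strong symmetric monoidality $(\sinf\pt X)^{\smash k}\iso \sinf(\pt X^{\smash k})$, and the standard identifications $\pt A\smash \pt B\iso \pt{(A\times B)}$ and $\sinf\pt Y=\splus Y$; iterating gives $\pt{C(k)}\smash \sinf\pt X^{\smash k}\iso \splus(C(k)\times X^k)$.

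Next I would pass to $\Sigma_k$-orbits on both sides. Since $\splus$ is a left adjoint, it commutes with the coequalizer defining the orbit space, and this $\Sigma_k$-action matches the one induced on the spectrum level (both come from the diagonal action on $C(k)\times X^k$), so the isomorphism descends to
\[
  C(k)\ltimes_{\Sigma_k}(\splus X)^{\smash k}\iso \splus\bigl(C(k)\times_{\Sigma_k} X^k\bigr).
\]
Finally, I would take the coproduct over $k$: $\splus$ preserves coproducts (indexed coproducts of pointed spaces agree with wedges after adding a disjoint basepoint, and $\sinf$ is a left adjoint), so wedging over $k$ on the left matches $\splus$ of the disjoint union on the right. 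Naturality in $X$ is automatic from the naturality of each ingredient.

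There is no real obstacle here; the only point requiring a little care is checking that the $\Sigma_k$-actions are compatibly matched under the strong symmetric monoidal structure of $\splus$ (i.e., that the symmetry isomorphisms on the spectrum side are induced from those on spaces), which is part of the definition of a symmetric monoidal functor. The statement is essentially a formal consequence of $\splus$ being a strong symmetric monoidal left adjoint, with the operad combinatorics playing no role beyond bookkeeping.
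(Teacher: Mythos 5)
Your plan has the same overall architecture as the paper's proof --- establish a term-by-term isomorphism in $k$, then use that $\splus$ is a left adjoint to pass to the coproduct over $k$ --- and the paper in fact carries out exactly this outline, citing \cite[VI, Prop.~1.5]{LMS} for the one non-formal step, namely the isomorphism
\[
C(k)\ltimes_{\Sigma_k}(\splus X)^{\smash k}\iso \splus\bigl(C(k)\times_{\Sigma_k}X^k\bigr).
\]
Where your write-up goes wrong is in claiming that this step is ``by definition'' and that the lemma is ``essentially a formal consequence of $\splus$ being a strong symmetric monoidal left adjoint.'' This section works in the Lewis--May--Steinberger category $\spectra$, and the operad $C$ comes with a map $C\to\Lin$; so in the formula $CV=\bigvee_{k}C(k)\ltimes_{\Sigma_k}V^{\smash k}$ the symbol $\ltimes$ denotes the \emph{twisted} half-smash product (the spectrum on the right lives over $\universe{U}^{\oplus k}$ and the twisting by linear isometries is how one lands back in spectra over $\universe{U}$). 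It is not true by definition that $A\ltimes Z\iso \pt{A}\smash Z$ in this setting, nor is $\sinf$ naively strong symmetric monoidal for the LMS smash product, which also involves a change of universe. The correct statement is that the twisted half-smash of a space with a \emph{suspension} spectrum does untwist, i.e.\ $A\ltimes \splus X\iso \splus(A\times X)$; this is precisely the nontrivial content of \cite[VI, Prop.~1.5]{LMS} that the paper invokes, not a tautology, and it is the real reason the lemma holds in the LMS framework.

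So: your decomposition (unquotiented term-by-term iso, then $\Sigma_k$-orbits, then coproduct) is a perfectly good refinement of the paper's argument, and the orbit-passage and coproduct steps are fine. But the core ingredient cannot be dispatched as ``by definition''; you need to either cite the LMS untwisting result for suspension spectra or give an argument that genuinely engages the twisted half-smash. As written, your argument would be correct in a naive symmetric monoidal category of spectra (e.g.\ symmetric spectra), which is not the setting of this section.
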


\begin{proof}
It follows from \S{VI}, Proposition 1.5 of \cite{LMS} that, if
$X$ is a space, then
\[
   C (k) \ltimes_{\Sigma_{k}} (\splus X)^{\wedge k} \iso \splus (C (k)
\times_{\Sigma_{k}} X^{k}),
\]
and so
\begin{align*}
   C \splus X = \bigvee_{k\geq 0} C(k) \ltimes_{\Sigma_{k}} (\splus
X)^{\wedge k} \iso \bigvee_{k\geq 0} \splus (C (k)\times X^{k}) \\
\iso
\splus\left(\coprod_{k\geq 0} C (k) \times X^{k} \right) = \splus CX.
\end{align*}
\end{proof}

Next we have the following, from \cite[p. 366]{LMS}.

\begin{Lemma}  \label{t-pr-l-spaces-l-spectra-adj}
The adjoint pair
\begin{equation} \label{eq:23}
       \splus\colon \spaces \leftrightarrows \spectra\colon \linf
\end{equation}
induces an adjunction
\begin{equation} \label{eq:36}
       \splus\colon \alg{C}{\spaces}\leftrightarrows \alg{C}{\spectra}\colon
\linf
\end{equation}
and so also
\[
       \splus\colon \alg{\ptC}{\ptdspaces}\iso
\alg{C}{\spaces}\leftrightarrows \alg{C}{\spectra}\colon \linf
\]
\end{Lemma}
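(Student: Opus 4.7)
The plan is to deduce the adjunction on categories of $C$-algebras from the base adjunction $(\splus, \linf)$ together with Lemma \ref{t-le-C-commutes-with-susp}, which asserts that $\splus$ commutes with the free $C$-algebra functor. This is the standard pattern for lifting an adjunction along a pair of compatible monads: if $F\colon \aC \to \D$ is left adjoint to $G$, and we have a natural isomorphism $\sigma\colon FT \xra{\iso} SF$ compatible with the monad structure maps of $T$ on $\aC$ and $S$ on $\D$, then $F$ lifts to algebras and the induced functor automatically has a right adjoint whose underlying object is $GE$ equipped with a canonically defined algebra structure.

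First, I would upgrade the isomorphism $\splus C \iso C \splus$ from Lemma \ref{t-le-C-commutes-with-susp} to a morphism of monads, checking compatibility with both the unit $\id\to C$ and the multiplication $CC\to C$. This is a routine verification using naturality, the explicit level-wise description of the two monads (via $C (k)\ltimes_{\Sigma_k} (-)^{\wedge k}$ on spectra and $C (k)\times_{\Sigma_k} (-)^{k}$ on spaces), and the compatibility of $\splus$ with smash products and products as in \cite[VI.1.5]{LMS}.

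Given this compatibility, the lift of $\splus$ to $\alg{C}{\spaces}\to \alg{C}{\spectra}$ is immediate: a $C$-space $X$ with structure map $\mu\colon CX\to X$ goes to $\splus X$ equipped with $C\splus X \iso \splus CX \xra{\splus \mu} \splus X$, and the algebra axioms follow formally from those of $X$ together with the monad-map property. Dually, for a $C$-spectrum $E$ with structure map $\nu\colon CE\to E$, one endows $\linf E$ with a $C$-space structure via the adjoint of the composite
\[
\splus C\linf E \iso C\splus \linf E \xra{C\varepsilon} CE \xra{\nu} E,
\]
where $\varepsilon\colon \splus \linf \to \id$ is the counit of the base adjunction.

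Finally, I would verify the adjunction bijection
\[
\alg{C}{\spectra} (\splus X, E) \iso \alg{C}{\spaces} (X, \linf E)
\]
by showing that under the underlying adjunction isomorphism $\spectra (\splus X, E) \iso \spaces (X, \linf E)$, a map commutes with the $C$-action on one side if and only if its adjoint does so on the other; this is a short diagram chase using the monad-map property of $\sigma$ together with naturality of $\varepsilon$. The same argument works in the topologically enriched setting, and the equivalence $\alg{C}{\spaces}\iso \alg{\ptC}{\ptdspaces}$ from \eqref{eq:43} then yields the remaining form of the adjunction. The main obstacle is really bookkeeping: verifying that the natural isomorphism of Lemma \ref{t-le-C-commutes-with-susp} is actually a morphism of monads, not merely an isomorphism of underlying endofunctors; all other steps are formal consequences of this compatibility combined with standard properties of the base adjunction.
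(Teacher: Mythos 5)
Your proposal is correct and follows essentially the same route as the paper: use the isomorphism $C\splus \iso \splus C$ from Lemma~\ref{t-le-C-commutes-with-susp} to endow $\splus X$ with a $C$-algebra structure via $C\splus X \iso \splus CX \xra{\splus\mu} \splus X$, and endow $\linf A$ with a $C$-space structure via the adjoint of $\splus C\linf A \iso C\splus\linf A \to CA \to A$ using the counit. The paper's proof leaves implicit the check that the natural isomorphism is a morphism of monads and that the adjunction bijection restricts to algebra maps; your proposal spells these out explicitly, but the underlying argument is the same.
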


\begin{proof}
We show that the adjunction \eqref{eq:23} restricts to the
adjunction \eqref{eq:36}.  If $X$ is a $C$-space with structure map
$\mu\colon CX \to X$, then, using the isomorphism \eqref{eq:4}, $\splus X$
is a $C$-algebra via
\[
     C\splus X \iso \splus C X \xra{\splus \mu} \splus X.
\]
If $A$ is a $C$-spectrum, then $\linf A$ is a $C$-space via
\[
    C \linf A \rightarrow \linf C A \xra{} \linf A.
\]
The second map is just $\linf$ applied to the $C$-structure on $A$;
the first map is the adjoint of the map
\[
    \splus C \linf A \iso C \splus \linf A \rightarrow C A
\]
obtained using the counit of the adjunction.
\end{proof}

This adjunction allows us to prove the pointed analogue of Lemma
\ref{t-le-C-commutes-with-susp}.

\begin{Lemma}[\cite{LMS}, \S VII, Prop. 3.5] \label{t-le-ptC-adjunction}
If $C$ is a unital operad over $\Lin,$ then there is a natural isomorphism
\begin{equation} \label{eq:45}
    \splus\ptC Y \iso \ptC \splus Y \iso C\sinf Y.
\end{equation}
\end{Lemma}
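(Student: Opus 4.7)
The plan is to apply $\splus$ to the defining pushout \eqref{eq:42} of $\ptC Y$, recognize the resulting pushout of $C$-spectra as the defining pushout \eqref{eq:41} of $\ptC\splus Y$, and then pass to $C\sinf Y$ by means of \eqref{eq:44} together with the standard splitting $\splus Y\iso S\vee \sinf Y$ available for a pointed space.

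First I would invoke Lemma~\ref{t-pr-l-spaces-l-spectra-adj}, which tells us that $\splus\colon \alg{C}{\spaces}\to \alg{C}{\spectra}$ is a left adjoint and therefore preserves pushouts of $C$-algebras. Applying $\splus$ to \eqref{eq:42}, then using $\splus\ptspace = S$ together with the natural isomorphism $\splus C\iso C\splus$ of Lemma~\ref{t-le-C-commutes-with-susp}, produces a pushout of $C$-spectra
\[
\xymatrix{
CS \ar[r] \ar[d] & S \ar[d]\\
C\splus Y \ar[r] & \splus \ptC Y,
}
\]
in which the left vertical is $C$ applied to the map $S=\splus\ptspace \to \splus Y$ induced by the basepoint $y_0\colon \ptspace \to Y$. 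This is precisely the pushout \eqref{eq:41} defining $\ptC V$ for $V=\splus Y$ viewed as a spectrum under $S$ via that structure map, yielding the first isomorphism $\splus\ptC Y\iso \ptC\splus Y$.

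For the second isomorphism, I would invoke \eqref{eq:44}, which gives $C\sinf Y\iso \ptC(S\vee \sinf Y)$, and then identify $\splus Y\iso S\vee \sinf Y$ as spectra under $S$. The identification comes from the canonical pointed homeomorphism $Y_+\iso S^0\vee Y$ (swapping the added disjoint basepoint of $Y_+$ with $y_0\in Y$) followed by $\sinf$; a direct inspection shows that the map $\splus y_0\colon S\to \splus Y$ then corresponds to the inclusion of the first wedge summand, so the two $S$-under structures agree. Combining the two identifications gives $\ptC\splus Y\iso \ptC(S\vee \sinf Y)\iso C\sinf Y$.

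I expect the only point that requires any actual checking, rather than formal manipulation, is the compatibility of the two $S$-under structures on $\splus Y$ in the last step. Everything else follows from $\splus$ being a continuous left adjoint between categories of $C$-algebras and the commutation of $\splus$ with the free-algebra monad supplied by Lemma~\ref{t-le-C-commutes-with-susp}.
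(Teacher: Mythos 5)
Your proposal is correct and follows essentially the same route as the paper: apply $\splus$ to the defining pushout \eqref{eq:42} using Lemma~\ref{t-pr-l-spaces-l-spectra-adj} and the isomorphism $\splus C\iso C\splus$ to identify $\splus\ptC Y$ with the pushout \eqref{eq:41} defining $\ptC\splus Y$, then invoke \eqref{eq:44} together with the splitting $\splus Y\iso S\vee\sinf Y$. Your extra care about matching the two $S$-under structures on $\splus Y$ is a reasonable elaboration the paper leaves implicit.
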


\begin{proof}
Let $Y$ be a pointed space.  By Lemma
\ref{t-pr-l-spaces-l-spectra-adj} and the isomorphism \eqref{eq:4},
applying the left adjoint $\splus$ to the pushout diagram
\eqref{eq:42} defining $\ptC Y$ identifies $\splus \ptC Y$ with the
pushout of the diagram \eqref{eq:41} defining $\ptC \splus Y.$ The
second isomorphism is just the isomorphism \eqref{eq:44} together with
the isomorphism (for pointed spaces) $Y$
\[
    \splus Y \iso \sinf (S \vee Y).
\]
\end{proof}

\begin{proof}[Proof of Proposition \ref{t-pr-l-spaces-l-spectra}] It
remains to show that the adjoint pair $(\splus,\linf)$ induces a
Quillen adjunction.  For this it suffices to show that the right
adjoint $\linf$ preserves fibrations and weak equivalences (see, for
example, \cite[Lemma 1.3.4]{Hovey:MC}).  Now recall that the forgetful
functor $\alg{C}{\spectra}\to\spectra$ creates fibrations and weak
equivalences, and similarly for $\spaces$ \cite{EKMM,MR1806878}.  It
follows that the functor
\[
     \linf\colon \alg{C}{\spectra}\to \alg{C}{\spaces}
\]
preserves fibrations and weak equivalences, since
\[
     \linf\colon \spectra\rightarrow \spaces
\]
does.
\end{proof}

\begin{Remark}
Note that if $A$ is an $\einfty$ ring spectrum, then $\linf A$ is an
$\einfty$ space in two ways: one is described above, and arises from
the multiplication on $A$.  The other arises from the additive
structure of $A$, i.e. the fact that $\linf A$ is an infinite loop
space.  Together these two $\einfty$ structures give an $\einfty$ ring
space in the sense of \cite{MQRT:ersers} (see also \cite{May:rant}).
\end{Remark}

\subsection{$\einfty$ spaces and group-like $\einfty$ spaces}
\label{sec:einfty-spaces-group}

Suppose that $C$ is a unital $E_\infty$ operad, and let
$X$ be a $C$-algebra in spaces.  The structure maps
\begin{align*}
  \ptspace \to C (0) &\to X \\
  C (2)\times X \times X &\to X
\end{align*}
correspond to a family of $H$-space structures on $X$ and give to
$\pi_{0}X$ the structure of a monoid. %\textbf{other hypotheses on C?}

\begin{Definition}  \label{def-1}
$X$ is said to be \emph{group-like} if $\pi_{0}X$ is a group.  We
write $\gplike{C}{\spaces}$ for the full subcategory of
$\alg{C}{\spaces}$ consisting of group-like $C$-spaces.
\end{Definition}

Note that if $f\colon X\to Y$ is a weak equivalence of $C$-spaces, then $X$
is group-like if and only $Y$ is.

\begin{Definition}\label{def-2}
We write $\Ho \gplike{C}{\spaces}$ for the image of
$\gplike{C}{\spaces}$ in $\Ho \alg{C}{\spaces}$.  It is the full
subcategory of homotopy types represented by group-like spaces.
\end{Definition}

If $X$ is a $C$-space, notice that $\GL{X}$ defined as in
Definition~\ref{def:gl1} is a group-like $C$-space.

\begin{Proposition}  \label{t-pr-GL}
The functor $\GLsym$ is the right adjoint of the inclusion
\[
        \gplike{C}{\spaces} \rightarrow \alg{C}{\spaces}
\]
\end{Proposition}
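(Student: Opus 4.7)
The plan is to establish this as an honest reflective adjunction: the functor $\GLsym$ picks out a full $C$-subspace of $X$ consisting of path components lying over the units of the monoid $\pi_{0}X$, and any $C$-map out of a group-like object automatically lands in this subspace.

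First I would verify (for clarity, since it is only asserted parenthetically above) that $\GL{X}$ inherits a $C$-action from $X$. Given $c \in C(k)$ and $x_{1},\dots,x_{k} \in \GL{X}$, let $\theta\colon C(k)\times X^{k}\to X$ denote the action. The class $[\theta(c;x_{1},\dots,x_{k})]\in\pi_{0}X$ depends only on the classes $[x_{i}]$, because the components of $C(k)$ are path-connected in the $E_{\infty}$ case. Moreover, the operad structure (specifically, that any element of $C(k)$ is operadically built from elements of $C(2)$ and the unit $C(0)\to C(1)$, up to the contractible space of choices) identifies this class with the $k$-fold product $[x_{1}]\cdots[x_{k}]$ in the monoid $\pi_{0}X$. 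Since $(\pi_{0}X)^{\times}$ is closed under multiplication, the action $\theta$ restricts to $C(k)\times \GL{X}^{k}\to \GL{X}$. Because $\pi_{0}X$ has the discrete topology, the pullback $\GL{X}$ is precisely the union of those path components of $X$ lying over $(\pi_{0}X)^{\times}$, so it is an open-and-closed $C$-subspace with $\pi_{0}\GL{X}=(\pi_{0}X)^{\times}$, which is a group by construction.

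Next I would prove the adjunction. Let $Y$ be a group-like $C$-space and let $f\colon Y\to X$ be any morphism in $\alg{C}{\spaces}$. The induced map $\pi_{0}f\colon \pi_{0}Y\to \pi_{0}X$ is a homomorphism of monoids (by step 1 applied to both sides), so it sends the group $\pi_{0}Y$ into $(\pi_{0}X)^{\times}$. Consequently $f$ carries every path component of $Y$ into a path component of $X$ contained in $\GL{X}$. Since $\GL{X}\hookrightarrow X$ is the inclusion of a union of path components, there is a unique continuous factorization $\tilde f\colon Y\to \GL{X}$. Because the $C$-action on $\GL{X}$ is literally the restriction of the action on $X$, the map $\tilde f$ is automatically a $C$-map, and uniqueness is automatic from the injectivity of the inclusion. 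This establishes the desired natural bijection
\[
\gplike{C}{\spaces}(Y,\GL{X})\;\xrightarrow{\;\cong\;}\;\alg{C}{\spaces}(Y,X),
\]
which is exactly the assertion that $\GLsym$ is right adjoint to the inclusion.

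The only delicate step is the first one: identifying the map on $\pi_{0}$ induced by an arbitrary operadic operation with the monoid product. The rest is essentially formal, since $\GL{X}$ is an inclusion of components and continuity and $C$-equivariance of the factorization follow automatically. If one prefers to avoid discussing arbitrary $k$-ary operations directly, one can replace step 1 by the observation that the two $H$-space structures on $X$ arising from any two points of $C(2)$ are homotopic (so $\pi_{0}X$ has a well-defined monoid structure) and that $C(0)\to X$ furnishes the unit; the general closure statement then follows by induction on arity using operadic composition with $C(2)$, giving the same conclusion.
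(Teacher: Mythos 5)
Your proof is correct and takes essentially the same approach as the paper; the paper states the bijection $\alg{C}{\spaces}(Y,X)\cong\gplike{C}{\spaces}(Y,\GL{X})$ and justifies it only by analogy to the discrete group/monoid case, whereas you spell out the underlying argument: $\GL{X}$ is a union of path components closed under the operad action, and any $C$-map from a group-like $Y$ induces a monoid homomorphism on $\pi_{0}$ that carries the group $\pi_{0}Y$ into $(\pi_{0}X)^{\times}$, forcing the factorization.
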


\begin{proof}
If $X$ is a group-like $C$-space, and $Y$ is a $C$-space, then
\[
\alg{C}{\spaces} (X,Y) \iso \gplike{C}{\spaces} (X,\GL{Y});
\]
just as, if $G$ is a group and $M$ is a monoid, then
\[
    \CatOf{monoids} (G,M) = \CatOf{groups} (G,\GL{M}).
\]
\end{proof}

\subsection{Group-like $\einfty$ spaces and connective spectra}
\label{sec:einfty-spac-conn}

A guiding result of infinite loop space theory is that group-like
$\einfty$ spaces provide a model for connective spectra.  We take a
few pages to show how the primary sources (in particular
\cite{MR0420609,May:gils,MR0339152}) may be used to prove a
formulation of this result in the language of model categories.

To begin, suppose that $C$ is a \emph{unital} $\einfty$ operad, and
$f$ is a map of monads (on pointed spaces)
\[
    f\colon \ptC \to Q \eqdef \linf \sinf.
\]
For example, we can take $C$ to be a unital $\einfty$ operad over the
infinite little cubes operad, but it is interesting to note that any
map of monads will do. If $V$ is a spectrum, then $\linf V$ is
a group-like $C$-algebra, via the map
\[
     \ptC \linf V \xra{f} \linf \sinf \linf V \to \linf V.
\]
Thus we have a factorization
\begin{equation} \label{eq:21}
\xymatrix{ {\spectra} \ar[r]^-{\Omega^{f}} \ar[dr]_-{\linf} &
{\gplike{C}{\spaces}} \ar[d]
\\
& {\ptdspaces} }
\end{equation}
We next show that the functor $\Omega^{f}$ has a left adjoint
$\Sigma^{f}.$ By regarding a $C$-space $X$ as a pointed space via
$\ptspace \to C (0) \to X$, we may form the spectrum $\sinf X$.  Let
$\Sigma^{f} X$ be the coequalizer in the diagram of spectra
\[
\xymatrix{ {\sinf \ptC X} \ar@<.3ex>[rr]^{\sinf \mu} \ar@<-.3ex>[rr]
\ar[dr]_{\sinf f} & & {\sinf X} \ar[r] & {\Sigma^{f} X.}
\\
& {\sinf \linf \sinf X} \ar[ur] }
\]
Then we have the following.

\begin{Lemma} \label{t-le-Sigma-f-C-sinf}
The pair
\begin{equation} \label{eq:27}
   \Sigma^{f}\colon \alg{C}{\spaces} \leftrightarrows \spectra\colon \Omega^{f}
\end{equation}
is a Quillen pair.  Moreover, the natural transformation
\[
     \Sigma^{f}\ptC \to \sinf
\]
is an isomorphism.
\end{Lemma}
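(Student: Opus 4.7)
My plan has two parts, matching the two claims of the lemma. I would first establish the adjunction $(\Sigma^f, \Omega^f)$, then the Quillen property, and finally derive the free-object isomorphism as a formal consequence of uniqueness of adjoints.

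First I would verify directly from the coequalizer definition that $\Sigma^f$ is left adjoint to $\Omega^f$. Given a spectrum $Y$, a map $\Sigma^f X \to Y$ is, by the universal property of the coequalizer, a map $g\colon \sinf X \to Y$ in $\spectra$ that coequalizes the two parallel arrows out of $\sinf \ptC X$. Applying the $(\sinf,\linf)$-adjunction, $g$ corresponds to a pointed map $\bar g\colon X \to \linf Y$. The coequalizer condition, after chasing through the definitions of the two parallel maps (one from the $\ptC$-action on $X$, the other from $f$ composed with the counit), is precisely the assertion that $\bar g$ is a map of $C$-spaces, where $\linf Y = \Omega^f Y$ carries the $C$-structure from \eqref{eq:21}. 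This gives a natural bijection $\spectra(\Sigma^f X, Y) \cong \alg{C}{\spaces}(X, \Omega^f Y)$, and the argument is continuous so the adjunction is enriched.

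Second, the Quillen property is immediate from what has already been set up. The model structure on $\alg{C}{\spaces}$ has fibrations and weak equivalences created by the forgetful functor to $\ptdspaces$ (Proposition \ref{t-pr-C-spaces-level-model}), and by inspection of diagram \eqref{eq:21} the underlying pointed space of $\Omega^f Y$ is just $\linf Y$. Since $\linf\colon \spectra \to \ptdspaces$ is a right Quillen adjoint (part of the Quillen pair \eqref{eq:2}), it preserves fibrations and acyclic fibrations, and hence so does $\Omega^f$. By the standard criterion (e.g.\ \cite[Lemma 1.3.4]{Hovey:MC}), $(\Sigma^f,\Omega^f)$ is a Quillen pair.

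For the isomorphism $\Sigma^f \ptC \to \sinf$, I would invoke uniqueness of left adjoints. The forgetful functor $\alg{C}{\spaces} \to \ptdspaces$ has left adjoint $\ptC$, so we may compose adjunctions:
\[
\ptdspaces \xymatrix{\ar@<.5ex>[r]^-{\ptC} & \ar@<.5ex>[l]^-{U}} \alg{C}{\spaces} \xymatrix{\ar@<.5ex>[r]^-{\Sigma^f} & \ar@<.5ex>[l]^-{\Omega^f}} \spectra.
\]
The composite right adjoint $U \circ \Omega^f$ is, by \eqref{eq:21}, naturally equal to $\linf$. Since $\sinf$ is the left adjoint to $\linf$, uniqueness of adjoints yields a canonical natural isomorphism $\Sigma^f \ptC \cong \sinf$, and unwinding the definitions shows this isomorphism coincides with the natural transformation in the statement.

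The main (and only) obstacle is the first step: one has to be careful to identify the two parallel maps in the coequalizer defining $\Sigma^f$ with the data that distinguishes a $C$-space map $X \to \linf Y$ from an arbitrary pointed map. Once that identification is clean, both remaining parts are formal consequences of adjunction uniqueness and the fact that $\linf$ is a right Quillen functor.
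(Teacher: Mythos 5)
Your proposal is correct and follows essentially the same route as the paper's proof: both treat the adjunction as a formal consequence of the coequalizer construction, both verify the Quillen property by showing $\Omega^f$ preserves fibrations and weak equivalences using the facts that the forgetful functor creates them in $\alg{C}{\spaces}$ and $\linf$ preserves them, and both obtain $\Sigma^f\ptC\cong\sinf$ from the chain of adjunctions $(\ptC,U)$, $(\Sigma^f,\Omega^f)$, $(\sinf,\linf)$ together with the identification $U\circ\Omega^f=\linf$ (the paper writes out the hom-set isomorphisms explicitly; you phrase it as uniqueness of left adjoints, which is the same argument).
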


\begin{proof}
As mentioned in the proof of Proposition \ref{t-pr-D-tens-C}, it is
essentially a formal consequence of the construction that $\Sigma^{f}$
is the left adjoint of $\Omega^{f}$.  Given the adjunction, we find
that $\Sigma^{f}\ptC\iso \sinf$, since, for any pointed space $X$ and
any spectrum $V$, we have
\begin{align*}
\spectra (\Sigma^{f}\ptC X,V) & \iso
   \alg{C}{\spaces} (\ptC X,\Omega^{f}V) \\
  & \iso  \ptdspaces (X,\linf V) \\
  & \iso \spectra (\sinf X, V).
\end{align*}
To show that we have a Quillen pair, it suffices (\cite[Lemma
  1.3.4]{Hovey:MC}) to show that $\Omega^{f}$ preserves weak
equivalences and fibrations.  This follows from the commutativity of
the diagram \eqref{eq:21}, the fact that $\linf$ preserves weak
equivalences and fibrations, and the fact that the forgetful functor
\[
\alg{C}{\spaces} \rightarrow \spaces
\]
creates fibrations and weak equivalences.
\end{proof}

Lemma \ref{t-le-Sigma-f-C-sinf} implies that the pair
$(\Sigma^{f},\Omega^{f})$ induce a continuous Quillen adjunction
\[
   \Sigma^{f}\colon \alg{C}{\spaces} \leftrightarrows \spectra\colon
\Omega^{f}.
\]
It is easy to see that this cannot be a Quillen equivalence.  Instead,
one expects that it induces an equivalence between the homotopy
categories of \emph{group-like} $C$-spaces and \emph{connective}
spectra.  In~\cite[0.10]{MR1806878}, this situation is called a ``connective
Quillen equivalence.''  The rest of this subsection is devoted to the
proof of the following result along these lines:

\begin{Theorem}  \label{t-th-einfty-spaces-spectra}
Suppose that $C$ is a unital $E_\infty$ operad, equipped with a map of monads
\[
      f\colon C \to \linf \sinf.
\]
Suppose moreover that
\begin{enumerate}
\item the base point $\ptspace \to C (1)$ is non-degenerate, and
\item for each $n$, the $n$-space $C (n)$ has the homotopy type of a $\Sigma_{n}$-$CW$-complex.
\end{enumerate}
Then the adjunction $(\Sigma^{f},\Omega^{f})$
induces an equivalence of categories
\[
\xymatrix{ {\Sigma^{f}\colon \ho \gplike{C}{\spaces}} \ar@<.3ex>[r] & {\ho
\CatOf{connective spectra}\colon \Omega^{f}} \ar@<.3ex>[l] }
\]
enriched over $\ho \spaces.$
\end{Theorem}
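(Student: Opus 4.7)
The plan is to verify that the Quillen adjunction $(\Sigma^f,\Omega^f)$ becomes an equivalence after restriction to group-like $C$-spaces on the left and connective spectra on the right, by checking separately that the derived unit is a weak equivalence on group-like $C$-spaces and the derived counit is a weak equivalence on connective spectra. Since $\Omega^f$ preserves all weak equivalences (its underlying functor is $\linf$ composed with a forgetful functor), it is harmless to work at the derived level throughout.

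First I would give a bar construction model for $\Sigma^f$. Every $C$-space $X$ has the canonical monadic resolution $B_\bullet(\ptC,\ptC,X)$, whose $k$-simplices $\ptC^{k+1}X$ are free $C$-spaces and whose realization is weakly equivalent to $X$. Applying $\Sigma^f$ and using the natural isomorphism $\Sigma^f\ptC\iso\sinf$ of Lemma~\ref{t-le-Sigma-f-C-sinf}, together with the fact that $\Sigma^f$ is a continuous left adjoint (so commutes with realization), yields $\Sigma^f X \simeq |B_\bullet(\sinf,\ptC,X)|$. The hypotheses that $C(n)$ has $\Sigma_n$-CW type and that $C(1)$ has a nondegenerate basepoint, together with Propositions~\ref{t-pr-cof-c-alg-cof-ptd-space} and~\ref{t-pr-cx-cw}, ensure that the levels $\sinf\ptC^{k+1}X$ are tame spectra and that the simplicial object is proper, so by \cite[X.2.4]{EKMM} the realization has the correct homotopy type and may be used to compute the derived functor.

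For the counit at a connective spectrum $V$, taking $X=\linf V$ in the bar model gives $\Sigma^f\linf V \simeq |B_\bullet(\sinf,\ptC,\linf V)|$, and the counit becomes the canonical map from this to $V$ induced by the monad map $f$. This is exactly the map studied in May's recognition theorem \cite{May:gils}: it is a weak equivalence onto the connective cover of $V$, hence an equivalence precisely when $V$ is connective. For the unit at a cofibrant $C$-space $X$, the unit is modelled by $X\simeq|B_\bullet(\ptC,\ptC,X)|\to\linf|B_\bullet(\sinf,\ptC,X)|$, which upon passing through $f$ factors as the group-completion map of $X$. By the group completion theorem for $E_\infty$ spaces (see \cite{May:gils,MR0420609,MR0339152}), this is a weak equivalence precisely when $\pi_0 X$ is already a group, i.e.\ when $X$ is group-like. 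The hypotheses on $C$ enumerated in the statement are exactly those required by the classical proofs of the recognition/group-completion theorem.

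The enrichment over $\ho\spaces$ is automatic from the continuity of the Quillen adjunction. The matching of the two restrictions is clean: $\Sigma^f X$ is always connective since it is a homotopy colimit built from suspension spectra, while $\linf V$ is always group-like for any spectrum $V$. The main obstacle is the group completion theorem itself, which is the only non-formal ingredient; the rest of the argument is bookkeeping with the bar construction, cofibrancy, and tameness. An alternative (and perhaps cleaner) route would be to replace $C$ via Corollary~\ref{t-co-ho-einfty-well-defined} by a convenient operad for which the group-completion statement is already in the literature in exactly the form needed, then transfer the equivalence back through the zig-zag of Quillen equivalences.
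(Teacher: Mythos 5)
Your proposal is essentially correct and follows the same central route as the paper: pass to the monadic bar resolution $B_\bullet(\ptC,\ptC,X)$, use $\Sigma^f\ptC\iso\sinf$ and continuity of $\Sigma^f$ to model $\Sigma^f X$ by $|B_\bullet(\sinf,\ptC,X)|$, invoke group completion to handle the unit, and finish by a connectivity observation. The cofibrancy/tameness/properness bookkeeping you invoke is exactly what Propositions~\ref{t-pr-cof-c-alg-cof-ptd-space}, \ref{t-pr-cx-cw} and Lemmas~\ref{t-le-BCCX-to-X}, \ref{t-le-BCC-cof}, \ref{t-le-gp-completion} supply in the paper.

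The organizational difference is this. You verify the derived unit and counit separately on the restricted subcategories, attributing the counit statement to May's recognition theorem. The paper instead feeds the situation into a single criterion (Lemma~\ref{t-le-conn-equiv}): it suffices to show that, for cofibrant group-like $X$ and connective $V$, a map $\phi\colon\Sigma^f X\to V$ is a weak equivalence iff its adjoint $\psi\colon X\to\Omega^f V$ is. Since $\psi$ factors through the unit, once the unit is a weak equivalence (group completion, Proposition~\ref{t-pr-unit-we-May}), one only needs to note that a map of $(-1)$-connected spectra is a weak equivalence iff $\Omega^\infty$ of it is. This sidesteps any direct analysis of the counit; your counit discussion is correct but strictly more work than necessary. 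The trade-off: the paper's route is more economical, while your route makes the recognition-theorem content explicit at the cost of invoking May in both directions.

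One point you pass over too quickly, which the paper itself flags as the hardest step: in identifying the unit with a group-completion map, one must know that $\Omega^f$ commutes with the geometric realization $|\Sigma^f B_\bullet(\ptC,\ptC,X)|$ up to weak equivalence. This reduces to May's statement \cite[12.3]{May:gils} that $|\Omega Y_\bullet|\to\Omega|Y_\bullet|$ is a weak equivalence for suitable simplicial spaces, and it is precisely here that the hypotheses on $C$ and the properness/tameness machinery earn their keep. Your phrase ``upon passing through $f$ factors as the group-completion map'' implicitly relies on this but does not surface it; a complete write-up would need to isolate this step. Your alternative suggestion of transporting along Corollary~\ref{t-co-ho-einfty-well-defined} to a convenient operad is also viable, though it replaces this analysis with a comparison of Quillen equivalences rather than eliminating it.
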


\begin{Remark}
As observed in \cite[A.8]{May:gils}, adding a whisker to a degenerate
basepoint produces a new operad $C'$ from $C.$   Also if $C$ is a
unital $\einfty$ operad equipped with a map of monads $f\colon C\to \linf
\sinf$, then taking the geometric realization of the
singular complex of the spaces $C (n)$ produces
an operad $|S (C)|$ with the properties we require.
\end{Remark}

The following Lemma, easily checked, is implicit in \cite{MR1806878}.
Let
\[
   F\colon \mathcal{M} \leftrightarrows \mathcal{M'}\colon G
\]
be a Quillen adjunction between topological closed model categories.
Let $\mathcal{C} \subseteq \mathcal{M}$ and $\mathcal{C'} \subseteq
\mathcal{M'}$ be full subcategories, stable under weak equivalence, so
we have sensible subcategories $\ho \mathcal{C} \subseteq \ho
\mathcal{M}$ and $\ho \mathcal{C'} \subseteq \ho \mathcal{M'}.$
Suppose that $F$ takes $\mathcal{C}$ to $\mathcal{C'}$, and $G$ takes
$\mathcal{C'}$ to $\mathcal{C}.$

\begin{Lemma}\label{t-le-conn-equiv}
If, for every cofibrant $X\in \mathcal{C}$ and every fibrant $Y\in
\mathcal{C'}$, a map
\[
     \phi\colon FX \to Y
\]
is a weak equivalence if and only if its adjoint
\[
     \psi\colon X \to GY
\]
is, then $F$ and $G$ induce equivalences
\[
    F\colon \ho \mathcal{C} \leftrightarrows \ho \mathcal{C'}\colon G
\]
of categories enriched over $\ho \spaces.$ %\qed
\end{Lemma}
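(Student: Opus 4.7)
The strategy is to verify the standard characterization of a Quillen equivalence, but executed inside the full subcategories $\mathcal{C}$ and $\mathcal{C'}$. The key enabling observation is that since $\mathcal{C}$ and $\mathcal{C'}$ are stable under weak equivalence, fibrant and cofibrant replacements of objects in either subcategory again lie in that subcategory. Coupled with the hypotheses $F(\mathcal{C}) \subseteq \mathcal{C'}$ and $G(\mathcal{C'}) \subseteq \mathcal{C}$, this lets the total derived functors $\L F$ and $\R G$ restrict to an adjoint pair $\L F \colon \ho \mathcal{C} \leftrightarrows \ho \mathcal{C'} \colon \R G$ between the homotopy categories of the subcategories.

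To see that this restricted pair is an adjoint equivalence, I would check that the derived unit and derived counit are weak equivalences on the respective subcategories. Given cofibrant $X \in \mathcal{C}$, choose a fibrant replacement $FX \to (FX)^{f}$ in $\mathcal{M'}$; by stability $(FX)^{f} \in \mathcal{C'}$, so the weak equivalence $\phi \colon FX \to (FX)^{f}$ has as its adjoint the map $\psi \colon X \to G(FX)^{f}$ representing the derived unit, and by hypothesis $\psi$ is a weak equivalence. Dually, for fibrant $Y \in \mathcal{C'}$, choose a cofibrant replacement $(GY)^{c} \to GY$ in $\mathcal{M}$; by stability $(GY)^{c} \in \mathcal{C}$, and the adjoint of the weak equivalence $\psi \colon (GY)^{c} \to GY$ is the map $\phi \colon F(GY)^{c} \to Y$ representing the derived counit, which is a weak equivalence by hypothesis. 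Together these show that $\L F$ and $\R G$ are mutually inverse equivalences of the homotopy categories.

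For the enrichment over $\ho \spaces$, I would invoke the fact that the topological Quillen adjunction $(F,G)$ supplies, for cofibrant $X$ and fibrant $Y$, a natural weak equivalence of mapping spaces
\[
\Map_{\mathcal{M'}}(FX, Y) \htp \Map_{\mathcal{M}}(X, GY),
\]
which models the derived-mapping-space equivalence induced by the adjunction. Restricting to $X \in \mathcal{C}$ and $Y \in \mathcal{C'}$ is automatic since $\mathcal{C}$ and $\mathcal{C'}$ are full subcategories, and this produces precisely the $\ho \spaces$-enrichment required. I do not anticipate any real obstacle here: the lemma is essentially a bookkeeping exercise, in which every ingredient of the usual Quillen equivalence theorem survives passage to a full subcategory. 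The only point requiring attention is closure under cofibrant and fibrant replacements, and this is immediate from stability under weak equivalence.
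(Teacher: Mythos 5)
The paper offers no proof of this lemma, remarking only that it is ``easily checked'' and implicit in the Mandell--May--Schwede--Shipley reference, so there is no proof in the source to compare against. Your argument is correct and supplies exactly the expected verification: stability under weak equivalence ensures (co)fibrant replacements stay inside $\mathcal{C}$ and $\mathcal{C'}$, the forward direction of the hypothesis (applied to the fibrant replacement $FX \to (FX)^f$) shows the derived unit is an equivalence, the backward direction (applied to the cofibrant replacement $(GY)^c \to GY$) shows the derived counit is an equivalence, and the enrichment is inherited from the enriched Quillen adjunction in the usual way.
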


The key result in our setting is the following classical proposition;
we recall the argument from \cite{May:gils,MR0339152}.

\begin{Proposition}\label{t-pr-unit-we-May}
Let $C$ be a unital $\einfty$ operad, equipped with a map of monads
\[
   f\colon C \to \linf \sinf.
\]
Suppose that the basepoint $\ptspace \to C (1)$ is non-degenerate, and
that each $C (n)$ has the homotopy type of a
$\Sigma_{n}$-$CW$-complex.
If $X$ is a cofibrant $C$-space, then the unit
of the adjunction
\[
    X \to \Omega^{f}\Sigma^{f} X
\]
is group completion, and so a weak equivalence if $X$ is group-like.
\end{Proposition}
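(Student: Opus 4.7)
The plan is to reduce to May's classical approximation theorem on free $C$-algebras and then propagate the result to all cofibrant $C$-spaces via a resolution by free $C$-algebras. For a free $C$-space $X = \ptC Y$ with $Y$ a nondegenerately based space of the homotopy type of a CW-complex, Lemma \ref{t-le-Sigma-f-C-sinf} identifies $\Sigma^f \ptC Y \cong \sinf Y$, so the unit of the adjunction $(\Sigma^f, \Omega^f)$ becomes the map
\[
\ptC Y \xrightarrow{\;f\;} \linf \sinf Y = QY.
\]
This is precisely May's approximation theorem from \cite{May:gils}, whose conclusion is that this map is a group completion.

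For a general cofibrant $C$-space $X$, I would resolve $X$ by the monadic bar construction $B_\bullet(\ptC,\ptC, X)$ with $k$-simplices $\ptC^{k+1} X$. The augmentation to $X$ admits an extra degeneracy, so after forgetting the $C$-structure its realization is a homotopy equivalence. Under our standing hypotheses on $C$, Proposition \ref{t-pr-cof-c-alg-cof-ptd-space} supplies nondegenerate basepoints throughout, while Proposition \ref{t-pr-cx-cw} ensures that $\ptC^k X$ has the homotopy type of a CW-complex at every level; together these guarantee that the simplicial $C$-space $\ptC^{\bullet+1}X$ is proper.

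Next, I would apply the Quillen left adjoint $\Sigma^f$, which is continuous and hence commutes with geometric realization, and which by Lemma \ref{t-le-Sigma-f-C-sinf} takes $\ptC^{k+1} X$ to $\sinf \ptC^k X$. Under the CW/properness controls just secured, $\sinf \ptC^\bullet X$ is a proper simplicial object of tame spectra, so by \cite[X.2.4]{EKMM} the functor $\linf$ commutes with its realization. This identifies $\Omega^f \Sigma^f X$ with the realization of the simplicial space $Q \ptC^\bullet X$, and the unit $X \to \Omega^f \Sigma^f X$ with the realization of the map of simplicial spaces induced levelwise by $f\colon \ptC \to Q$ applied to $\ptC^k X$.

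The free case now tells us this map is a levelwise group completion, and the final step is to invoke the group completion theorem (of McDuff--Segal--Quillen, in the form applicable to bar constructions over an $\einfty$-monoid) to conclude that the realization is itself a group completion; in particular, when $X$ is group-like the unit is a weak equivalence. I expect the main obstacle to be the bookkeeping needed to certify the homotopical hygiene of the intermediate constructions --- properness of the simplicial objects, tameness of the simplicial spectrum, and the passage of $\linf$ through realization --- rather than any conceptual novelty. This is precisely what the hypotheses on $C$ and the cofibrancy of $X$ are designed to provide, and it is where the work of the proof lives.
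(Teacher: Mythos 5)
Your strategy mirrors the paper's: resolve a cofibrant $X$ by the monadic bar construction $B_\bullet(\ptC,\ptC,X)$, reduce to the free case via $\Sigma^f \ptC \cong \sinf$, apply May's approximation theorem levelwise, and realize. The paper uses exactly this resolution and the same commuting square as you describe. However, there is a genuine gap in your justification of the key step.

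You assert that ``$\sinf \ptC^\bullet X$ is a proper simplicial object of tame spectra, so by \cite[X.2.4]{EKMM} the functor $\linf$ commutes with its realization.'' This is a misattribution. Theorem X.2.4 of \cite{EKMM} says that geometric realization carries a levelwise weak equivalence of proper simplicial tame spectra to a weak equivalence; it says nothing about $\Omega^\infty$ commuting with realization. Indeed, $\Omega^\infty$ is a right adjoint and has no reason to commute with the colimit computing a geometric realization. The interaction of $\Omega^f$ with realization is precisely the hard technical content of the proof: what is needed is May's Theorem 12.3 of \cite{May:gils}, which says that for suitable simplicial pointed spaces $Y_\bullet$ the natural map $|\Omega Y_\bullet| \to \Omega|Y_\bullet|$ is a weak equivalence, together with the passage to infinite loops explained in \cite[\S 8]{May:rant}. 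The paper explicitly flags this as ``the hardest thing in \cite{May:gils},'' so it cannot be absorbed into tameness/properness bookkeeping. A secondary point your write-up elides: after getting $|\Omega^f\Sigma^f B_\bullet| \to \Omega^f\Sigma^f|B_\bullet|$ to be a weak equivalence, one still needs $\Omega^f\Sigma^f|B_\bullet| \to \Omega^f\Sigma^f X$ to be a weak equivalence; this requires the paper's Lemma \ref{t-le-BCC-cof} (that $B(\ptC,\ptC,X)$ has the homotopy type of a cofibrant $C$-space) together with the observation that $\Sigma^f$ preserves weak equivalences between $C$-spaces of cofibrant homotopy type, since $\Sigma^f$ need not preserve arbitrary homotopy equivalences of spaces.
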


The proof of the proposition follows from analysis of the following
commutative diagram of simplicial $C$-spaces:

\begin{equation} \label{eq:14}
\xymatrix{
 B_{\bullet} (\ptC ,\ptC ,X) \ar[r] \ar[d] & \ar[d] \Omega^{f}
 \Sigma^{f} B_{\bullet} (\ptC ,\ptC ,X) \\
 X \ar[r] & \Omega^{f} \Sigma^{f}X.
}
\end{equation}

Specifically, we will show that under the hypotheses, on passage to
realization the vertical maps are weak equivalences and the top
horizontal map is group completion.

We begin by studying the left-hand vertical map; the usual simplicial
contraction argument shows the underlying map of spaces is a homotopy
equivalence, and so on passage to realizations we have a weak
equivalence of $C$-spaces.

\begin{Lemma}\label{t-le-BCCX-to-X}
For any operad $C$ and any $C$-space $X$, the left vertical arrow is a
map of simplicial $C$-spaces and a homotopy equivalence of simplicial
spaces, and so induces a weak
equivalence of $C$-spaces
\[
  B (\ptC,\ptC,X) \to X
\]
upon geometric realization.
\end{Lemma}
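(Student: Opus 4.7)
The plan is to exhibit the augmentation $B_\bullet(\ptC,\ptC,X) \to X$ as a split augmented simplicial object in the underlying category of spaces, then invoke the standard fact that split augmentations yield simplicial homotopy equivalences, and finally pass to realization.

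First, I would verify that the augmentation is a map of simplicial $C$-spaces. At simplicial level $k$, $B_k(\ptC,\ptC,X) = \ptC^{k+1} X$, which is a $C$-space via the leftmost copy of $\ptC$ and monadic multiplication. The augmentation at level $k$ is obtained by iterating the action map $\ptC X \to X$; this is the counit of the free/forgetful adjunction, hence a map of $C$-algebras at each level. Compatibility with faces and degeneracies is formal from the monad axioms.

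Next, I would construct the extra degeneracy. The unit $\eta\colon \id \to \ptC$ of the monad gives a natural map $s_{-1}\colon \ptC^{k+1}X \to \ptC^{k+2}X$ by applying $\eta$ in the rightmost slot. Together with $\eta\colon X \to \ptC X$ augmenting $X$, these maps satisfy the standard simplicial identities making the augmented simplicial space split. (This $s_{-1}$ is not a map of $C$-spaces, but that does not matter for the argument; we only need it as a map of underlying spaces.) The standard chain of simplicial homotopies then gives a simplicial homotopy equivalence of underlying simplicial spaces between $B_\bullet(\ptC,\ptC,X)$ and the constant simplicial space on $X$.

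Finally, a simplicial homotopy equivalence realizes to an honest homotopy equivalence of spaces (the simplicial homotopies realize to topological homotopies via the standard interval $|\Delta^1|$ argument). Since the forgetful functor $\alg{C}{\spaces} \to \spaces$ preserves geometric realization (the realization in $\alg{C}{\spaces}$ is computed on underlying spaces) and creates weak equivalences by Proposition~\ref{t-pr-C-spaces-level-model}, the resulting homotopy equivalence of underlying spaces $|B_\bullet(\ptC,\ptC,X)| \to X$ is a weak equivalence of $C$-spaces. No hypotheses on $C$ or properness are needed at this stage because the extra degeneracy argument produces a genuine simplicial homotopy equivalence, which is preserved by \emph{any} realization functor; the subtleties involving good/proper simplicial spaces will only enter in the next lemma where one compares different simplicial $C$-spaces via a levelwise weak equivalence rather than a simplicial homotopy equivalence. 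The main (modest) point to be careful about is the bookkeeping of which structure maps are $C$-algebra maps versus only maps of underlying spaces.
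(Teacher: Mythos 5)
Your overall strategy is exactly the one the paper has in mind when it invokes ``the usual simplicial contraction argument'': exhibit the augmentation as a split augmented simplicial object in the underlying category, deduce a simplicial homotopy equivalence, and realize. The remarks about the augmentation being a levelwise map of $C$-algebras, about realization of simplicial $C$-spaces being computed on underlying spaces, and about the absence of any properness hypothesis for this particular step are all correct and on point.

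There is, however, a slip in the description of the extra degeneracy. For $B_\bullet(\ptC,\ptC,X)$ with $X$ a $C$-algebra, the extra degeneracy that works is the \emph{front} one, $\eta_{\ptC^{k+1}X}\colon \ptC^{k+1}X \to \ptC^{k+2}X$, which applies the unit to the entire expression and inserts a new $\ptC$ on the far left; it pairs with the face $d_0 = \mu$ via the monad unit axiom $\mu \circ \eta\ptC = \id$. Applying $\eta$ at the rightmost slot, i.e.\ $\ptC^{k+1}\eta_X$, does \emph{not} satisfy the required simplicial identities: one has $d_k\,(\ptC^{k+1}\eta_X) = \ptC^k(\mu_X \circ \ptC\eta_X) = \id$, whereas the identity $d_k s = s\, d_k$ would force $\ptC^k(\eta_X\circ\xi_X)$, which is not the identity. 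Note that your own parenthetical — that $s_{-1}$ is not a map of $C$-spaces — is correct for the front version (the unit of a monad applied to an algebra is not an algebra map) but would be false for the rightmost version (which is $\ptC$ applied to a map, hence a map of free $C$-algebras). So the parenthetical is already pointing you to the correct choice; only the phrase ``rightmost slot'' needs to be replaced by ``leftmost.'' With that fix the argument goes through as you describe.
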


The right vertical map is more difficult to analyze, because we do not
know that $\Sigma^{f}$ preserves homotopy equivalences of spaces.  May
\cite[12.3]{May:gils} shows that, for suitable simplicial pointed spaces
$Y_{\bullet}$, the natural map
\begin{equation} \label{eq:73}
         |\Omega Y_{\bullet}| \to \Omega|Y_{\bullet}|
\end{equation}
is a weak equivalence, and he explains in \cite[\S8]{May:rant} how
this weak equivalence gives rise to a weak equivalence of $C$-spaces
\[
        |\Omega^{f} \Sigma^{f} B_{\bullet} (\ptC,\ptC,X)|\to
	\Omega^{f}|\Sigma^{f}B_{\bullet} (\ptC,\ptC,X)| \iso
	\Omega^{f}\Sigma^{f}B (\ptC,\ptC,X)
\]
by passage to colimits.   (We note that in \cite{May:rant}, May
describes proving that \eqref{eq:73} is a weak equivalence as the
hardest thing in \cite{May:gils}.)  Therefore, to show that the map
\[
|\Omega^{f} \Sigma^{f} B_{\bullet} (\ptC,\ptC,X)| \to
 \Omega^{f}\Sigma^{f} X
\]
is a weak equivalence, it suffices to show that for cofibrant $X$, the
map 
\[
\Sigma^f B(\ptC, \ptC, X) \to \Sigma^{f} X
\]
is a weak equivalence.
As it is straightforward to check from the definition that
$\Sigma^{f}$ does preserve weak equivalences between $C$-spaces with
the homotopy type of cofibrant $C$-spaces, the desired result will
follow once we show that $B(\ptC, \ptC, X)$ has the homotopy type of
a cofibrant $C$-space if $X$ is cofibrant.

\begin{Lemma} \label{t-le-BCC-cof}
Suppose that $C$ is a unital operad, such that the base point $* \to
C(1)$ is non-degenerate and each $C(n)$ has the homotopy type of a
$\Sigma_{n}$-$CW$-complex.  Let $X$ be a cofibrant $C$-space.  Then $B
(\ptC,\ptC,X)$ has the homotopy type of a cofibrant $C$-space.
\end{Lemma}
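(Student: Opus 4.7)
The plan is to exhibit $B(\ptC, \ptC, X)$ as the geometric realization of a simplicial $C$-space $B_\bullet$ with $k$-simplices $B_k = (\ptC)^{k+1} X$, and to show: (i) each $B_k$ has the homotopy type of a cofibrant $C$-space, and (ii) the simplicial object $B_\bullet$ is proper (i.e., the degeneracies are Hurewicz cofibrations of underlying spaces). Once both hold, a standard argument (analogous to the proper-simplicial-object theory in \S{VII.3} and \S{X.2} of \cite{EKMM}, or Reedy cofibrancy in the topological model structure on $\alg{\ptC}{\ptdspaces}$ given by Proposition~\ref{t-pr-C-spaces-level-model}) will imply that $|B_\bullet|$ has the homotopy type of a cofibrant $C$-space.

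For (i), observe that $B_k = \ptC\bigl((\ptC)^k X\bigr)$ is a \emph{free} $\ptC$-algebra on a pointed space. By Proposition~\ref{t-pr-cx-cw}, since $X$ is cofibrant and each $C(n)$ has the homotopy type of a $\Sigma_n$-CW-complex, the underlying space of $X$ has the homotopy type of a CW-complex. An easy induction on $k$ then shows that the underlying pointed space of $(\ptC)^k X$ also has the homotopy type of a pointed CW-complex, because $\ptC$ preserves homotopies and carries spaces of the homotopy type of CW-complexes to spaces of the homotopy type of CW-complexes under our hypotheses on the $C(n)$. Applying the free functor $\ptC$ one more time, $B_k$ has the homotopy type of a cofibrant (free) $C$-space.

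For (ii), the degeneracies $s_j\colon B_{k-1}\to B_k$ are induced from the unit $\eta\colon\mathrm{id}\to \ptC$, which factors through the inclusion of the basepoint $* \to C(1)$. The hypothesis that $* \to C(1)$ is non-degenerate, combined with the standard argument used in the proof of Proposition~\ref{t-pr-cof-c-alg-cof-ptd-space} (i.e., the Cofibration Hypothesis of \S{VII} of \cite{EKMM}), shows that applying $\ptC$ to an NDR inclusion yields an unbased Hurewicz cofibration. Hence the degeneracies in $B_\bullet$ are h-cofibrations of underlying spaces, so $B_\bullet$ is proper; in fact the simplicial $C$-space is Reedy cofibrant in the sense appropriate to our topological model structure, because the latching maps are built by attaching free $C$-cells along h-cofibrations.

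The hard part will be step (ii): carefully verifying that the latching maps (not just the individual degeneracies) are cofibrations in $\alg{C}{\spaces}$, using only the non-degeneracy of the basepoints of $C(1)$ and the CW hypothesis on the higher $C(n)$. This requires the space-level analogue of the bar-construction analysis of \cite[\S{VII}]{EKMM} and Lemma~15.9 of \cite{MR1806878}, decomposing the latching object as a free $C$-algebra on a pointed subspace, and then showing that the inclusion of this subspace is an h-cofibration. Once this is established, realization of the Reedy cofibrant simplicial $C$-space yields an object of the homotopy type of a cofibrant $C$-space, completing the proof.
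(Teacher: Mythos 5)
Your proposal follows the same three-step architecture as the paper's proof: show the levels $B_k=\ptC^{k+1}X$ have the homotopy type of cofibrant $C$-spaces, show the simplicial object is proper, and conclude via a realization theorem (the space-level analogue of Theorem~X.2.7 of \cite{EKMM}). For step (i) you take a small detour: the paper iterates Proposition~\ref{t-pr-cx-cw} directly (``cofibrant homotopy type is preserved by $C$''), whereas you first extract the CW statement and then re-apply $\ptC$; both work and land in the same place.

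The real divergence is step (ii). The paper does not re-derive properness: it invokes Proposition~\ref{t-pr-cof-c-alg-cof-ptd-space} to conclude that the cofibrant $C$-space $X$ has a nondegenerate basepoint, and then relies on May's classical result (implicit here, made explicit in the proof of Lemma~\ref{t-le-gp-completion}) that the two-sided bar construction $B_\bullet(\ptC,\ptC,X)$ is proper whenever $(C(1),\ast)$ and $(X,\ast)$ are NDR pairs. You instead attempt to argue properness from the unit of the monad and the Cofibration Hypothesis directly, and you correctly flag this as the hard part. Two cautions about that route. First, ``each degeneracy is an h-cofibration'' by itself only gives Segal-goodness; properness is the stronger latching-map condition, and you need the Lillig/Lewis-type argument (or May's) to promote one to the other. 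Second, you drift between ``proper'' (latching maps are Hurewicz cofibrations of underlying spaces) and ``Reedy cofibrant in $\alg{\ptC}{\ptdspaces}$'' (latching maps are $q$-cofibrations of $C$-spaces); these are different, attaching free $C$-cells along h-cofibrations yields the former but not the latter, and the realization theorem you want is the one requiring only properness. Citing Proposition~\ref{t-pr-cof-c-alg-cof-ptd-space} and May's bar-construction properness result, as the paper does, sidesteps both issues and shortens the argument considerably.
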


\begin{proof}
With our hypotheses, it follows from Proposition \ref{t-pr-cx-cw} that
the spaces $\ptC^{n}X$ have the homotopy type of cofibrant
$C$-spaces.  By Proposition \ref{t-pr-cof-c-alg-cof-ptd-space}, the
simplicial space $B_{\bullet} (\ptC,\ptC,X)$ is proper.  Finally, we
apply an argument analogous to that of Theorem X.2.7 of \cite{EKMM} to
show that if $Y_{\bullet}$ is a proper $C$-space in which each level
has the homotopy type of a cofibrant $C$-space, then $|Y_{\bullet}|$
has the the homotopy type of a cofibrant $C$-space.
\end{proof}

%\begin{Corollary}\label{t-co-sf-BCCX}
%Suppose that $C$ is a unital operad, such that the base point $\pt\to C (1)$ is
%non-degenerate and each $C (n)$ has the homotopy type of a
%$\Sigma_{n}$-$CW$-complex.  Suppose that $X$ is a
%cofibrant $C$-space.   Then the map
%\[
%   \Sigma^{f}B_{\bullet} (\ptC,\ptC,X) \to \Sigma^{f}X
%\]
%gives rise to a weak equivalence
%\[
%    \Sigma^{f} B (\ptC,\ptC,X) \to \Sigma^{f}X
%\]
%upon geometric realization.
%\end{Corollary}

%\begin{proof}
%We have
%\[
%    |\Sigma^{f}B_{\bullet} (\ptC,\ptC,X)| \iso \Sigma^{f}|B_{\bullet}
%    (\ptC,\ptC,X)|.
%\]
%By the preceding proposition,
%\[
%   B(\ptC,\ptC,X) \to X
%\]
%is a weak equivalence of $C$-spaces of the homotopy type of cofibrant
%$C$-spaces, and $\Sigma^{f}$ preserve weak equivalences between these.
%\end{proof}

%With our hypotheses on $C$, it then follows that if $X$ is cofibrant,
%then
%\[
%      \Omega^{f} \Sigma^{f} B (\ptC,\ptC,X) \to \Omega^{f}\Sigma^{f}X
%\]
%is a weak equivalence.

%\begin{Lemma}  \label{t-le-gr-omsib-to-omsix}
%Suppose that $C$ is an operad satisfying the hypotheses of Proposition
%\ref{t-pr-unit-we-May}, and suppose that $X$ is a cofibrant
%$C$-space.  Then the natural map of $C$-spaces
%\[
%  |\Omega^{f} \Sigma^{f} B_{\bullet} (\ptC,\ptC,X)| \to
%   \Omega^{f}\Sigma^{f} X
%\]
%is a weak equivalence.
%\end{Lemma}

Finally, we consider the top horizontal map in \eqref{eq:14}.   We have
\emph{isomorphisms} of simplicial $C$-spaces
\[
      \Omega^{f}\Sigma^{f}B_{\bullet} (\ptC,\ptC,X) \iso B_{\bullet}
(\Omega^{f}\Sigma^{f}\ptC,\ptC,X) \iso B_{\bullet}
(\Omega^{f}\sinf,\ptC,X) \iso B_{\bullet} (Q,\ptC,X)
\]
(we used the isomorphism $\Sigma^{f}\ptC\iso \sinf$ of Lemma
\ref{t-le-Sigma-f-C-sinf}), and so an isomorphism of $C$-spaces
\[
      B (Q,\ptC,X) \iso |\Omega^{f}\Sigma^{f} B_{\bullet} (\ptC ,\ptC ,X)|
\]
We then apply the following result from \cite[\S 2]{MR0339152}.

\begin{Lemma} \label{t-le-gp-completion}
Let $C$ be a unital $\einfty$ operad, equipped with a
map of monads
\[
  f\colon   \ptC  \to \linf \sinf.
\]
Let $X$ be a $C$-space (and so pointed via $C (0)\to X$).   Suppose that
the base point of $C (1)$ and the base point of $X$ are
non-degenerate.  Then the map
\[
    B (\ptC,\ptC,X) \to  B (Q,\ptC,X),
\]
and so
\[
   B (\ptC,\ptC,X) \to |\Omega^{f}\Sigma^{f} B_{\bullet} (\ptC ,\ptC ,X)|,
\]
is group-completion.
\end{Lemma}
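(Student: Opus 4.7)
The plan is to identify $B(Q,\ptC,X)$ as a model for the unit map $X \to \linf\sinf X$ of the classical $(\sinf,\linf)$-adjunction, and then invoke the classical group completion theorem.

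First I would exploit the isomorphism $\Sigma^{f}\ptC \iso \sinf$ from Lemma~\ref{t-le-Sigma-f-C-sinf}. Applied levelwise to the bar construction, this gives a natural isomorphism of simplicial $C$-spaces
\[
B_{\bullet}(Q,\ptC,X) \iso \Omega^{f}\Sigma^{f} B_{\bullet}(\ptC,\ptC,X),
\]
since at each level $Q\ptC^{k}X \iso \Omega^{f}\sinf \ptC^{k}X$. The argument already sketched for the right-hand vertical map of diagram~\eqref{eq:14}---using the interchange~\eqref{eq:73} of $\linf$ with geometric realization, together with Lemma~\ref{t-le-BCC-cof} to ensure that $\Sigma^{f}$ preserves the relevant weak equivalences on realization---then identifies the map in the statement, up to a natural zigzag of weak equivalences of $C$-spaces, with the adjunction unit $X \to \Omega^{f}\Sigma^{f} X$.

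Second I would invoke the group completion theorem. Because $C$ is $\einfty$ and $X$ is a $C$-algebra, the realization $B(\ptC,\ptC,X)\heq X$ is a homotopy commutative $H$-space with $\pi_{0}$ the monoid $\pi_{0}X$, while $\Omega^{f}\Sigma^{f} X$ is an infinite loop space. The classical group completion theorem of Quillen--May--McDuff--Segal asserts that under these hypotheses the unit induces an isomorphism of Pontryagin rings
\[
H_{*}(X;R)[\pi_{0}(X)^{-1}] \xra{\iso} H_{*}(\Omega^{f}\Sigma^{f} X;R)
\]
for every commutative ring $R$, which is the definition of group completion.

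The principal technical obstacle is the homological statement itself. One route, due to May, computes $H_{*}(QX;R)$ explicitly as the free Pontryagin ring on $H_{*}(X;R)$ generated under the Dyer--Lashof operations and then localized at $\pi_{0}(X)$, using the filtration of $QX$ by $C(k)_{+}\wedge_{\Sigma_{k}} X^{\wedge k}$ and the hypothesis that each $C(k)$ has the homotopy type of a $\Sigma_{k}$-$CW$-complex. An alternative is the McDuff--Segal telescope argument, which stabilizes $X$ along multiplication by a generating set of $\pi_{0}(X)$ and identifies the homotopy colimit with $\Omega BX$. Either approach requires careful propagation of the basepoint non-degeneracy and properness hypotheses through the two-sided bar construction, so that the operadic identifications of the first two paragraphs are homotopically valid; this geometric bookkeeping, rather than the operadic formalism, is where the proof spends most of its effort.
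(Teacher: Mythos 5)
Your proposal inverts the logical structure of the paper's argument and, as written, is circular. The lemma you are asked to prove is itself the key step in establishing Proposition~\ref{t-pr-unit-we-May}, which asserts that the unit $X\to\Omega^f\Sigma^f X$ is group completion for an arbitrary (cofibrant) $C$-space $X$. Your plan is to first identify the map $B(\ptC,\ptC,X)\to B(Q,\ptC,X)$ with that unit (correct, and this identification is indeed set up in the text immediately preceding the lemma) and then to ``invoke the classical group completion theorem'' for the unit. But there is no classical theorem asserting that $X\to\Omega^f\Sigma^fX$ is group completion for an arbitrary $C$-algebra $X$ independently of this circle of ideas; the routes you propose do not fill the gap. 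May's computation of $H_*(QY)$ as a localized free Dyer--Lashof algebra establishes that $\ptC Y\to QY$ is group completion for a \emph{free} $C$-space on a pointed space $Y$, not that $X\to\Omega^f\Sigma^fX$ is group completion for an arbitrary $C$-algebra $X$. The McDuff--Segal telescope argument shows $M\to\Omega BM$ is group completion for a topological monoid $M$; relating $\Omega BM$ to $\Omega^f\Sigma^fX$ for a $C$-space would itself require a recognition-theorem-type argument that is no easier than the statement at hand.

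The paper's actual proof is shorter and goes the other way round: it takes as its classical input precisely the \emph{free} case (that $\ptC Y\to\linf\sinf Y$ is group completion for any suitably nice pointed $Y$, from Cohen, Cohen--Lada--May, McDuff--Segal), applies it levelwise to the bar resolution to obtain levelwise group completions $\ptC(\ptC)^nX\to Q(\ptC)^nX$, and then passes to geometric realization using the homology spectral sequence of a proper simplicial space. The nondegeneracy hypotheses on $C(1)$ and $X$ are used exactly here: they guarantee the simplicial spaces are proper so that the spectral sequences have the expected $E_2$-terms and the levelwise localization isomorphism in homology persists to the realization. You correctly anticipate that properness is the delicate point, but you locate it in the wrong place (homotopical validity of the operadic identifications rather than convergence of the homology spectral sequence), and you omit the spectral sequence argument entirely, which is the step that actually converts the levelwise free-case input into the conclusion for general $X$.
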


\begin{proof}
The point is that in general
\[
    \ptC Y \to \linf \sinf Y
\]
is group-completion \cite{MR0339176,MR0436146,MR0402733}, and so we
have the level-wise group completion
\[
     \ptC (\ptC )^{n}X \to \linf \sinf (\ptC)^{n}X
\]
(see~\cite[2.2]{MR0339152}).

The argument requires the simplicial spaces involved to be ``proper,''
that is, Reedy cofibrant with respect to the Hurewicz/Str\o{}m model
structure on topological spaces, so that the homology spectral
sequences have the expected $E_{2}$-term.  May proves that they are,
provided that $(C (1),\ptspace)$ and $(X,\ptspace)$ are NDR-pairs.
\end{proof}

We can now finish the proof of Theorem~\ref{t-th-einfty-spaces-spectra}.

\begin{proof}
It remains to show that if $X$ is a group-like cofibrant $C$-algebra
and $V$ is a (fibrant) $(-1)$-connected spectrum, then a map
\[
      \phi\colon \Sigma^{f} X \to V
\]
is a weak equivalence if and only if its adjoint
\[
       \psi\colon X \to \Omega^{f} V
\]
is.  These two maps are related by the factorization
\[
    \psi\colon X \to \Omega^{f} \Sigma^{f} X \xra{\Omega^{f}\phi}
\Omega^{f} V.
\]
The unit of adjunction is a weak equivalence by Proposition
\ref{t-pr-unit-we-May}.  It follows that $\psi$ is a weak
equivalence if and only if $\Omega^{f}\phi$ is.  Certainly if $\phi$
is a weak equivalence, then so is $\Omega^{f}\phi.$ Since both
$\Sigma^{f}X$ and $V$ are $(-1)$-connected, if $\Omega^{f}\phi$ is a
weak equivalence, then so is $\phi.$
\end{proof}

\begin{Remark}
There is another perspective on
Theorem~\ref{t-th-einfty-spaces-spectra} which elucidates the role of
the ``group-like'' condition on $C$-spaces.  Define a map
\[
    \alpha\colon X\to Y
\]
of $C$-spaces to be a \emph{stable} equivalence if the induced map
\[
    \Sigma^{f}\alpha' \colon \Sigma^{f} X' \to \Sigma^{f}Y'
\]
is a weak equivalence, where $X'$ and $Y'$ are cofibrant replacements
of $X$ and $Y$.  The ``stable'' model structure on $C$-spaces is the
localization of the model structure we have been considering in which
the weak equivalences are the stable equivalences, and the
cofibrations are as before.

In this stable model structure a $C$-space is fibrant if and only if
it is group-like; compare the model structure on $\Gamma$-spaces
discussed in \cite{MR1670249} and~\cite[\S 18]{MR1806878}.  The
homotopy category associated with the stable model structure is
exactly $\ho \gplike{C}{\spaces}$, and so this is a better encoding of
the homotopy theory of $\aC$-spaces.
\end{Remark}

\subsection{Proof of Theorem \ref{t-th-units}}
\label{sec:units:-proof-theorem}

Let $C$ be unital $\einfty$ operad, equipped with a map of operads
\[
         C \to \Lin,
\]
a map of monads on pointed spaces
\[
         f\colon \ptC \to \linf \sinf,
\]
and satisfying the hypotheses of Theorem
\ref{t-th-einfty-spaces-spectra}.  For example, we can take $C$ to be
\[
     C = | \SingTxt  (\mathcal{C} \times \Lin)|,
\]
the geometric realization of the singular complex on the product
operad $\mathcal{C}\times \Lin$, where $\mathcal{C}$ is infinite
little cubes operad of Boardman and Vogt~\cite{MR0420609}.

Then we have a sequence of continuous adjunctions (the left adjoints
are listed on top, and connective Quillen equivalence is indicated by
$\Qeq$).
\[
\xymatrix{ {\splus \linf\colon \CatOf{$(-1)$-connected spectra}}
\ar@<-.3ex>[r]_-{\Omega^{f}} & {\gplike{C}{\spaces}}
\ar@<-.3ex>[l]_-{\Sigma^{f},\Qeq} \ar@<.3ex>[r] & {\alg{C}{\spaces}}
\ar@<.3ex>[l]^-{\GLsym} \ar@<.3ex>[r]^-{\splus} & {\alg{C}{\spectra}\colon
\glsym} \ar@<.3ex>[l]^-{\linf} }
\]
By Proposition \ref{t-pr-coeo-lms-spectra}, $\alg{C}{\spectra}$ is a
model for the category of $\einfty$ spectra.  This completes the proof
of Theorem \ref{t-th-units}.

\def\cprime{$'$}

\end{document}